\documentclass[reqno, a4paper]{amsart}
\usepackage[english]{babel}

\parskip=3pt
\setcounter{tocdepth}{1}

\usepackage[vcentermath]{youngtab}
\usepackage{ytableau}

\usepackage{graphicx,mathrsfs,tikz,latexsym,ifthen,amsmath,amsfonts,amssymb,amsthm,stmaryrd,fancyhdr,amscd,amsbsy,amstext}
\usepackage{enumerate,enumitem,empheq,mathtools,mathpazo,verbatim,a4wide}
\mathtoolsset{showonlyrefs,showmanualtags}

\usepackage{epigraph}

\usepackage[utf8x]{inputenc}

\usepackage[toc,page]{appendix}
\usepackage[mathscr]{euscript} 
\allowdisplaybreaks

\usetikzlibrary{positioning,shapes,shadows,arrows}

\usepackage{color}
\definecolor{MyDarkBlue}{rgb}{0.15,0.25,0.45}

\usepackage{hyperref}
\hypersetup{anchorcolor=black, linktocpage=true,
colorlinks=true,
citecolor=red,
linkcolor=MyDarkBlue,
urlcolor=black,
pdfauthor={Francesco Sala and Olivier Schiffmann},
pdftitle={Cohomological Hall algebra of Higgs sheaves on a curve},
breaklinks=true,
plainpages=true
}

\usepackage[hyperpageref]{backref} 


\newcommand{\C}{\mathbb{C}}
\newcommand{\G}{\mathbb{G}}
\newcommand{\HH}{\mathbb{H}}

\newcommand{\R}{\mathbb{R}}
\newcommand{\N}{\mathbb{N}}
\newcommand{\PP}{\mathbb{P}}

\newcommand{\Q}{\mathbb{Q}}
\newcommand{\VV}{\mathbb{V}}
\newcommand{\Z}{\mathbb{Z}}

\newcommand{\Xscr}{\mathscr{X}}

\newcommand{\Uscr}{\mathscr{U}}
\newcommand{\Wscr}{\mathscr{W}}
\newcommand{\Zscr}{\mathscr{Z}}

\newcommand{\gfrak}{\mathfrak{g}}

\newcommand{\Ecal}{\mathcal{E}}
\newcommand{\Fcal}{\mathcal{F}}
\newcommand{\Gcal}{\mathcal{G}}
\newcommand{\Hcal}{\mathcal{H}}

\newcommand{\Lcal}{\mathcal{L}}

\newcommand{\Ocal}{\mathcal{O}}

\newcommand{\Tcal}{\mathcal{T}}

\newcommand{\Efrak}{\mathfrak{E}}
\newcommand{\Kfrak}{\mathfrak{K}}

\newcommand{\Hom}{\mathsf{Hom}}
\newcommand{\Ext}{\mathsf{Ext}}

\newcommand{\pt}{\mathsf{pt}}
\newcommand{\Frac}{\mathsf{Frac}}

\newcommand{\Ksf}{\mathsf{K}}
\newcommand{\Ksfnum}{\mathsf{K}^{\mathsf{num}}}
\newcommand{\Ksfnumplus}{\Ksf^{\mathsf{num},+}}
\newcommand{\Coh}{\mathsf{Coh}}
\newcommand{\CohL}{\Coh^{> \Lcal}}
\newcommand{\CohLp}{\Coh^{> \Lcal'}}
\newcommand{\Higgs}{\mathsf{Higgs}}
\newcommand{\Spec}{\mathsf{Spec}}
\newcommand{\Sym}{\mathsf{Sym}}

\newcommand{\bCoh}{\mathbf{Coh}}
\newcommand{\bCoha}{\bCoh_\alpha}
\newcommand{\bCohaL}{\bCoh_\alpha^{> \Lcal}}
\newcommand{\bCohaLp}{\bCoh_\alpha^{> \Lcal'}}
\newcommand{\bCohb}{\bCoh_\beta}
\newcommand{\bCohbL}{\bCoh_\beta^{> \Lcal}}

\newcommand{\bCohab}{\bCoh_{\alpha+\beta}}

\newcommand{\bCohabLp}{\bCoh_{\alpha+\beta}^{> \Lcal'}}
\newcommand{\bBun}{\mathbf{Bun}}
\newcommand{\bBuna}{\bBun_\alpha}

\newcommand{\bCohtildeab}{\widetilde{\bCoh_{\alpha, \beta}}}
\newcommand{\bCohtildeabLLp}{\widetilde{\bCoh_{\alpha, \beta}}^{> \Lcal, > \Lcal'}}
\newcommand{\bCohtildeabLoneLp}{\widetilde{\bCoh_{\alpha, \beta}}^{> \Lcal_{1}, > \Lcal'}}
\newcommand{\bCohtildeabLtwoLp}{\widetilde{\bCoh_{\alpha, \beta}}^{> \Lcal_{2}, > \Lcal'}}
\newcommand{\bCohtildeabLLone}{\widetilde{\bCoh_{\alpha, \beta}}^{> \Lcal, > \Lcal_1}}
\newcommand{\bCohtildeabLLtwo}{\widetilde{\bCoh_{\alpha, \beta}}^{> \Lcal, > \Lcal_2}}

\newcommand{\LCoha}{\mathbb{L}_{\bCoha}}
\newcommand{\TCoha}{\mathbb{T}_{\bCoha}}

\newcommand{\TCohaL}{\mathbb{T}_{\bCohaL}}

\newcommand{\bHiggs}{\mathbf{Higgs}}
\newcommand{\bHiggsa}{\bHiggs_\alpha}
\newcommand{\bHiggsb}{\bHiggs_\beta}

\newcommand{\bHiggsab}{\bHiggs_{\alpha+\beta}}

\newcommand{\bHiggsaL}{\bHiggs_\alpha^{> \Lcal}}
\newcommand{\bHiggsaLp}{\bHiggs_\alpha^{> \Lcal'}}

\newcommand{\bHiggsabLp}{\bHiggs_{\alpha+\beta}^{> \Lcal'}}
\newcommand{\bHiggsbL}{\bHiggs_\beta^{> \Lcal}}

\newcommand{\bHiggsaLone}{\bHiggs_\alpha^{> \Lcal_1}}
\newcommand{\bHiggsbLone}{\bHiggs_\beta^{> \Lcal_1}}

\newcommand{\bHiggsabLthree}{\bHiggs_{\alpha+\beta}^{> \Lcal_3}}

\newcommand{\bLam}{\mathbf{\Lambda}}
\newcommand{\bLama}{\mathbf{\Lambda}_{\alpha}}

\newcommand{\bHiggstildeab}{\widetilde{\bHiggs_{\alpha, \beta}}}
\newcommand{\bHiggstildeabLLp}{\widetilde{\bHiggs_{\alpha, \beta}}^{> \Lcal, > \Lcal'}}

\newcommand{\Quot}{\mathsf{Quot}}
\newcommand{\QaL}{\mathsf{Q}_\alpha^\Lcal}
\newcommand{\QaLp}{\mathsf{Q}_\alpha^{\Lcal'}}
\newcommand{\QaLLp}{\mathsf{Q}_\alpha^{\Lcal, \Lcal'}}
\newcommand{\QbL}{\mathsf{Q}_\beta^\Lcal}

\newcommand{\QabL}{\mathsf{Q}_{\alpha+\beta}^\Lcal}
\newcommand{\QabLLp}{\mathsf{Q}_{\alpha+\beta}^{\Lcal, \Lcal'}}

\newcommand{\QabLp}{\mathsf{Q}^{\Lcal'}_{\alpha+\beta}}

\newcommand{\QaLone}{\mathsf{Q}_\alpha^{\Lcal_1}}
\newcommand{\QbLone}{\mathsf{Q}_\beta^{\Lcal_1}}
\newcommand{\QaLoneLtwo}{\mathsf{Q}_\alpha^{\Lcal_1, \Lcal_2}}
\newcommand{\QbLoneLtwo}{\mathsf{Q}_\beta^{\Lcal_1, \Lcal_2}}

\newcommand{\QtildeabL}{\widetilde{\mathsf{Q}}^{\Lcal}_{\alpha, \beta}}
\newcommand{\QtildeabLLp}{\widetilde{\mathsf{Q}}^{\Lcal, \Lcal'}_{\alpha, \beta}}
\newcommand{\QtildeabLoneLp}{\widetilde{\mathsf{Q}}^{\Lcal_1, \Lcal'}_{\alpha, \beta}}
\newcommand{\QtildeabLtwoLp}{\widetilde{\mathsf{Q}}^{\Lcal_2, \Lcal'}_{\alpha, \beta}}

\newcommand{\QtildeabLoneLthree}{\widetilde{\mathsf{Q}}^{\Lcal_1, \Lcal_3}_{\alpha, \beta}}

\newcommand{\RtildeabLoneLtwoLp}{\widetilde{\mathsf{R}}^{(\Lcal_1,\Lcal_2),\Lcal'}_{\alpha,\beta}}

\newcommand{\RaLLp}{\mathsf{R}_\alpha^{\Lcal, \Lcal'}}
\newcommand{\RaLpL}{\mathsf{R}_\alpha^{\Lcal', \Lcal}}
\newcommand{\RabLLp}{\mathsf{R}_{\alpha+\beta}^{\Lcal, \Lcal'}}

\newcommand{\ual}{\underline{\alpha}}

\newcommand{\GL}{\mathsf{GL}}

\newcommand{\GaL}{\mathsf{G}_\alpha^\Lcal}
\newcommand{\GaLp}{\mathsf{G}_\alpha^{\Lcal'}}

\newcommand{\GbL}{\mathsf{G}^{\Lcal}_\beta}

\newcommand{\GabL}{\mathsf{G}_{\alpha+\beta}^\Lcal}
\newcommand{\GabLp}{\mathsf{G}_{\alpha+\beta}^{\Lcal'}}
\newcommand{\PabL}{\mathsf{P}_{\alpha,\beta}^{\Lcal}}
\newcommand{\PabLone}{\mathsf{P}_{\alpha,\beta}^{\Lcal_1}}
\newcommand{\PabLtwo}{\mathsf{P}_{\alpha,\beta}^{\Lcal_2}}

\newcommand{\glfrak}{\mathfrak{gl}}

\newcommand{\gaL}{\mathfrak{g}_\alpha^\Lcal}
\newcommand{\gaLp}{\mathfrak{g}_\alpha^{\Lcal'}}

\newcommand{\muaL}{\mu_\alpha^\Lcal}

\newcommand{\F}{\mathbb{F}}

\newcommand{\COHA}{\mathbf{AHA}}

\newcommand{\rk}{\operatorname{rk}}

\newcommand{\End}{\operatorname{End}}


\newcommand{\triend}{\parbox{2mm}{\hfill} \hfill\text{\hspace{0.2mm}}\hfill$\triangle$}
\newcommand{\ocend}{\parbox{2mm}{\hfill} \hfill\text{\hspace{0.2mm}}\hfill$\oslash$}

\newtheorem{theorem}{Theorem}
\newtheorem{proposition}[theorem]{Proposition}
\newtheorem{lemma}[theorem]{Lemma}
\newtheorem{corollary}[theorem]{Corollary}
\newtheorem{conjecture}[theorem]{Conjecture}
\newtheorem{corollary*}{Corollary}
\newtheorem*{theorem*}{Theorem}
\newtheorem*{proposition*}{Proposition}
\newtheorem*{conjecture*}{Conjecture}

\numberwithin{equation}{section}
\numberwithin{theorem}{section}

\theoremstyle{remark}
\newtheorem{ex}[equation]{Example}

\theoremstyle{remark}
\newtheorem{rem}[equation]{Remark}
\newenvironment{remark}{\begin{rem}}{\triend\end{rem}}

\theoremstyle{definition}
\newtheorem{defin}[equation]{Definition}
\newenvironment{definition}{\begin{defin}}{\ocend\end{defin}}


\title[Cohomological Hall algebra of Higgs sheaves on a curve]{Cohomological Hall algebra of Higgs sheaves on a curve}

\author[F.~Sala]{Francesco Sala}
\address[Francesco Sala]{Università di Pisa, Dipartimento di Matematica, Largo Bruno Pontecorvo 5, 56127 Pisa (PI), Italy}
\address{Kavli IPMU (WPI), UTIAS, The University of Tokyo, Kashiwa, Chiba 277-8583, Japan}
\curraddr{}
\email{\href{mailto:francesco.sala@unipi.it}{francesco.sala@unipi.it}}

\author[O.~Schiffmann]{Olivier Schiffmann}
\email{olivier.schiffmann@math.u-psud.fr}
\address[Olivier Schiffmann]{Laboratoire de Math\'ematiques, Universit\'e de Paris-Sud Paris-Saclay, B\^at. 425, 91405 Orsay Cedex, France}

\subjclass[2010]{Primary: 17B37; Secondary: 14A20}
\keywords{Higgs sheaves, Hall algebras}

\thanks{The first-named author is partially supported by World Premier International 
	Research Center Initiative (WPI), MEXT, Japan, by JSPS KAKENHI Grant number JP17H06598 and 
	by JSPS KAKENHI Grant number JP18K13402.} 

\begin{document}

\begin{flushright}
IPMU--18--0002
\end{flushright}

\vskip 1cm

\begin{abstract} We define the cohomological Hall algebra $\COHA_{\Higgs(X)}$ of the ($2$-dimensional) Calabi-Yau category of Higgs sheaves on a smooth projective curve $X$, as well as its nilpotent and semistable variants, in the context of an arbitrary oriented Borel-Moore homology theory. In the case of usual Borel-Moore homology, $\COHA_{\Higgs(X)}$ is a module over the (universal) cohomology ring $\HH$ of the stacks of coherent sheaves on $X$. We characterize its annihilator as a $\HH$-module, and we provide an explicit collection of generators (the collection of fundamental classes $[\bCoh_{r,\, d}]$ of the zero-sections of the maps $\bHiggs_{r,\, d} \to \bCoh_{r,\, d}$, for $r > 0, d \in \Z$ and $r=0$ and $d\in \Z_{>0}$).
\end{abstract}

\maketitle

\epigraph{\emph{salimmo sù, el primo e io secondo,\newline
tanto ch’i’ vidi de le cose belle\newline
che porta ’l ciel, per un pertugio tondo.}}{Dante Alighieri, \emph{la Divina Commedia, Inferno, Canto XXXIV}}
 
\bigskip\tableofcontents

\bigskip\section{Introduction}

The aim of this paper is to define and begin the study of \textit{cohomological Hall algebras} in the context of moduli stacks of Higgs bundles on smooth projective curves over a field $k$.

Let us recall that a Higgs bundle on a complex Riemann surface $X$ of arbitrary genus is a pair $(\Fcal, \phi\colon \Fcal \to \Fcal\otimes \omega_X)$ consisting of a vector bundle $\Fcal$ and a morphism $\phi$ called a \emph{Higgs field}; here $\omega_X$ is the canonical line bundle of $X$. Moduli spaces of \emph{stable} Higgs bundles of fixed rank and degree over $X$ were introduced by Hitchin in the late 80's \cite{art:hitchin1987, art:hitchin1987-II} --- see, e.g., \cite[Appendix]{book:wells2008} and \cite{art:casalainawise2017}, respectively, for a differential-geometric point of view and for an algebro-geometric one to the Hitchin moduli spaces. These moduli spaces have a rich geometry: for example, they are smooth quasi-projective varieties and, from a differential point of view, they are endowed with a complete hyperk\"ahler metric. In addition, the map which associates with any stable Higgs bundle $(\Fcal, \phi)$ the characteristic polynomial of $\phi$ defines a complete integrable system, called the \emph{Hitchin fibration}. The preimage with respect to zero of the Hitchin fibration is the so-called \emph{global nilpotent cone}, which parametrizes stable Higgs bundles $(\Fcal, \phi)$ with nilpotent Higgs field $\phi$. Since its introduction, the Hitchin moduli space has played a preeminent role in the theory of moduli spaces, integrable systems, mirror symmetry, number theory, and string and gauge theories. 

In the well-documented analogy between (smooth, projective) curves and quivers the role of the Hitchin moduli stack is played by the \emph{preprojective stack}, and the analog of the global nilpotent cone is the \emph{Lusztig nilpotent stack}. As for the moduli space of (stable) Higgs bundles, the closest analog is another family of non-compact hyperk\"ahler manifolds which share similar geometric properties, the \emph{Nakajima quiver varieties} --- introduced in \cite{art:nakajima:1994-3}. They admit as well a canonical projective morphism to an affine variety, which is the affinization map --- such a morphism plays the role of the Hitchin fibration; the global nilpotent cone is replaced by the \emph{Lagrangian Nakajima quiver variety}. See \cite{art:schiffmann2006-II, art:ginzburg2012} for an introduction to the theory of quiver varieties, and \cite{art:bozec2016} for more details on the case of quivers with edge loops.

As illustrated by the classical results of Nakajima and others, the (co)homology (or K-theory) of quiver varieties is extremely rich from the point of view of representation theory, and many of its topological invariants have representation-theoretic meanings. For instance, the computation of the Poincar\'e polynomials of Nakajima quiver varieties associated with an arbitrary quiver was done by Hausel in \cite{art:hausel2010}, where he showed that such a polynomial is related to the \emph{Kac's A-polynomial} of the quiver. Recall that, as proved by Kac and Stanley \cite{art:kac1982, art:kac1983} the number of geometrically indecomposable $\F_q$-representations of a quiver $Q$ of given dimension $\mathbf{d}$ is given by a polynomial $A_{Q, \mathbf{d}}(q)$ in $q$, called Kac's $A$-polynomial. Another geometric interpretation of the Kac's A-polynomial is the one in terms of the Poincar\'e polynomial of the preprojective stack, i.e., the stack of representations of the preprojective algebra $\Pi_Q$ associated with $Q$ \footnote{One has also a nilpotent version of such a relation, by considering from the algebraic side nilpotent versions of the Kac's A-polynomial and from the geometric side the generalizations of Lusztig's nilpotent variety introduced in \cite{art:bozecschiffmannvasserot2017,art:schiffmannvasserot2017} (see also \cite{art:bozec2015, art:bozec2016}).}. Much more recently, this relation between a polynomial of geometric nature, such as the Poincar\'e polynomials of Nakajima quiver variety associated with $Q$ and of the stack of representations of $\Pi_Q$, and a polynomial of representation-theoretic nature, such as the Kac's A-polynomial $A_{Q, \mathbf{d}}(q)$, has been ``categorified" in the following way (cf.\ \cite{art:schiffmannvasserot2017}): there exists an associative algebra structure on the Borel-Moore homology of the stack of representations of $\Pi_Q$ --- the so-called \emph{cohomological Hall algebra} --- whose Hilbert series is given by the Kac's A-polynomial of $Q$. Moreover, such an algebra is conjecturally\footnote{The conjecture is true for finite and affine quivers. At the moment there is only a partial result for general quivers: see \cite{art:schiffmannvasserot2017-II}.} isomorphic to the positive part of the Yangian algebra $\mathsf{Y}(\gfrak_Q)$ of the Maulik-Okounkov graded Lie algebra $\gfrak_Q$ (cf.\  \cite{art:maulikokounkov2012, art:schiffmannvasserot2017-II} for a definition of $\gfrak_Q$). What's more, this algebra acts on the Borel-Moore homology of Nakajima quiver varieties associated with the same quiver, and such an action extends to a larger\footnote{At least when the quiver is not of finite type.} algebra of symmetries Nakajima's construction of representations of $\mathsf{U}(\gfrak_Q)$ on the Borel-Moore homology of Nakajima quiver varieties. 

Let us return to the curve case, for which the situation is (from that point of view) much less developed. The Poincar\'e polynomial of the moduli stack of Higgs bundles is ill-defined (i.e. the Betti numbers are in general infinite), and the Betti numbers of the moduli spaces of stable Higgs bundles on curve $X$, for coprime rank and degree, were only recently computed in terms of the \textit{Kac polynomial} of $X$ in \cite{art:schiffmann2016, art:mellit2017}. We refer to \textit{loc. cit.} for a precise definition of these Kac polynomials $A_{r,\, g}(z_1, \ldots, z_{2g})$, which depend on the rank $r$ and the genus $g$ of the curve, and whose evaluation at the Weil numbers $(\sigma_1, \ldots, \sigma_{2g})$ of the curve is equal to the number of geometrically indecomposable vector bundles of rank $r$, degree $d$ on the curve $X$ defined over $\mathbb{F}_q$.

It is natural to wonder if in the curve case also there is a deeper representation theoretic result behind such an enumerative relation. The aim of the present paper is to perform the first step of this program, namely to construct the \emph{cohomological Hall algebra} attached to the stacks of Higgs sheaves over a smooth projective curve $X$ of genus $g$. Note that we consider here the entire stack $\bHiggs(X)\coloneqq\bigsqcup_{r, \, d} \bHiggs_{r, \, d} $ and not only of its stable part. Although our main potential applications in mind are in the context of Borel-Moore homology or K-theory, we develop the theory of these cohomological Hall algebra for an arbitrary free oriented Borel-Moore (OBM) homology theory (as is done in \cite{art:yangzhao2014} in the context of quivers).
 
The construction and detailed study of the cohomological Hall algebra for the stack of Higgs \emph{torsion} sheaves is the subject of the recent work by Minets in \cite{art:minets2018}. Our first main result extends the construction to the higher rank case.
\begin{theorem}[Theorem \ref{theorem:defproduct}] 
Let $X$ be an irreducible smooth projective curve over a field $k$. Let $A$ be either the Borel-Moore homology or an arbitrary free oriented Borel-Moore homology theory\footnote{Since we are dealing with algebraic stacks with infinitely many irreducible components, we consider rather a subgroup $A^0 \subseteq A$ of classes satisfying some support condition, see Section \ref{sec:BM}.}. Then there is a canonical graded associative algebra structure on 
\begin{align}
\COHA_{\Higgs(X)}\coloneqq\bigoplus_{r,\, d} A_\ast(\bHiggs_{r,\, d}) \ .
\end{align}
\end{theorem}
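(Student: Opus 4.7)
The proof strategy is the standard Hall-algebra correspondence applied to Higgs sheaves, adapted to handle the infinite-type nature of the stacks $\bHiggs_{r,d}$. The plan is to introduce the stack $\bHiggstildeab$ parametrising short exact sequences
\begin{equation}
0 \to (\Fcal_1,\phi_1) \to (\Fcal,\phi) \to (\Fcal_2,\phi_2) \to 0
\end{equation}
of Higgs sheaves, together with the natural maps
\begin{equation}
\bHiggsa \times \bHiggsb \xleftarrow{\,p\,} \bHiggstildeab \xrightarrow{\,q\,} \bHiggsab,
\end{equation}
where $p$ remembers $(\Fcal_1,\phi_1)$ and $(\Fcal_2,\phi_2)$ and $q$ remembers the middle term. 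I would then define the convolution product as $m = q_\ast \circ p^\ast$, and prove associativity by means of the standard $3$-step flag correspondence.

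The first technical step is to understand the geometric nature of $p$ and $q$. One expects $p$ to be a vector bundle stack (it is the stack of extensions equipped with a compatible Higgs field, so the fibre over a point is controlled by a two-term complex of $\Ext$/$\Hom$ groups governed by the Calabi--Yau pairing). This makes $p^\ast$ well-defined on any free OBM theory. The map $q$ on the other hand is representable (a Higgs sheaf together with a subobject is a closed condition) but only proper after suitable truncation, since both source and target have infinitely many connected components of unbounded dimension.

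The second, and main, step is to make $q_\ast$ well-defined. I would exploit the truncation by a sufficiently negative line bundle $\Lcal$: for each line bundle $\Lcal$ on $X$ one restricts to the open substack $\bHiggsabL$ of Higgs sheaves $\Fcal$ with $\Hom(\Lcal,\Fcal)=0$ (or its dual version) and pulls back to $\bHiggstildeabLLp$. On these truncations the map $q$ becomes representable and proper, so $q_\ast$ lands in $A_\ast(\bHiggsabL)$. Varying $\Lcal$ and $\Lcal'$ and checking compatibility with the restriction maps along the open immersions $\bHiggsaLone \supset \bHiggsaLtwo$ (for $\Lcal_1 \subset \Lcal_2$) then allows one to pass to the colimit and define the product on the subgroup $A_\ast^0(\bHiggsab) \subseteq A_\ast(\bHiggsab)$ of classes with the appropriate support behaviour. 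The delicate point here is to verify that the product of a class in $A_\ast(\bHiggsaL)$ with one in $A_\ast(\bHiggsbLp)$ is independent of the chosen $\Lcal,\Lcal'$ (once they are ``large enough'').

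Associativity is then checked via the standard double flag correspondence: one forms the stack $\bHiggstildetildeabgLLpLpp$ parametrising two-step filtrations with controlled truncations, and verifies that both $(a\cdot b)\cdot c$ and $a\cdot(b\cdot c)$ equal the pull-push along the natural maps $\bHiggsaL \times \bHiggsbLp \times \bHiggsgLpp \leftarrow \bHiggstildetildeabgLLpLpp \rightarrow \bHiggsabgL$. The needed base-change identities follow from the fact that the relevant squares are cartesian, together with the smoothness/vector-bundle-stack nature of the ``partial-flag forgetting'' maps. The main obstacle I anticipate is precisely the combinatorics of the truncation parameters in this associativity diagram: one must choose the triple $(\Lcal,\Lcal',\Lcal'')$ compatibly for both parenthesisations and show that the resulting maps are still proper/representable and of the correct Tor-dimension; once this is set up, the identity reduces to functoriality of pushforward and pullback and the projection/base change formulae in the chosen OBM theory.
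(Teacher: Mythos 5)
Your overall architecture (extension correspondence, truncation by local charts indexed by line bundles, compatibility under change of chart, passage to the support-restricted subgroup $A^0$, associativity via a two-step flag correspondence) matches the paper's. But there is a genuine gap at what you call the first technical step: the forgetful map $\bHiggstildeab \to \bHiggsa\times\bHiggsb$ sending an inclusion to its sub and quotient is \emph{not} a vector bundle stack morphism, and is not even l.c.i. This is exactly where the two-dimensionality of $\Higgs(X)$ bites. For the hereditary category $\Coh(X)$ the fibre over $(\Ecal,\Gcal)$ is the quotient stack $\Ext^1(\Ecal,\Gcal)/\Hom(\Ecal,\Gcal)$, of constant dimension $-\langle \overline{\Ecal},\overline{\Gcal}\rangle$; but for Higgs sheaves Serre duality gives $\Ext^2(\underline{\Ecal},\underline{\Gcal})\simeq \Hom(\underline{\Gcal},\underline{\Ecal})^\ast$, so $\dim\Ext^1-\dim\Hom=\dim\Ext^2-\langle\underline{\Ecal},\underline{\Gcal}\rangle$ jumps with $\Ext^2$ and the fibres have varying dimension. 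Hence the pullback you call $p^\ast$ does not exist for a general free OBM theory; the ``Calabi--Yau pairing'' is the obstruction, not the cure, and this failure is precisely the difficulty the construction must circumvent.

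The paper's resolution, following Schiffmann--Vasserot and Yang--Zhao, is to work in the local charts $\big[\QaL/\GaL\big]$, realize the local correspondence as the conormal variety $Z^\circ=T^\ast_{W^\circ}(X\times X')$ of a regular embedding $W^\circ\hookrightarrow X\times X'$ of smooth varieties built from the Quot-scheme atlases, and replace the missing pullback by the \emph{refined Gysin morphism} $\Phi^!$ attached to the l.c.i. map $\Phi\colon Z^\circ\to T^\ast X$ of smooth varieties, restricted over the unions of conormals $T^\ast_G X$ (which present the Higgs stacks). Independence of the chart is then a genuine comparison of Gysin maps through the principal-bundle intermediaries $\RaLLp$, not merely functoriality of open restriction. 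Your truncation and associativity outline are reasonable in spirit (modulo the minor point that the correct chart condition is ``strongly generated by $\Lcal$'', and that the map to $\bHiggsab$ is proper representable already before truncation), but without the refined Gysin input the product you propose is simply undefined, so the proof as written does not go through.
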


There are some natural variants of this algebra, in which we replace the stacks $\bHiggs_{r, \, d}$ by the global nilpotent cones $\bLam_{r, \, d}$ or the stacks of semistable Higgs bundles $\bHiggs_{r, \, d}^{\mathsf{ss},\, \nu}$, of a fixed slope $\nu$.  We can as well introduce an equivariant parameter coming from the action of $T\coloneqq\mathbb{G}_m$ by dilations on the Higgs field.
\begin{corollary}[Corollaries~\ref{cor:Higgsvariantnilp}, \ref{cor:Higgsvariantss}] 
There are canonical graded associative algebra structures on 
\begin{align}
\COHA_{\bLam}\coloneqq\bigoplus_{r, \, d} A_\ast(\bLam_{r, \, d})\ , \qquad \COHA_{\Higgs^{\mathsf{ss},\, \nu}(X)}\coloneqq\bigoplus_{\substack{d/r=\nu}} A_\ast(\bHiggs^{\mathsf{ss}}_{r, \, d}) \quad \text{for all } \nu \in \mathbb{Q} \cup \{\infty\}
\end{align}
and on their $T$-equivariant cousins $\COHA^T_{\bLam}$, $\COHA^T_{\Higgs^{\mathsf{ss},\, \nu}(X)}$. 
\end{corollary}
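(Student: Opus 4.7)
The strategy is to obtain each variant by restricting the extension correspondence used in the proof of Theorem \ref{theorem:defproduct} to appropriate substacks, and checking that the restricted correspondence still lands in the correct substack of $\bHiggs_{\alpha+\beta}$. The essential input is the fact that, inside the abelian category $\Higgs(X)$, both the full subcategory of nilpotent Higgs sheaves and, for each $\nu \in \Q \cup \{\infty\}$, the full subcategory of $\nu$-semistable Higgs sheaves, are Serre subcategories; in particular both are closed under extensions.

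For the nilpotent version, given an extension $0 \to (\Fcal_1, \phi_1) \to (\Fcal, \phi) \to (\Fcal_2, \phi_2) \to 0$ with $\phi_i^{N_i}=0$, one checks that $\phi^{N_1+N_2}=0$, since $\phi^{N_2}(\Fcal) \subseteq \Fcal_1 \otimes \omega_X^{\otimes N_2}$, on which $\phi$ coincides with $\phi_1 \otimes \id$. For the semistable version, any extension $E$ of two semistable Higgs sheaves $E_1, E_2$ of the same slope $\nu$ is again semistable of slope $\nu$: a hypothetical sub-Higgs-sheaf $E' \subset E$ of slope $>\nu$ would produce either a sub-Higgs-sheaf of $E_1$ or a sub-Higgs-sheaf of $E_2$ of slope $>\nu$, contradicting semistability. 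Consequently, the preimage of $\bLam_\alpha \times \bLam_\beta$ (respectively of $\bHiggs_\alpha^{\mathsf{ss},\nu} \times \bHiggs_\beta^{\mathsf{ss},\nu}$) in the extension stack $\bHiggstildeab$ maps under the second projection to $\bLam_{\alpha+\beta}$ (respectively to $\bHiggs_{\alpha+\beta}^{\mathsf{ss},\nu}$). Since $\bLam_{r,d} \hookrightarrow \bHiggs_{r,d}$ is a closed substack and $\bHiggs_{r,d}^{\mathsf{ss},\nu} \hookrightarrow \bHiggs_{r,d}$ is an open substack, the smooth/refined Gysin pullback and proper pushforward defining the Hall product restrict compatibly to these sub-correspondences, yielding associative multiplications on $\COHA_{\bLam}$ and $\COHA_{\Higgs^{\mathsf{ss},\nu}(X)}$; associativity is inherited from that of the full COHA via the obvious compatibility of the two-step extension correspondences with the closed/open restriction.

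For the $T$-equivariant variants, the dilation action $t \cdot (\Fcal, \phi) = (\Fcal, t\phi)$ visibly preserves both the nilpotent locus and each semistable locus (the slope depends only on the underlying coherent sheaf), and the naturally induced $T$-action on $\bHiggstildeab$ makes every map in the Hall correspondence $T$-equivariant. The entire construction of Theorem \ref{theorem:defproduct} therefore lifts verbatim to $T$-equivariant Borel--Moore homology (or to any $T$-equivariant refinement of the chosen OBM theory), producing the algebras $\COHA^T_{\bLam}$ and $\COHA^T_{\Higgs^{\mathsf{ss},\nu}(X)}$. The main technical point to verify carefully is the compatibility of the refined Gysin pullback appearing in the Hall product with restriction along the relevant open/closed inclusions; this should follow from standard base-change and functoriality of the six operations in the OBM setting, combined with the extension-closure property above.
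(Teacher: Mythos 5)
Your proposal is correct and follows essentially the same route as the paper: the key input is that $\Higgs^{\mathsf{nilp}}(X)$ and $\Higgs^{\mathsf{ss},\nu}(X)$ are closed under extensions in $\Higgs(X)$, so the convolution correspondence restricts to the closed (resp.\ open) substacks, and the multiplication is defined by the same refined Gysin pullback along the ambient l.c.i.\ morphism on each local chart, with compatibility of $\Phi^!$ with proper pushforward and open restriction (\cite[Theorem~6.2 (a), (b)]{book:fulton1998}) giving the algebra homomorphisms; the $T$-equivariant case is identical since the dilation action preserves all the structures. One small inaccuracy that does not affect the argument: $\Higgs^{\mathsf{ss},\nu}(X)$ is \emph{not} a Serre subcategory of $\Higgs(X)$ (it is not closed under subobjects or quotients there), but only extension-closure is actually needed and used.
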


There are some strong relations between these variants and the original cohomological Hall algebra of $\bHiggs(X)$. For instance, the proper pushforward map $\COHA_{\bLam} \to \COHA_{\Higgs(X)}$ is an algebra homomorphism. Likewise, the open restriction map $\COHA_{\Higgs^\nu(X)} \to \COHA_{\Higgs^{\mathsf{ss},\, \nu}(X)}$ is an algebra homomorphism, where
\begin{align}
\COHA_{\Higgs^{\nu}(X)}\coloneqq\bigoplus_{d/r=\nu} A_\ast(\bHiggs_{r, \, d})\ .
\end{align}
Moreover, the proper pushforward induces an isomorphism of localized algebras
\begin{align}\label{eqintro:1}
\COHA^T_{\bLam} \otimes_{A_T(\pt)} \Frac(A_T(\pt)) \stackrel{\sim}{\longrightarrow} \COHA^T_{\Higgs(X)} \otimes_{A_T(\pt)} \Frac(A_T(\pt)).
\end{align}
(see Proposition~\ref{prop:localizationT}).

Although the definition of cohomological Hall algebras can be given for an arbitrary free OBM theory in a very uniform fashion, the properties of these algebras strongly depend on the choice of the OBM theory. Our results concerning the structure of $\COHA_{\Higgs(X)}$ are for the moment restricted to the cases of usual Borel-Moore homology (or Chow groups). So \emph{we assume until the end of this introduction that $A=H_\ast$ and we restrict ourselves to $A^0$}. The cohomology ring of the stack $\bCoh(X)$ of coherent sheaves on $X$ acts on $\COHA_{\Higgs(X)}$ by pullback to $\bHiggs(X)$ and cap product. By Heinloth's generalization of the Atiyah-Bott theorem (see Theorem \ref{T:Heinloth}), this ring is (freely\footnote{Only in the positive rank case.}) generated by tautological classes, and we can define a universal ring (in fact, (co)commutative Hopf algebra) $\HH$ which acts on $\COHA_{\Higgs(X)}$ (and on all its cousins).

The second main result of the paper concerns torsion-freeness. It can be seen as a key technical step to embed our algebra in a bigger shuffle-type algebra (as done in the rank zero case in \cite[Section~3]{art:minets2018}).
\begin{theorem}[Theorem \ref{T:torsionfree}] 
Let $\alpha\in (\Z^2)^+$. Then $H^T_\ast(\bLama)$ is a torsion-free $H^\ast(\bCoha)\otimes \Q[t]$-module.
\end{theorem}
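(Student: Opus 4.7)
The plan is to exploit the $T$-action on $\bLama$ by dilation of the Higgs field, combined with a Białynicki-Birula-type stratification of the nilpotent cone by the Jordan type of the nilpotent Higgs field.

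First, one reduces to the truncated setting. Using the filtration of $\bLama$ by the substacks $\bLama^{> \Lcal} \coloneqq \bLama \cap \bHiggsaL$ for line bundles $\Lcal$ of sufficiently small degree, together with the truncation formalism developed in the earlier sections, torsion-freeness of $H^T_\ast(\bLama)$ over $H^\ast(\bCoha) \otimes \Q[t]$ reduces to torsion-freeness of $H^T_\ast(\bLama^{>\Lcal})$ over $H^\ast(\bCohaL) \otimes \Q[t]$ for each sufficiently negative $\Lcal$. On the smooth atlas $\QaL \to \bCohaL$, $\bHiggsaL$ lifts to a smooth (twisted) cotangent-type variety $\mathsf{HQ}_{\alpha}^{\Lcal}$, inside of which the preimage of $\bLama^{>\Lcal}$ is a closed $T\times\GaL$-stable Lagrangian subvariety.

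Second, stratify by Jordan type. For a nilpotent $\phi\colon \Fcal \to \Fcal \otimes \omega_X$, the kernel filtration $0 \subset \ker\phi \subset \ker\phi^2 \subset \cdots \subset \Fcal$ is $\phi$-stable and produces a tuple of classes $\underline{\gamma}=(\gamma_1, \ldots, \gamma_N)$ (the classes of its graded pieces) summing to $\alpha$. Letting $\underline{\gamma}$ vary gives a finite $T$-stable stratification $\bLama^{>\Lcal} = \bigsqcup_{\underline{\gamma}} \bLama^{>\Lcal, \underline{\gamma}}$, and each stratum admits a smooth forgetful map to the iterated flag-of-sheaves stack $\widetilde{\bCoh}_{\underline{\gamma}}^{> \Lcal}$ parameterizing filtrations of the prescribed type, with affine fibers given by the strictly-triangular nilpotent Higgs fields compatible with the filtration. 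This flag stack is itself built as a tower of vector-bundle-stack extensions of the stacks $\bCoh_{\gamma_i}^{> \Lcal}$, whose cohomology is torsion-free (and polynomial in the positive-rank factors) by Heinloth's theorem. Combined with the affine-bundle fiber description and the tautological $T$-action lift, this shows that each $H^T_\ast(\bLama^{>\Lcal, \underline{\gamma}})$ is free over $H^\ast$ of the flag base tensored with $\Q[t]$, and in particular torsion-free as an $H^\ast(\bCohaL) \otimes \Q[t]$-module.

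The main obstacle will be to assemble the global torsion-freeness from the stratification via the long exact sequences of equivariant Borel-Moore homology, namely to show that the associated spectral sequence degenerates. This reduces to a parity argument: each stratum has $T$-equivariant Borel-Moore homology concentrated in even total degree, a consequence of the affine-bundle-over-flag-of-sheaves description combined with the even-degree cohomology of the stacks of coherent sheaves. The rank-zero case established by Minets \cite{art:minets2018} provides the base of the induction on the ranks of the graded pieces, with the inductive step supplied by the flag-stack description of the strata and the compatibility of the $T$-action with this description.
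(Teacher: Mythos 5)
Your reduction to local charts and your stratification by Jordan type do match ingredients the paper also uses (Proposition \ref{prop:MS5.2}, the purity remark and the exact sequences \eqref{eq:filtration}), but the decisive step of your argument has a gap. You claim that each stratum, being free over the cohomology of the flag-of-sheaves base tensored with $\Q[t]$, is ``in particular'' torsion-free as an $H^\ast(\bCohaL)\otimes\Q[t]$-module. This does not follow: the $H^\ast(\bCohaL)$-module structure on the stratum is induced by the restriction map from $H^\ast(\bCohaL)$ to the cohomology of the flag base (essentially an iterated coproduct on tautological classes), and freeness over the target ring says nothing about torsion-freeness over the source unless one first proves that the target is torsion-free over the source. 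In the quiver analogue this is the finite free extension $H^\ast_{\GL_n}(\pt)\hookrightarrow H^\ast_{\GL_{n_1}\times\GL_{n_2}}(\pt)$, but here $H^\ast(\bCohaL)\simeq H^\ast_{\GaL}(\QaL)$ and the corresponding map is neither finite nor obviously flat, so this is exactly the hard point and it is left unaddressed. (At the level of the full ind-stack one could try to salvage the claim via the Hopf structure of $\HH$ --- $\HH^{\otimes s}$ is free over $\HH$ acting through the coproduct, by the standard untwisting isomorphism --- but one would then have to identify the module structure on each stratum with a twisted coproduct, deal with the rank-zero tensor factors, and control the inverse limit over charts; none of this appears in your proposal.)

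The paper's proof is organized differently precisely to sidestep this. From the Jordan-type stratification it extracts only the weak statement that $H^T_\ast(\bLama)$ is $I$-torsion-free for $I=\HH^+\otimes\Q[t]$, which requires nothing more than freeness of each graded piece $\bigotimes_i H^\ast_T(\bCoh_{\alpha_i})$ over $\Q[t]$. Full torsion-freeness is then obtained from the $T$-equivariant localization theorem: the pushforward $H^T_\ast(\bCoha)_{\mathsf{loc},I}\to H^T_\ast(\bLama)_{\mathsf{loc},I}$ from the zero section (the $T$-fixed locus) is an isomorphism after localizing at $I$, checked chart by chart using Lemma \ref{L:prooftor1} to match $I$ with the augmentation ideal of $H^\ast_{\GaL}(\pt)$; hence $H^T_\ast(\bLama)$ embeds into a localization of the free module $H^T_\ast(\bCoha)$. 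This localization step is the actual engine of the proof and is absent from your proposal, while your parity/degeneration discussion, though correct, addresses a step (assembling the filtration) that is not where the difficulty lies.
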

From the analogy with the case of quivers, it is natural to expect that in fact $\COHA^T_{\Lambda}$ is of generic rank one (not free!), but we do not prove this here.

Our final main result, in a spirit similar to \cite[Theorem~B (e)]{art:schiffmannvasserot2017}, provides a family of generators for $\COHA_{\Lambda}$.
\begin{theorem}[Theorem \ref{T:gen}, Corollary \ref{cor:gen2}] 
The $\HH$-algebra $\COHA_{\Lambda}$ is generated by the collection of fundamental classes $\{[\bLam_{(r,\, d)}]\}_{r, \, d}$ of the zero sections of the projections $\bHiggs_{r, \, d} \to \bCoh_{r, \, d}$. 
\end{theorem}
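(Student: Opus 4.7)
The plan is to use a triangularity argument based on a Jordan-type stratification of $\bLam_{r,\, d}$. Every nilpotent Higgs sheaf $(\Fcal,\phi)$ carries its canonical kernel filtration
\[
0 = \Fcal_0 \subset \Fcal_1 \subset \cdots \subset \Fcal_\ell = \Fcal, \qquad \Fcal_i = \ker\bigl(\phi^i \colon \Fcal \to \Fcal \otimes \omega_X^{\otimes i}\bigr),
\]
whose type $\underline{\alpha} = (\alpha_1,\ldots,\alpha_\ell)$ records the classes of the successive quotients in $(\Z^2)^+$. This yields a constructible decomposition $\bLam_{r,\, d} = \bigsqcup_{\underline{\alpha}} \bLam_{r,\, d}^{\underline{\alpha}}$; the unique open stratum ($\ell = 1$) is the zero section $\bCoh_{r,\, d} \hookrightarrow \bLam_{r,\, d}$ with fundamental class $[\bLam_{(r,\, d)}]$, and deeper strata correspond to longer flags.

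The key geometric input is that, for each type $\underline{\alpha}$, the closure $\overline{\bLam_{r,\, d}^{\underline{\alpha}}}$ admits a natural Hecke-type resolution parametrizing flags of coherent sheaves with $\phi$ shifting the flag by one step and vanishing on each graded piece. This resolution is, essentially by construction, the iterated Hall-product diagram computing $[\bLam_{(\alpha_1)}] \star [\bLam_{(\alpha_2)}] \star \cdots \star [\bLam_{(\alpha_\ell)}]$. Consequently, this Hall product equals the pushforward of the fundamental class of the resolution, which in turn coincides with $[\overline{\bLam_{r,\, d}^{\underline{\alpha}}}]$ up to classes supported on strictly finer strata. Ordering the $\underline{\alpha}$ by decreasing length and running descending induction on the stratification, every stratum-closure fundamental class then lies in the $\HH$-subalgebra generated by the collection $\{[\bLam_{(r',\, d')}]\}$.

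To upgrade from stratum fundamental classes to all of $A_\ast(\bLam_{r,\, d})$, I would invoke Heinloth's generation theorem (Theorem \ref{T:Heinloth}) applied to the factors $\bCoh_{\alpha_i}$ entering the resolution: their Borel-Moore homology is generated over the base by tautological classes, and these tautological classes realize on the Hecke correspondences exactly the ambient $\HH$-action on $\COHA_\Lambda$. The long exact sequences of Borel-Moore homology attached to the stratification then express arbitrary classes in $A_\ast(\bLam_{r,\, d})$ as $\HH$-combinations of the iterated Hall products produced in the previous paragraph, with the torsion-freeness result (Theorem \ref{T:torsionfree}) ensuring that these classes are detected faithfully at each step. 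Corollary \ref{cor:gen2} follows by specializing the same argument to the subclass of generators indexed by $r > 0$ and by $r = 0,\, d \in \Z_{>0}$.

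The main obstacle is to make the triangularity statement of the second paragraph sharp. Unlike the quiver analogue of \cite{art:schiffmannvasserot2017}, the strata $\bLam_{r,\, d}^{\underline{\alpha}}$ are not literally affine bundles over flag moduli of coherent sheaves, so one must control the excess-intersection discrepancy between the Hecke resolution's fundamental class and its pushforward into $\bLam_{r,\, d}$. To isolate the leading terms, I would exploit the scaling $T$-action and the localization isomorphism \eqref{eqintro:1} between $\COHA^T_\Lambda$ and $\COHA^T_{\Higgs(X)}$, which sees the top-term contribution of each Hecke correspondence cleanly; the rank-zero base of the induction is then supplied by Minets' shuffle description in \cite{art:minets2018}.
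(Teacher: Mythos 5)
Your overall strategy --- stratify $\bLama$ by Jordan types, show that iterated Hall products of the zero-section classes attached to the subquotients of the kernel filtration reproduce each stratum class up to lower-order terms, and then use tautological classes to fill out the homology of each stratum --- is exactly the skeleton of the paper's proof of Theorem \ref{T:gen}. But two things go wrong. The first is a geometric inversion that reverses your induction: the zero section $\bLam_{(\alpha)} \simeq \bCoha$ is a \emph{closed} substack of $\bLama$ (Proposition \ref{prop:preceq}), not the unique open stratum; it is the \emph{minimal} element for the order $\preceq$, and since $\underline{\beta}\preceq\ual$ forces $\ell(\underline{\beta})\leq\ell(\ual)$, the error terms of the product $[\bLam_{(\gamma_s)}]\star\cdots\star[\bLam_{(\gamma_1)}]$ (where $\gamma_i$ is the class of $\ker(\theta^i)/\ker(\theta^{i-1})$, i.e.\ the $i$-th row sum of the diagram \eqref{diag:Young}) are supported on $\bLam_{\prec\ual}$, hence on \emph{shorter} Jordan types. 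The induction must therefore ascend from $(\alpha)$; ordered by decreasing length as you propose, the inductive hypothesis never covers the correction terms, and since $J_\alpha$ is countably infinite there is no maximal type to start from.

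The second, more serious, issue is that the ``main obstacle'' you isolate is not resolved by the tools you invoke. The localization isomorphism \eqref{eqintro:1} compares $\COHA^T_{\bLam}$ with $\COHA^T_{\Higgs(X)}$ and says nothing about the leading term of a convolution product, and the induction here runs over Jordan types within a fixed class $\alpha$, not over the rank, so Minets' rank-zero shuffle description supplies no base case. What actually closes the gap in the paper is a concrete geometric statement about the iterated convolution diagram \eqref{eq:proofgen2}: (i) $p_{\underline{\gamma}}$ restricted over the open stratum $\bLam_{\ual}$ is an \emph{isomorphism}, because the kernel filtration $\ker\theta\subseteq\ker\theta^{2}\subseteq\cdots$ is canonical; and (ii) $q_{\underline{\gamma}}$ is smooth with connected fibres of exactly the expected dimension $-\sum_{i\neq j}\langle\gamma_i,\gamma_j\rangle$ over a suitable open subset of $\prod_i\bLam_{(\gamma_i)}$. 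Together with base change for refined Gysin maps these give $\big([\bLam_{(\gamma_s)}]\star\cdots\star[\bLam_{(\gamma_1)}]\big)\vert_{\bLam_{\ual}}=[\bLam_{\ual}]$ on the nose, with no excess term on the open stratum --- this is the sharp triangularity you were missing. Finally, to get all of $H_\ast(\bLam_{\ual})$ one needs more than the diagonal $\HH$-action on the product: one acts by \emph{independent} polynomials in tautological classes on each factor $H_\ast(\bLam_{(\gamma_i)})$ and uses that $H^\ast(\bLam_{\ual})$ is generated by the Chern classes of the subquotients $\ker(\theta^i)/\ker(\theta^{i-1})$, via Proposition \ref{prop:MS5.2} and Theorem \ref{T:Heinloth}; the torsion-freeness Theorem \ref{T:torsionfree} plays no role, the purity filtration \eqref{eq:filtration} already providing the exact sequences needed to assemble the strata.
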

Of course, using Formula \eqref{eqintro:1} we may deduce similar results for $\COHA_{\Higgs(X)}$.

Let us conclude this introduction with some heuristics and speculations concerning the structure and representation theory of $\COHA_{\Higgs(X)}$. 

First, we expect that our cohomological Hall algebra acts on the (oriented) Borel-Moore homology of moduli spaces of stable Higgs bundles (of fixed slope) and of Minets' generalization of Nakajima quiver varieties\footnote{These moduli spaces parametrize stable point in the cotangent stack of the stack of \emph{coframed pairs}, i.e., pairs $(\Fcal, \Ecal\to \Fcal)$ where $\Ecal, \Fcal$ are coherent sheaves on $X$, and $\Ecal$ is fixed, see \cite{phdthesis:minets2018-II}.}. Slightly more generally, it is natural to expect that it will also act on suitable moduli spaces of stable and framed sheaves on smooth (stacky) surfaces containing the curve $X$ as an embedded divisor of self-intersection $2(g-1)$\footnote{More general embedded curves would require constructing a cohomological Hall algebra for \emph{arbitrarily} twisted Higgs bundles.}. If $X=\PP^1$, examples of such surfaces are those described in \cite[Section~2, Remarks (ii)]{art:ginzburgkapranovvasserot1995} and the stacky surfaces defined in \cite{art:bruzzopedrinisalaszabo2016}.

Next, by analogy with the case of quivers, one can hope for a strong relation between the algebra $\COHA_{\Higgs(X)}$ and the (usual) Hall algebra of curves of genus $g$ over finite fields. Very slightly more precisely, one would expect the existence of a graded Lie algebra $\mathfrak{g}_{g}$ whose Hilbert series is given by the Kac polynomials $A_{r,\, g}$, which would be a 'generic' form of the Hall-Lie algebra of curves of genus $g$ on the one hand (cf.\ \cite[Section~8.3]{art:schiffmann2016} for the definition of such a Lie algebra), and whose affinization (or Yangian) would be isomorphic to $\COHA_{\Higgs(X)}$.

Finally, by the nonabelian Hodge correspondence \cite{art:simpson1994-II}, moduli spaces of stable rank $r$ Higgs bundles on $X$ are diffeomorphic to (twisted) character varieties of $X$ for the group $\mathsf{GL}(r)$. The topology of the latter moduli space has been extensively studied by Hausel, Letellier and Rodriguez-Villegas (cf.\ \cite{art:hauselletellierrodriguezvillegas2013} and the conjectures stated therein). In \cite{art:portasala2019}, the first-named author and Mauro Porta have constructed cohomological Hall algebras for the moduli stack of vector bundles with flat connections on $X$ and for the character stack of $X$ for the group $\mathsf{GL}(r)$, respectively. In addition, in \emph{loc.cit.}, some relations between the three different cohomological Hall algebras have been established.  We propose the following conjecture, which can be seen as a representation theoretical version of the nonabelian Hodge correspondence.\footnote{There is an equivalent construction of cohomological Hall algebra of the untwisted character stack for $\mathsf{GL}(r)$ by means of Kontsevich-Soibelman critical CoHA (see \cite{art:davisonmeinhardt2016, Davison_character}). The conjecture \ref{conj} was also made by B. Davison (see e.g. \cite{art:davison2016}).}
\begin{conjecture}\label{conj}
The algebra $\COHA_{\Higgs^{\mathsf{ss},0}(X)}$ is isomorphic to the cohomological Hall algebra of the genus $g$ untwisted character stack.
\end{conjecture}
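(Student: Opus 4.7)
The natural starting point is the non-abelian Hodge correspondence of Simpson, which gives a real-analytic identification between the coarse moduli space of semistable Higgs bundles of slope zero on $X$ and the corresponding $\mathsf{GL}(r)$-character variety, and lifts to a diffeomorphism of the underlying topological stacks $\bHiggs_{r,\,0}^{\mathsf{ss},\,0}$ and $\mathbf{Loc}_{r}(X)$. Since Borel-Moore homology is a topological invariant and both $\COHA_{\Higgs^{\mathsf{ss},0}(X)}$ and the Davison-Meinhardt algebra of the character stack are built on Borel-Moore homology, this correspondence immediately furnishes a canonical graded isomorphism of the underlying vector spaces. The entire content of the conjecture therefore lies in matching the Hall multiplications.

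The plan is as follows. First, interpret both products as proper pushforwards along Hecke-type correspondences parametrizing short exact sequences---of slope-zero semistable Higgs sheaves on one side, and of representations of $\pi_1(X)$ on the other---together with their two boundary projections (remembering sub and quotient, respectively the total object). Second, apply Simpson's $\mathscr{C}^\infty$-equivalence between polystable slope-zero Higgs bundles and semisimple local systems to identify the substack moduli and their projections up to diffeomorphism, compatibly with rank and degree grading. Third, try to upgrade this to a comparison of the two full correspondence stacks, after which compatibility of products follows by base change plus the topological invariance of proper pushforward. Finally, invoke the generation result Theorem~\ref{T:gen} (via the localization isomorphism~\eqref{eqintro:1} and the restriction map to the semistable locus) to reduce the check that a given topological isomorphism is multiplicative to its behavior on the low-rank tautological generators $[\bLam_{(r,\,d)}]$, where one expects a direct computation to be feasible.

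The principal obstacle, and the origin of the ``conjectural'' status of the statement, is that the non-abelian Hodge correspondence is an equivalence of tensor categories but \emph{not} of abelian categories: a short exact sequence of semistable Higgs sheaves of slope zero does not in general transport to a short exact sequence of local systems, because extensions of polystable objects need not remain polystable and Simpson's functor is not defined on the ambient abelian category. Consequently the two Hecke correspondences are genuinely different stacks, agreeing only on their polystable locus, and the straightforward transport of structure fails. A plausible way forward is to use Mochizuki's twistor family interpolating the Dolbeault and Betti pictures and to construct a $\PP^1$-family of CoHA structures whose two specializations are the two sides of the conjecture, so that the isomorphism becomes a flatness statement; alternatively one may try to define both CoHAs intrinsically in terms of harmonic bundles, where the comparison is tautological. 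Reducing the whole problem to a handful of generating classes via Theorem~\ref{T:gen} and computing both products explicitly on those classes looks to me like the most tractable short-term route.
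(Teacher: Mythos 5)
There is no proof to compare against: the statement you are addressing is stated in the paper as a \emph{conjecture} (also attributed to B.~Davison), and the authors offer no argument for it. Your text, to its credit, is not really a proof either --- it is a strategy sketch that ends by conceding the central difficulty --- so the honest verdict is that the gap you yourself identify is genuine and is precisely why the statement remains open.

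Beyond that, your very first step already overreaches. The non-abelian Hodge correspondence gives a homeomorphism between the \emph{coarse moduli spaces} of semistable degree-zero Higgs bundles and the character varieties; it does not lift to a diffeomorphism of the stacks $\bHiggs^{\mathsf{ss}}_{r,\,0}$ and the Betti (character) stack. At strictly semistable points the two stacks have genuinely different automorphism groups and different local structure, so their Borel--Moore homologies are not identified by any ``topological invariance'' argument; even the claimed canonical isomorphism of underlying graded vector spaces is part of what must be proved, not a free starting point. (This is one reason the Davison--Meinhardt algebra is built with vanishing-cycle/critical cohomology on the character stack rather than plain Borel--Moore homology of a coarse space.) Your second observation --- that Simpson's equivalence is not exact, so short exact sequences of slope-zero semistable Higgs sheaves do not transport to short exact sequences of local systems and the two Hecke correspondences are different stacks --- is correct and is the deeper obstruction. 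The suggested remedies (a twistor-family of CoHA structures, or reduction to the generators of Theorem~\ref{T:gen} via the localization isomorphism and the restriction to the semistable locus) are reasonable directions but are not carried out, and note also that Theorem~\ref{T:gen} generates $\COHA_{\bLam}$ over $\HH$, so transporting generation to $\COHA_{\Higgs^{\mathsf{ss},0}(X)}$ requires the surjectivity of the restriction map, which is itself not established in the paper. In short: the proposal correctly diagnoses why the statement is hard, but it does not prove it, and the paper does not claim to either.
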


This paper is organized as follows. Section \ref{sec:CohHiggs} provides notations and serves as a reminder concerning stacks of coherent and Higgs sheaves on smooth projective curves. The cohomological Hall algebras (or $A$-homological Hall algebras) are defined in Section \ref{sec:COHAHiggs}. From this point on, we restrict ourselves to the context of Borel-Moore homology. In Section \ref{sec:torsionfreeness} we introduce the universal cohomology ring of the stacks of coherent sheaves on curves of a fixed genus, and prove the torsion-freeness result. Section \ref{sec:generation} is devoted to the generation theorem.

\subsection*{Acknowledgements}

The first seed of the present paper can be traced back to some discussions during two visits of the first-named author to Paris: the first one was under the umbrella of the Research in Paris program supported by the Institut Henri Poincaré, while the host of the second visit was the Université of Paris-Sud. The first-named author thanks both institutions for the hospitality and support. In addition, some of the results of the present paper were presented during the workshop on ``Hitchin systems in Mathematics and Physics" (February 2017, Perimeter Institute, Canada) and the workshop on ``Geometric Representation Theory" (July 2017, University of Glasgow, UK). The first-named author thanks the organizers and the participants of both workshops for interesting discussions. Finally, we thank B. Davison, D. E. Diaconescu, S. Meinhardt, A. Minets, A. Negut, Y. Soibelman and É. Vasserot for interesting discussions and comments.

\bigskip\section{Stacks of coherent and Higgs sheaves on a curve}\label{sec:CohHiggs}

In this section we introduce the stacks of coherent and Higgs sheaves on smooth projective curves, and recall some of their key properties. Because our construction of the multiplication in the cohomological Hall algebras uses the local charts defined in terms of Quot schemes, we go into some depth in describing the latter explicitly.

\subsection{The curve}

Let $X$ be an irreducible smooth projective curve of genus $g$ over a field $k$, and $\omega_X$ its canonical line bundle. As usual, we denote by $\rk(\Fcal), \deg(\Fcal)$ the rank and degree of a coherent sheaf $\Fcal$ on $X$ and by 
\begin{align}
\mu(\Fcal) =\frac{\deg(\Fcal)}{\rk(\Fcal)}\in \mathbb{Q} \cup \{ \infty\}
\end{align} 
its slope. Denote by $\Coh(X)$ the category of coherent sheaves on $X$. It is an abelian category of homological dimension one. Denote by $\Ksf(X)$ the \emph{Grothendieck group} of $X$ and by $\left[\Fcal\right]$ the class of a coherent sheaf $\Fcal$. Let $\Ksf(X)^+$ be the semigroup of $\Ksf(X)$ consisting of classes of the form $\left[\Fcal\right]$, for a coherent sheaf $\Fcal$ on $X$. There are natural maps 
\begin{align}
\rk\colon \Ksf(X)\to \Z_{\geq 0}\qquad \text{and} \qquad\deg\colon \Ksf(X)\to \Z
\end{align}
assigning to $[\Fcal]$ the rank and degree of $\Fcal$ respectively. This yields a projection $\Ksf(X)\to \Ksfnum(X)$, where $\Ksfnum(X)\coloneqq\Z^2$ is the \emph{numerical Grothendieck group} of $X$. We define the (numerical) class of a coherent sheaf $\Fcal$ as the pair $\overline{\Fcal}\coloneqq\big(\rk(\Fcal), \deg(\Fcal)\big)$. We accordingly set
\begin{align}
\Ksfnumplus(X)=\{(r,d)\in \Z^2\;\vert \; r >0, d \in \Z\;\text{or}\; r=0, d \geq 0\}\eqqcolon (\Z^2)^+\ .
\end{align}
Finally, recall that the Euler form on $\Ksf(X)$, which descends to $\Ksfnum(X)$, is explicitly given by the following formula:
\begin{multline}
\langle\, \overline{\Ecal}, \overline{\Fcal}\,\rangle\coloneqq\dim \Hom(\Ecal,\Fcal)-\dim \Ext^1(\Ecal, \Fcal)\\
=(1-g)\rk(\Ecal)\rk(\Fcal)+(\rk(\Ecal)\deg(\Fcal)-\rk(\Fcal)\deg(\Ecal))\ .
\end{multline}

\subsection{Stack of coherent sheaves}\label{sec:stackcoherentsheaves}

For $\alpha\in \Ksfnumplus(X)$, let $\bCoha$ be the stack parameterizing coherent sheaves on $X$ of class $\alpha$. It is a smooth algebraic stack, locally of finite type over $\Spec(k)$, and irreducible of dimension $-\langle \alpha, \alpha \rangle$; in addition, $\bCoha$ is equipped with a \emph{tautological sheaf} $\Efrak_\alpha\in \Coh\big(\bCoha\times X)$ (see \cite[Théorème~4.6.2.1]{book:laumonmoretbailly2000}; the smoothness follows, e.g., from the description of an atlas of $\bCoha$ given below). Since $\bCoha$ is smooth, the cotangent complex\footnote{The theory of cotangent complexes for algebraic stacks is developed in \cite[Chapter~16]{book:laumonmoretbailly2000} and \cite{art:olsson2007}.} $\LCoha$ of $\bCoha$ is perfect (hence dualizable) of Tor-amplitude [0, 1] (cf.\ \cite[Proposition~17.10]{book:laumonmoretbailly2000}); the dual complex, the tangent complex $\TCoha$, can be described explicitly as (cf.\ \cite[Section~2]{art:portasala2019})
\begin{align}\label{eq:tangentcomplex}
\TCoha =\R p_\ast\, \R \Hcal om(\Efrak_\alpha, \Efrak_\alpha)[1]\ , 
\end{align}
where $p\colon \bCoha\times X\to \bCoha$ is the projection.
  
For later purposes, let us give an atlas for $\bCoha$; this will be used in Section~\ref{sec:COHAHiggs} for the definition of the cohomological Hall algebra associated with the moduli stacks of Higgs sheaves. Let us fix a line bundle $\Lcal$ on $X$. We will say that a coherent sheaf $\Fcal$ is \emph{strongly generated by} $\Lcal$ if the canonical morphism $\Hom(\Lcal,\Fcal) \otimes \Lcal \to \Fcal$ is surjective and $\Ext^1(\Lcal,\Fcal)=\{0\}$. We denote by $\CohL(X)\subset \Coh(X)$ the full subcategory of coherent sheaves on $X$ which are strongly generated by $\Lcal$. Note that $\CohL(X)$ is stable under quotients and extensions and that
 \begin{align}
 \dim \Hom(\Lcal,\Fcal)=\langle \Lcal,\Fcal\rangle
 \end{align}
for all $\Fcal \in \CohL(X)$. Let $u_\alpha^\Lcal\colon \bCohaL\hookrightarrow \bCoha$ be the open substack of $\bCoha$ parameterizing sheaves strongly generated by $\Lcal$ and of class $\alpha$. We call $\bCohaL$ a \emph{local chart} of $\bCoha$. The stack $\bCohaL$ can be realized as a global quotient stack as follows. Let $\Quot_\alpha^\Lcal\coloneqq\Quot_{X/k}\big(\Lcal\otimes k^{\, \langle \overline{\Lcal}, \alpha\rangle}, \alpha\big)$ be the Quot scheme parameterizing isomorphism classes of quotients $\phi \colon \Lcal\otimes k^{\, \langle \overline{\Lcal}, \alpha\rangle}\twoheadrightarrow \Fcal$ such that $\overline{\Fcal}=\alpha$ (see \cite[Section~2.2]{book:huybrecthslehn2010} for an introduction to the theory of Quot schemes). This is a projective $k$-scheme, which is singular in general, of finite type and carries a canonical $\GaL\coloneqq\GL(k,\langle \overline{\Lcal},\alpha\rangle)$-action defined by $g \cdot \phi\coloneqq\phi \circ (\mathsf{id}_\Lcal \otimes g^{-1})$.  Its Zariski tangent space at a point $\big[\phi \colon \Lcal\otimes k^{\, \langle \overline{\Lcal},\alpha\rangle}\twoheadrightarrow \Fcal\big]$ is $\Hom(\ker(\phi), \Fcal)$, while the obstruction to the smoothness lies in $\Ext^1(\ker(\phi),\Fcal)$. Consider the open subscheme $\QaL \subset \Quot_\alpha^\Lcal$ be the open subscheme whose $k$-points are
\begin{align}
 \QaL(k)\coloneqq\{\big[\phi\colon\Lcal\otimes k^{\, \langle \overline{\Lcal},\alpha\rangle}\twoheadrightarrow \Fcal\big] \in \Quot_\alpha^\Lcal(k)\;\vert\; \phi_\ast\colon k^{\,\langle \overline{\Lcal},\alpha\rangle} \stackrel{\sim}{\longrightarrow} \Hom(\Lcal,\Fcal)\}\ .
\end{align}
\begin{proposition}
The following hold:
\begin{itemize}
\item[(i)] The scheme $\QaL$ is $\GaL$-invariant and there is a canonical isomorphism of algebraic stacks
\begin{align}
\bCohaL \simeq \big[ \QaL/\GaL\big]\ .
\end{align}
\item[(ii)] $\QaL$ is smooth and reduced.
\end{itemize} 
\end{proposition}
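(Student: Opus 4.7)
For (i), the plan is to verify $\GaL$-invariance by direct inspection, then to construct a morphism of stacks $[\QaL/\GaL] \to \bCohaL$ and check it is an equivalence. For the invariance, for $g \in \GaL$ the relation $(g\cdot \phi)_\ast = \phi_\ast \circ g^{-1}$ on global $\Hom(\Lcal,-)$ shows that $(g\cdot\phi)_\ast$ is an isomorphism iff $\phi_\ast$ is, so the defining open condition of $\QaL$ is preserved. For the stack equivalence, I would send a family $\big[\phi \colon \Lcal\otimes \Ocal^{\langle \overline{\Lcal},\alpha\rangle} \twoheadrightarrow \Fcal\big]$ to the underlying $\Fcal$, and check essential surjectivity and that the fibres are $\GaL$-torsors. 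Essential surjectivity: for $\Fcal \in \CohL(X)$, the hypothesis that $\Lcal$ strongly generates $\Fcal$ combined with $\dim \Hom(\Lcal,\Fcal) = \langle \overline{\Lcal},\alpha\rangle$ (which follows from $\Ext^1(\Lcal,\Fcal)=0$) lets me pick a basis of $\Hom(\Lcal,\Fcal)$, yielding a surjection in $\QaL$ over $\Fcal$. The fibre description: two such surjections differ by an automorphism of $\Lcal\otimes k^{\langle \overline{\Lcal},\alpha\rangle}$ which reduces, by the iso on $\Hom(\Lcal,-)$, to an element of $\GaL$, and this identifies the fibre with $\GaL$ acting simply transitively. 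The argument should be carried out in families (for $S$-points) using the same recipe with $\Ocal_S$-modules.

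For (ii), the plan is to invoke the standard deformation/obstruction theory of Quot schemes recalled just before the statement: at a point $[\phi \colon \Lcal\otimes k^{\,\langle \overline{\Lcal},\alpha\rangle}\twoheadrightarrow \Fcal]$ with kernel $K$, the tangent space is $\Hom(K,\Fcal)$ and the obstructions live in $\Ext^1(K,\Fcal)$. I would show that this obstruction space vanishes whenever the point lies in $\QaL$. To do so, apply $\Hom(-,\Fcal)$ to the short exact sequence $0 \to K \to \Lcal\otimes k^{\langle \overline{\Lcal},\alpha\rangle} \to \Fcal \to 0$ and look at the tail
\begin{align}
\Ext^1(\Lcal\otimes k^{\langle \overline{\Lcal},\alpha\rangle},\Fcal) \longrightarrow \Ext^1(K,\Fcal) \longrightarrow 0,
\end{align}
which uses that $X$ is a smooth projective curve, so $\Coh(X)$ has homological dimension one. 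The left-hand term equals $\Ext^1(\Lcal,\Fcal)^{\langle \overline{\Lcal},\alpha\rangle}=\{0\}$ by the defining property of $\CohL(X)$, whence $\Ext^1(K,\Fcal)=0$. The main (mild) technical point to keep track of is precisely this identification of the tangent/obstruction groups of $\Quot_\alpha^\Lcal$ with $\Hom(K,\Fcal)$, $\Ext^1(K,\Fcal)$, as recalled in the text. Combined with the fact that $\QaL$ is open in $\Quot_\alpha^\Lcal$, this gives smoothness of $\QaL$ at every closed point, and hence smoothness.

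Finally, since $\QaL$ is a scheme of finite type over the field $k$ which is smooth, it is in particular reduced. This completes the plan. I do not expect any serious obstacle: the only point requiring care is the passage to families in (i) (to get an equivalence of stacks rather than of groupoids of $k$-points), which is routine via the universal quotient on $\QaL\times X$.
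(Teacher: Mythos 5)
Your proposal is correct and follows essentially the same route as the paper, which handles (i) by the standard torsor argument (citing Laumon--Moret-Bailly) and (ii) by the vanishing of the obstruction space $\Ext^1(\ker(\phi),\Fcal)$ together with the smoothness criterion for Quot schemes. You merely make explicit the two steps the paper leaves as citations, namely the check that the fibres of $\QaL \to \bCohaL$ are $\GaL$-torsors and the derivation of $\Ext^1(\ker(\phi),\Fcal)=0$ from $\Ext^1(\Lcal,\Fcal)=0$ and the homological dimension of $\Coh(X)$.
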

Statement (i) is shown for example in the proof of \cite[Théorème~4.6.2.1]{book:laumonmoretbailly2000}, while (ii) follows from the vanishing of $\Ext^1(\ker(\phi),\Fcal)$ for any point $\big[\phi \colon \Lcal\otimes k^{\, \langle \overline{\Lcal},\alpha\rangle}\twoheadrightarrow \Fcal\big]\in \QaL$ and \cite[Theorem~5.3]{book:newstead1978}. We may think of $\QaL$ as the fine moduli space parameterizing pairs $(\Fcal,u)$ where $\Fcal \in \CohL(X)$ is of class $\alpha$ and $u$ is a trivialization $k^{\, \langle \overline{\Lcal},\alpha\rangle} \stackrel{\sim}{\longrightarrow} \Hom(\Lcal,\Fcal)$. 

Now, we shall provide an explicit description of $(u_\alpha^\Lcal)^\ast \TCoha$ and $(u_\alpha^\Lcal)^\ast \LCoha$, which will be useful later on. On $\bCohaL \simeq \big[ \QaL/\GaL\big]$, the tautological sheaf $\Efrak_\alpha$ fits into a short exact sequence of tautological $\GaL$-equivariant sheaves on $\QaL \times X$
\begin{align}
0\to \Kfrak_\alpha^\Lcal \to \Ocal_{\QaL}^{\oplus\,  \langle \overline{\Lcal},\alpha\rangle}\boxtimes \Lcal \to \Efrak_\alpha^\Lcal=(u_\alpha^\Lcal)^\ast\Efrak_\alpha\to 0
\end{align}
such that the fibers over $\big\{\big[\phi\colon \Lcal\otimes k^{\, \langle \overline{\Lcal},\alpha\rangle}\twoheadrightarrow \Fcal\big]\big\}\times X$ are
\begin{align}
\Kfrak_\alpha^\Lcal\vert_{\{[\phi]\}\times X}=\ker \phi \quad\text{and}\quad \Efrak_\alpha^\Lcal \vert_{\{[\phi]\}\times X}=\Fcal=\Lcal\otimes k^{\, \langle \overline{\Lcal},\alpha\rangle}/\ker \phi\ .
\end{align}

Let $p_\alpha^\Lcal\colon \QaL \times X \to \QaL$ denote the projection. Since $\Ext^1(\Kfrak_\alpha^\Lcal\vert_{\{[\phi]\}\times X}, \Efrak_\alpha^\Lcal\vert_{\{[\phi]\}\times X})=\{0\}$ for any $[\phi]\in \QaL$, the complex $\mathbb{R}(p_\alpha^\Lcal)_\ast((\Kfrak_\alpha^\Lcal)^\vee \otimes \Efrak_\alpha^\Lcal)$ is a locally free sheaf on $\QaL$ of rank $\dim \QaL$, which we will simply denote by $\Hom(\Kfrak_\alpha^\Lcal,\Efrak_\alpha^\Lcal)$. Such a locally free sheaf coincides with the tangent bundle $\Tcal_{\QaL}$ of $\QaL$. 

Likewise, the fiber of $\mathbb{R}(p_\alpha^\Lcal)_\ast((\Ocal_{\QaL}^{\oplus\,  \langle \overline{\Lcal},\alpha\rangle}\boxtimes \Lcal^\vee) \otimes \Efrak_\alpha^\Lcal)$ over $\big\{\big[\phi\colon \Lcal\otimes k^{\, \langle \overline{\Lcal},\alpha\rangle}\twoheadrightarrow \Fcal\big]\big\}\times X$ is identified, via $\phi_\alpha$, with  $\gaL\coloneqq \glfrak(k,\langle \overline{\Lcal},\alpha\rangle)$, the Lie algebra of $\GaL$, hence $\mathbb{R}(p_\alpha^\Lcal)_\ast((\Ocal_{\QaL}^{\oplus\,  \langle \overline{\Lcal},\alpha\rangle}\boxtimes \Lcal^\vee) \otimes \Efrak_\alpha^\Lcal)\simeq \gaL \otimes \mathcal{O}_{\QaL}$. Collecting the above, from Formula \eqref{eq:tangentcomplex} we get that
\begin{align}
(u_\alpha^\Lcal)^\ast \TCoha\simeq \big[\gaL \otimes \Ocal_{\QaL} \stackrel{\delta_\alpha^\Lcal}{\longrightarrow}\Tcal_{\QaL}\big]\ ,
\end{align}
where the complex on the right-hand-side is concentrated in degree [-1, 0]. Thus, 
\begin{align}\label{eq:cotancomplex}
(u_\alpha^\Lcal)^\ast \LCoha\simeq \big[\Tcal_{\QaL}^\ast  \stackrel{\tilde \mu_\alpha^\Lcal}{\longrightarrow} (\gaL)^\ast \otimes \Ocal_{\QaL}\big]\ ,
\end{align}
where $\tilde \mu_\alpha^\Lcal$ (the moment map) is obtained by dualizing the canonical restriction morphism $\delta_\alpha^\Lcal$ defined, at the level of points $\big[\phi \colon \Lcal\otimes k^{\, \langle \overline{\Lcal},\alpha\rangle}\twoheadrightarrow \Fcal\big]$, as $\delta_\alpha^\Lcal(u)=( \phi \circ u)\vert_{\ker \phi}$ for $u \in \gaL = \End(\Lcal \otimes k^{\langle \Lcal, \alpha\rangle})$.

Next, let us realize $\bCoha$ as an \emph{ind-algebraic stack}\footnote{We consider ind-algebraic stacks in a very broad sense, as stated in \cite[Definition~4.2.1]{art:emertongee2015}.}. As a first thing, let us make explicit the inductive system of $\bCohaL$'s. Let $\mathfrak{Pic}(X)$ be the groupoid formed by all line bundles on $X$ with their isomorphisms. We define the following preorder $\prec$ on (the set of objects of) $\mathfrak{Pic}(X)$ such that it is endowed with the structure of a directed groupoid: we say that $\Lcal \prec \Lcal'$, for two line bundles $\Lcal$ and $\Lcal'$, if $\Lcal'$ is strongly generated by $\Lcal$. In that situation, any coherent sheaf $\Fcal$, which is strongly generated by $\Lcal'$, is also strongly generated by $\Lcal$. Hence, we have a chain of open embeddings
\begin{align}
\bCohaLp \subseteq \bCohaL\subseteq  \bCoha
\end{align}
coming from the inclusions of full subcategories $\CohLp(X) \subset \CohL(X)$. We will describe these embeddings explicitly in local atlases, namely we will
construct the map $j_{\Lcal,\Lcal',\alpha}\colon \big[ \QaLp/\GaLp\big] \hookrightarrow \big[\QaL/\GaL\big]$. 

To define $j_{\Lcal,\Lcal',\alpha}$, we shall provide another equivalent description of $\bCohaLp$ as a global quotient stack. For this, consider the open subscheme $\QaLLp \subset \QaL$ consisting of all points $\big[\phi\colon \Lcal \otimes k^{\, \langle \overline{\Lcal}, \alpha\rangle} \twoheadrightarrow \Fcal\big]$ for which $\Fcal \in \CohLp(X)$. Then $\bCohaLp\simeq  \big[ \QaLLp/\GaL\big]$ and we have a canonical open embedding $\big[ \QaLLp/\GaL\big]\hookrightarrow \big[\QaL/\GaL\big]=\bCohaL$. Now, we need to compare the two realizations $\big[\QaLLp/\GaL\big]$ and $ \big[ \QaLp/\GaLp\big]$ by providing a canonical explicit isomorphism between them. Let $p_X\colon \QaLp\times X\to X$ be the projection. Consider the $\GaLp$-equivariant sheaf $\Hom(p_X^\ast\Lcal, \Efrak_\alpha^{\Lcal'})$ over $\QaLp$, which by the same reasoning as above is locally free and of rank $\langle \Lcal, \alpha\rangle$. Let $\RaLLp$ be the total space of the associated $\GaL$-bundle. Therefore $\RaLLp$ carries an action of $\GaL \times \GaLp$ and 
\begin{align}
\RaLLp /  \GaL  \simeq \QaLp, \qquad 
\big[ \RaLLp /  \GaL \times \GaLp\big] \simeq \bCohaLp \ ,
\end{align}
the first isomorphism being in the category of $\GaLp$-schemes\footnote{Here and in the following, we call \emph{$G$-scheme} a scheme endowed with an action of an algebraic group $G$.}. Likewise, let $\RaLpL$ be the total space of the $\GaL$-equivariant $\GaLp$-bundle
$\Hom(p_X^\ast \Lcal', \Efrak_\alpha^\Lcal)$ over $\QaLLp$, so that
\begin{align}
 \RaLpL /  \GaLp  \simeq \QaLLp, \qquad 
\big[ \RaLpL /  \GaL \times \GaLp\big] \simeq \bCohaLp\ ,
\end{align}
the first isomorphism being in the category of $\GaL$-schemes.

\begin{lemma}\label{L:rall} 
There is a canonical isomorphism $\RaLLp \simeq \RaLpL$ in the category of $\GaL \times \GaLp$-schemes.
\end{lemma}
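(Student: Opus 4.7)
The plan is to identify both $\RaLLp$ and $\RaLpL$ as fine moduli spaces for the \emph{same} functor, from which the desired isomorphism and its $\GaL \times \GaLp$-equivariance are tautological.

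First I would unravel what $\RaLLp$ parameterizes. The base $\QaLp$ classifies pairs $(\Fcal,u')$ with $\Fcal$ a flat family over the test base of class $\alpha$ in $\CohLp(X)$ and $u'$ a trivialization of $\Hom(\Lcal',\Fcal)$. Since $\Lcal \prec \Lcal'$ forces $\CohLp(X) \subseteq \CohL(X)$, the relative sheaf $\Hom(p_X^\ast \Lcal, \Efrak_\alpha^{\Lcal'})$ is locally free of rank $\langle \overline{\Lcal},\alpha\rangle$, and passing to its $\GaL$-frame bundle shows that $\RaLLp$ represents the functor whose $S$-points are triples $(\Fcal_S, u'_S, v_S)$ consisting of a family $\Fcal_S \in \CohLp(X)$ of class $\alpha$ together with a trivialization $u'_S$ of $\Hom(\Lcal',\Fcal_S)$ and a trivialization $v_S$ of $\Hom(\Lcal,\Fcal_S)$.

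Second, I would do the analogous unraveling for $\RaLpL$. The open subscheme $\QaLLp \subset \QaL$ cuts out precisely those $(\Fcal,u)$ with $\Fcal \in \CohLp(X)$, so that $\Hom(p_X^\ast\Lcal',\Efrak_\alpha^\Lcal)$ is locally free of rank $\langle \overline{\Lcal'},\alpha\rangle$ on it. Taking its $\GaLp$-frame bundle shows that $\RaLpL$ represents the functor whose $S$-points are triples $(\Fcal_S, u_S, v'_S)$ of exactly the same type: a family $\Fcal_S \in \CohLp(X)$ of class $\alpha$ equipped with trivializations of $\Hom(\Lcal,\Fcal_S)$ and of $\Hom(\Lcal',\Fcal_S)$.

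Since both functors classify the same data, Yoneda yields a canonical scheme isomorphism $\RaLLp \simeq \RaLpL$. The remaining verification is equivariance: in both descriptions $(g,g') \in \GaL \times \GaLp$ acts by right-composing the trivialization of $\Hom(\Lcal,\Fcal_S)$ with $g^{-1}$ and that of $\Hom(\Lcal',\Fcal_S)$ with $g'^{-1}$, which matches the action on $\RaLLp$ (where $\GaLp$ acts through the base $\QaLp$ and $\GaL$ fiber-wise on the frame bundle) and on $\RaLpL$ (with the roles of the two groups reversed). The only technical point to justify is that the relevant relative $\Hom$-sheaves are indeed locally free and that their formation commutes with arbitrary base change; this follows from the vanishings $\Ext^1(\Lcal,\Fcal) = \Ext^1(\Lcal',\Fcal) = 0$ for $\Fcal \in \CohLp(X)$, which are built into the definition of strong generation, so no real obstacle arises.
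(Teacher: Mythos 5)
Your proof is correct and follows essentially the same route as the paper: both identify $\RaLLp$ and $\RaLpL$ as fine moduli spaces for the functor of triples $(\Fcal,u,v)$ with $\Fcal\in\CohLp(X)$ of class $\alpha$ and $u,v$ trivializations of $\Hom(\Lcal,\Fcal)$ and $\Hom(\Lcal',\Fcal)$, the only cosmetic difference being that the paper states each functor asymmetrically (a quotient map plus one trivialization) and writes down the explicit matching $(\phi,u)\mapsto(\overline{u},\overline{\phi})$, which your symmetric description absorbs into the equivalence between a quotient $\Lcal\otimes V\twoheadrightarrow\Fcal$ inducing $V\simeq\Hom(\Lcal,\Fcal)$ and a trivialization of $\Hom(\Lcal,\Fcal)$.
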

\proof
By construction, $\RaLLp$ represents the contravariant functor $\mathsf{Aff}/k \to (\mathsf{Sets})$ which assigns to an affine $k$-variety $S$ the set of pairs $([\phi],u)$ where $\big[\phi\colon \Lcal' \boxtimes \Ocal_S^{\oplus\,  \langle \overline{\Lcal'}, \alpha\rangle} \twoheadrightarrow \Fcal\big]$ belongs to $\QaLp(S)$ and $u$ is a trivialization $\mathcal{O}_S^{\oplus \langle \Lcal, \alpha\rangle} \simeq \Hom(\Lcal \boxtimes \Ocal_S, \Fcal)$. Similarly, $\RaLpL$ represents the contravariant functor $\mathsf{Aff}/k \to (\mathsf{Sets})$ which assigns to an affine $k$-variety $S$ the set of pairs $([\psi],v)$ where
$\big[\psi\colon \Lcal \boxtimes \mathcal{O}_S^{\oplus\, \langle \overline{\Lcal}, \alpha\rangle} \twoheadrightarrow \Fcal\big]$ belongs to $\QaLLp(S)$ and $v$ is a trivialization $\Ocal_S^{\oplus \, \langle \overline{\Lcal'}, \alpha\rangle} \simeq \Hom(\Lcal' \boxtimes \mathcal{O}_S, \Fcal)$. The isomorphism between the two functors is given by the assignment $(\phi,u) \mapsto (\overline{u},\overline{\phi})$ where $\overline{u}: \Lcal \boxtimes \mathcal{O}_S^{\oplus \langle \Lcal,\alpha\rangle} \twoheadrightarrow \Fcal$ and $\overline{\phi}: \mathcal{O}_S^{\oplus \langle \Lcal', \alpha\rangle} \simeq \Hom_{\mathcal{O}_X}(\Lcal' \boxtimes \mathcal{O}_S, \Fcal)$ are canonically associated to $u$ and $\phi$ respectively.
\endproof
To sum up,  for any $\alpha$ and any pair of line bundles $\Lcal \prec \Lcal'$, $\RaLLp$ is a smooth $\GaL \times \GaLp$-scheme such that
\begin{align}\label{eq:RLLp}
\RaLLp /  \GaL  \simeq \QaLp\ , \qquad  \RaLLp /  \GaLp  \simeq \QaLLp\ .
\end{align}
Note that $\RaLLp$ is nothing but the fiber product of stacks 
\begin{align}
\RaLLp=\QaLp \underset{\bCohaLp}{\times} \QaLLp\ ,
\end{align}
and can be thought of as the fine moduli space parameterizing triples $(\Fcal,u,v)$ where $\Fcal$ is a coherent sheaf on $X$ of class $\alpha$ which is strongly generated by $\Lcal'$ and $u,v$ a pair of trivializations of $\Hom(\Lcal, \Fcal), \Hom(\Lcal',\Fcal)$ respectively. The open embedding $j_{\Lcal,\Lcal',\alpha}$ is now given by the composition
\begin{align}\label{eq:transmapscoh}
\bCohaLp=\big[\QaLp/\GaLp\big]\simeq [\RaLLp/\GaL\times \GaLp]\simeq [\RaLpL/\GaL\times \GaLp]\simeq \big[\QaLLp/\GaL\big]\hookrightarrow \bCohaL\ .
\end{align}
Thus we have a direct system $\langle \bCohaL, j_{\Lcal, \Lcal', \alpha}\rangle$ and thanks to Serre's theorem we get
\begin{align}
\bCoha \simeq \lim_{\genfrac{}{}{0pt}{}{\to}{\Lcal}} \, \bCohaL \coloneqq\lim_{\genfrac{}{}{0pt}{}{\to}{\mathfrak{Pic}(X)}} \, \bCohaL \ .
\end{align}
This provides the desired description of $\bCoha$ as an ind-algebraic stack.

Let us finish this section by recalling the structure of the (singular) cohomology ring of the stacks $\bCoha$. Here we assume that $k=\C$ and simply write $H^\ast(\bullet)$ for $H^\ast(\bullet, \Q)$. Let us fix a basis $\Pi=\{1, \pi_1, \ldots, \pi_{2g}, \varpi\}$ of $H^\ast(X)$, with $1 \in H^0(X), \pi_1, \ldots, \pi_{2g} \in H^1(X)$ and $\varpi \in H^2(X)$. For $i \in \N$, let 
\begin{align}\label{eq:chernclasses}
c_i(\Efrak_\alpha)=\sum_{\pi \in \Pi} c_{i,\pi}(\Efrak_\alpha) \otimes \pi^* \in H^\ast(\bCoha) \otimes H^\ast(X)
\end{align}
be the K\"unneth decomposition of the $i$-th Chern class of the tautological sheaf $\Efrak_\alpha$. Here $\{\pi^*\}_\pi$ stands for the dual basis with respect to the intersection form. For any (super) commutative algebra $A$ we write $S^d(A)$ for the subalgebra of $A^{\otimes d}$ fixed under the natural action of the symmetric group $\mathfrak{S}_d$.
\begin{theorem}[Heinloth, \cite{art:heinloth2012}]\label{T:Heinloth} 
The rational cohomology ring $H^\ast(\bCoha)$ is described as follows:
\begin{enumerate}\itemsep0.2cm
\item[(a)] If $\alpha=(0,d)$ then $H^\ast(\bCoha) \simeq S^d(H^\ast(X)[z])$,
\item[(b)] If $\alpha=(r,d)$ with $r >0$ then $H^\ast(\bCoha) \simeq \Q[c_{i,\pi}(\Efrak_\alpha)]_{i,\pi}$ is freely generated  (as a supercommutative algebra)
by the classes $c_{i,\pi}(\Efrak_\alpha)$ for $i \geq 2, \pi \in \Pi$ and $i=1, \pi \in \Pi \setminus \{1\}$.
\end{enumerate}
Moreover, the stack $\bCoha$ is cohomologically pure\footnote{The Hodge theory of algebraic stacks, locally of finite type, has been introduced for example in \cite[Section~2]{art:dhillon2006}.} for any $\alpha$.
\end{theorem}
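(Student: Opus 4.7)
The plan is to treat parts (a) and (b) by different reductions, and to extract purity as a by-product of both arguments. Before starting, observe that $c_{1,\varpi}(\Efrak_\alpha)$ is tautologically the constant $d$ (by Grothendieck-Riemann-Roch applied to the universal sheaf), so the real content of (b) is the algebraic independence of the remaining K\"unneth components of Chern classes and their generation of the full cohomology ring.

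For part (a), I would begin with the identification $\bCoh_{0,1} \simeq X \times B\mathbb{G}_m$, which yields $H^\ast(\bCoh_{0,1}) \simeq H^\ast(X) \otimes \Q[z]$. For general $d$, introduce the stack $\widetilde{\bCoh_{0,d}}$ parametrizing length-$d$ torsion sheaves $\Tcal$ equipped with a complete flag $0=\Tcal_0\subset \Tcal_1\subset \cdots \subset \Tcal_d=\Tcal$ by subsheaves of successively increasing length. The successive-quotient morphism $\widetilde{\bCoh_{0,d}} \to (\bCoh_{0,1})^d$ is an iterated affine-bundle stack (its fibers parametrize extensions of length-one torsion sheaves), and hence is a cohomological isomorphism. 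The forget-flag morphism $\widetilde{\bCoh_{0,d}} \to \bCoh_{0,d}$ is proper, and a standard invariant-theoretic argument (via either the decomposition theorem for small maps or Hall-algebraic plethysm) identifies $H^\ast(\bCoh_{0,d})$ with the $\mathfrak{S}_d$-invariants of $(H^\ast(X) \otimes \Q[z])^{\otimes d}$, delivering $S^d(H^\ast(X)[z])$.

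For part (b), I would exploit the Quot atlases $\bCoha^{>\Lcal} \simeq [\QaL/\GaL]$ from the previous subsection, combined with a stratification of $\bCoha$ by the length $k$ of the torsion subsheaf. The locally closed stratum $\bCoha^{(k)}$, via the canonical extension $0\to T(\Fcal)\to \Fcal\to \Fcal/T(\Fcal)\to 0$, admits a vector-bundle-stack projection onto $\bBun_{r,d-k}\times \bCoh_{0,k}$. Applying Atiyah-Bott to the bundle factor, part (a) to the torsion factor, and the K\"unneth formula, each $H^\ast(\bCoha^{(k)})$ becomes a polynomial ring freely generated by restricted tautological classes. An induction on $k$ using the Gysin long exact sequences for the closed chain $\bCoha^{\leq 0}\subset \bCoha^{\leq 1}\subset \cdots$ assembles $H^\ast(\bCoha)$ as the prescribed polynomial ring in the K\"unneth components of Chern classes of $\Efrak_\alpha$. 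Purity emerges in parallel: each stratum is pure (by Atiyah-Bott, part (a), and homotopy invariance of weights under affine bundle stacks), and the Gysin connecting maps must vanish for weight reasons.

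The main obstacle will be controlling the Gysin spectral sequence associated to this stratification. One must simultaneously prove that the ambient tautological classes restrict to a full set of generators on each stratum (a direct Chern class computation from the extension sequence) \emph{and} that the restriction morphism to the open bundle locus $\bBuna$ is injective on cohomology (a purity-dependent statement). The resulting circularity between ``stratum is pure'' and ``Gysin sequence splits'' can be broken by inducting outwards from the deepest closed stratum, using at each step that the newly-added stratum contributes only classes of pure weight matching the prescribed polynomial structure.
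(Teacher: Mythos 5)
Before assessing the mathematics, note that the paper offers no proof of this statement: Theorem \ref{T:Heinloth} is quoted from Heinloth \cite{art:heinloth2012} and used as a black box, so the only meaningful comparison is with Heinloth's own argument, which, like your sketch, combines the Atiyah--Bott description of $H^\ast(\bBuna)$ with the stratification of $\bCoha$ by the length of the torsion subsheaf and a separate treatment of the stacks of torsion sheaves. Your route through part (a) (flag stacks of torsion sheaves, a vector-bundle-stack morphism to $(\bCoh_{0,1})^{d}$, and a Springer-type analysis of the proper forgetful map) is viable, granting the standard but nontrivial identification of the resulting $\mathfrak{S}_d$-action on $(H^\ast(X)[z])^{\otimes d}$ with the permutation action.

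The genuine gap is in part (b), and it concerns freeness rather than generation. Your claim that each stratum $\bCoha^{(k)}$ has cohomology ``freely generated by restricted tautological classes'' is false for every $k\geq 1$: the stratum is a vector-bundle stack over $\bBun_{r,\,d-k}\times\bCoh_{0,k}$, and by part (a) the factor $H^\ast(\bCoh_{0,k})\simeq S^{k}(H^\ast(X)[z])$ is an invariant ring with many relations, not a free supercommutative algebra. Hence freeness cannot be propagated stratum by stratum, and your Gysin induction, even granting purity and surjectivity of restriction onto every stratum, proves at most that the classes $c_{i,\pi}(\Efrak_\alpha)$ \emph{generate} $H^\ast(\bCoha)$. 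Their algebraic independence is the genuinely hard half, and it cannot be tested on the open stratum $\bBuna$: there $\Efrak_\alpha$ restricts to a rank-$r$ bundle, so $c_{i,\pi}(\Efrak_\alpha)$ vanishes on $\bBuna$ for all $i>r$ and infinitely many of the asserted free generators die under that restriction. The missing ingredient is a Poincar\'e-series comparison: the split Gysin sequences give $P(\bCoha)=\sum_{k\geq 0}t^{2rk}\,P(\bBun_{r,\,d-k})\,P(\bCoh_{0,k})$, and one must verify that this equals the Hilbert series of the free supercommutative algebra on the stated generators before the surjection from that free algebra can be promoted to an isomorphism. Two smaller soft spots: surjectivity of restriction onto $\bCoha^{(k)}$ is not a ``direct Chern class computation'' --- from the Whitney formula $c(\Efrak_\alpha\vert_{\bCoha^{(k)}})=c(\Vcal)\,c(\Tcal)$, with $\Vcal$ the universal locally free quotient and $\Tcal$ the universal torsion subsheaf, one must still separate the two factors, which uses $c_i(\Vcal)=0$ for $i>r$ together with Newton-type identities; and there is no ``deepest closed stratum'' from which to start an induction (the closures $\overline{\bCoha^{(k)}}$ form an infinite descending chain), nor is there a circularity to break: each stratum is pure a priori by Atiyah--Bott, part (a) and homotopy invariance, the connecting maps then vanish for weight reasons, and the induction runs on the codimension $rk\to\infty$, so that each fixed cohomological degree receives contributions from only finitely many strata.
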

By Poincar\'e duality, the assignment $c \mapsto c \cap [\bCoha]$ identifies $H^i(\bCoha)$ with $H_{-\langle \alpha,\alpha\rangle-i}(\bCoha)$, where $H_\ast(\bullet)$ stands for the Borel-Moore homology with rational coefficients. Hence the above theorem also yields a description of the Borel-Moore homology groups $H_\ast(\bCoha)$.

\subsection{Higgs sheaves}

Recall that a \emph{Higgs sheaf} on $X$ is a pair $(\Fcal,\theta)$ with $\Fcal \in \Coh(X)$ and $\theta \in \Hom(\Fcal,\Fcal \otimes \omega_X)$. We say that $(\Fcal,\theta)$ is of numerical class $\alpha=(r,d)$ if $\Fcal$ is. Higgs sheaves form the object of a Calabi-Yau two-dimensional abelian category $\Higgs(X)$, in which the Euler form and Serre duality take the following form (see e.g. \cite{art:gothenking2005}). Define, for $\underline{\Fcal}\coloneqq(\Fcal, \theta_\Fcal)$, $\underline{\Gcal}\coloneqq(\Gcal, \theta_\Gcal) \in \Higgs(X)$
\begin{align}
\langle \underline{\Fcal},\underline{\Gcal}\rangle = \dim \Hom(\underline{\Fcal},\underline{\Gcal}) -\dim \Ext^1(\underline{\Fcal},\underline{\Gcal}) + \dim \Ext^2(\underline{\Fcal},\underline{\Gcal})\ .
\end{align}
Then
\begin{align}\label{eq:eulerformhiggs}
\langle \underline{\Fcal},\underline{\Gcal} \rangle=\langle \Fcal, \mathcal{G} \rangle - \langle \Fcal, \mathcal{G}\otimes \omega_X\rangle = \langle \Fcal, \mathcal{G} \rangle +\langle\mathcal{G},  \Fcal \rangle =2(1-g)\rk(\Fcal)\rk(\mathcal{G})\ .
\end{align}
Moreover, Serre's duality holds:
\begin{align}
\Ext^i(\underline{\Fcal},\underline{\Gcal}) \simeq \Ext^{2-i}(\underline{\Gcal},\underline{\Fcal})^\ast
\end{align}
for all $i=0,1,2$. The slope of a Higgs sheaf is the slope of its underlying coherent sheaf, i.e.,
\begin{align}
\mu(\underline{\Fcal})=\mu(\Fcal)=\frac{\deg(\Fcal)}{\rk(\Fcal)} \ .
\end{align}
A Higgs sheaf is \emph{semistable} if $\mu(\underline{\Gcal}) \leq \mu(\underline{\Fcal})$ for any Higgs subsheaf $\underline{\Gcal} \subset \underline{\Fcal}$, i.e. if $\mu(\Gcal) \leq \mu(\Fcal)$ for any subsheaf $\Gcal$ of $\Fcal$ such that $\theta_\Fcal(\Gcal) \subset \Gcal \otimes \omega_X$. Semistable Higgs sheaves of fixed slope $\nu \in \Q \cup \{\infty\}$ form an abelian subcategory $\Higgs^{\mathsf{ss},\nu}(X)$ of $\Higgs(X)$, which is stable under extensions.

For a Higgs sheaf $(\Fcal, \theta)$, we denote by $\theta^{\, k}$ the composition
\begin{align}
\begin{aligned}
  \begin{tikzpicture}[xscale=3.5,yscale=-1]
  \node (A0_0) at (0.3, 0) {$\theta^{\, k}\colon \Fcal$};
\node (A1_0) at (1, 0) {$ \Fcal\otimes \omega_X$};
\node (A2_0) at (1.8, 0) {$\cdots$};
\node (A3_0) at (2.8, 0) {$\Fcal\otimes \omega_X^{\otimes\, k}$};
    \path (A0_0) edge [->]node [auto] {$\scriptstyle{\theta}$} (A1_0);
   \path (A1_0) edge [->]node [auto] {$\scriptstyle{\theta\otimes\mathsf{id}_{\omega_X}}$} (A2_0);
   \path (A2_0) edge [->]node [auto] {$\scriptstyle{\theta^{\otimes\, (k-1)}\otimes\mathsf{id}_{\omega_X}}$} (A3_0);
  \end{tikzpicture}
\end{aligned}
\end{align}
$(\Fcal, \theta)$ is called \emph{nilpotent} if there exists $s >0$ such that $\theta^{\, s}$ vanishes; we call $s$ the \emph{nilpotency index of $\theta$}. Nilpotent Higgs sheaves form an abelian subcategory $\Higgs^{\mathsf{nilp}}(X)$ of $\Higgs(X)$ which is closed under extensions, subobjects and quotients.

Let $(\Fcal, \theta)$ be a nilpotent Higgs sheaf on $X$ with nilpotency index $s$. For $k\geq 1$, define $\Fcal_k\coloneqq\mathsf{Im}\big(\theta^{\, k}\otimes \mathsf{id}_{\omega_X^{\, \otimes -k}}\big)$. Then $\Fcal_k$ is a subsheaf of $\Fcal$. Finally, set $\Fcal_0\coloneqq\Fcal$. Then there are chains respectively of inclusions and of epimorphisms
\begin{align}
\{0\}=\Fcal_s\subset \Fcal_{s-1}\subset \cdots \Fcal_1\subset \Fcal_0=\Fcal \quad\text{and}\quad \Fcal=\Fcal_0 \twoheadrightarrow \Fcal_1\otimes \omega_X\twoheadrightarrow \cdots \twoheadrightarrow \Fcal_s\otimes \omega_X^{\otimes\, s}=0 \ .
\end{align}
Define for $k\geq 0$
\begin{align}
\Fcal_k'\coloneqq\ker\big(\Fcal_k \twoheadrightarrow \Fcal_{k+1}\otimes \omega_X\big) \quad\text{and}\quad \Fcal_k''\coloneqq\Fcal_k/\Fcal_{k+1} \ .
\end{align}
Therefore we have chains respectively of inclusions and of epimorphisms
\begin{align}
\{0\}=\Fcal_s'\subset\Fcal_{s-1}' \subset \cdots \Fcal_1'\subset \Fcal_0'\quad\text{and}\quad \Fcal_0'' \twoheadrightarrow \Fcal_1''\otimes \omega_X\twoheadrightarrow \cdots \twoheadrightarrow \Fcal_m''\otimes \omega_X^{\otimes\, m}=0 \ .
\end{align}
For $k\geq 1$, set
\begin{align}
\alpha_k\coloneqq\overline{\ker\big(\Fcal_{k-1}''\otimes \omega_X^{\otimes\, k-1}\to \Fcal_{k}''\otimes \omega_X^{\otimes\, k}\big)}\ .
\end{align}
Put $\ell=\deg(\omega_X)$. One has
\begin{align}\label{eq:kernelk}
\overline{\ker\big(\theta^{\, k}\big)}=\sum_{h=1}^k\,\sum_{j=h}^s\, \alpha_j((h-j)\,\ell)
\end{align}
for any $k\geq 1$.

This computation justifies the following definition. Given $\alpha \in (\Z^2)^+$, a finite sequence $\ual=(\alpha_1, \ldots, \alpha_s)$ of elements of $(\Z^2)^+$ is called a \emph{Jordan type of class $\alpha$} if it satisfies
\begin{align}
\alpha=\sum_{i=1}^s \sum_{k=0}^{i-1} \alpha_i(-k\ell)\ ,
\end{align}
where for any $\beta=(r, d)\in (\Z^2)^+$ and $n\in \Z$, we set $\beta(n)\coloneqq(r, d+nr)$. We call $s$ the \emph{length $\ell(\ual)$} of $\ual$. Note that unless $\rk(\alpha)=0$ there are countably many Jordan types of class $\alpha$. Let $J_\alpha$ be the set of all Jordan types of class $\alpha$. 

We may helpfully represent a Jordan type by its associated colored Young diagram as follows (here with $s=4$) 
\begin{align}\label{diag:Young}
\begin{aligned}
\ytableausetup {mathmode, boxsize=3.1em,centertableaux}
\begin{ytableau}
\scriptstyle{\alpha_4}   \\
\scriptstyle\alpha_4(-\ell) & \scriptstyle\alpha_3 \\
\scriptstyle{\alpha_4(-2\ell)} &\scriptstyle \alpha_3(-\ell) & \scriptstyle\alpha_2\\
\scriptstyle{\alpha_4(-3\ell)} &\scriptstyle \alpha_3(-2\ell) & \scriptstyle\alpha_2(-\ell)&\scriptstyle\alpha_1
\end{ytableau}
\end{aligned}
\end{align}

Given a nilpotent Higgs sheaf $(\Fcal, \theta)$ of class $\alpha$, we will say that $(\Fcal, \theta)$ is \emph{of Jordan type $\ual$} if for all $k \geq 1$, the sheaf $\ker\big(\theta^{\, k}\big)$ satisfies \eqref{eq:kernelk}. In the pictorial description of $\ual$, this corresponds to the bottom $k$ rows of the Young tableaux; thus one can think of the Higgs field $\theta$ as the composition of 'going down one box' and 'tensoring by $\omega_X$').

\subsection{Stacks of Higgs sheaves}

Let us denote by $\bHiggsa$ the stack parameterizing Higgs sheaves over $X$ of class $\alpha$. Similarly, let $\bLama$ and $\bHiggs^{\mathsf{ss}}_{\alpha}$ stand for the respectively closed and open substacks of $\bHiggsa$ parametrizing respectively nilpotent and semistable Higgs sheaves\footnote{For a derived point of view to the stack of (semistable) Higgs bundles, see e.g. \cite{art:halpernleistner2016,art:ginzburgrozenblyum2017, art:portasala2019} and references therein. For the GIT approach to the construction of moduli spaces of semistable Higgs bundles, see  \cite{art:nitsure1991, art:simpson1994, art:simpson1994-II}.}. The following is well-known, see e.g. \cite{art:ginzburg2001} and \cite[Section~7]{art:casalainawise2017}.
\begin{theorem}\label{T:StackHiggs} 
The following hold:
\begin{enumerate}\itemsep0.2cm
\item[(a)] The stack $\bHiggsa$ is locally of finite type, of dimension $-2\langle \alpha,\alpha\rangle$ and is canonically isomorphic to the underived cotangent stack $T^\ast\bCoha$ of $\bCoha$:
\begin{align}
\bHiggsa \simeq T^\ast \bCoha\coloneqq\Spec\,\Sym\Hcal^0(\TCoha)\ .
\end{align}
\item[(b)] The stack $\bLama$ is a Lagrangian substack of $\bHiggsa$.
\item[(c)] The stack $\bHiggsa^{\mathsf{ss}}$ is a global quotient stack, and it is smooth if $\alpha=(r,d)$ with $\mathsf{gcd}(r,d)=1$.
\end{enumerate}
\end{theorem}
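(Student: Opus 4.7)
The core of the statement is part (a), from which (b) and (c) follow by combining standard techniques with the atlas description. To prove $\bHiggsa \simeq T^\ast \bCoha$, I would work in the local chart $\bCohaL \simeq [\QaL/\GaL]$ constructed in Section \ref{sec:stackcoherentsheaves}. For any smooth $G$-scheme $Y$ one has the canonical identification $T^\ast [Y/G] \simeq [\mu^{-1}(0)/G]$, with $\mu \colon T^\ast Y \to \mathfrak{g}^\ast$ the moment map for the $G$-action; applied to $\QaL$ this yields $T^\ast \bCohaL \simeq [(\muaL)^{-1}(0)/\GaL]$. On the other hand, the open substack $\bHiggsaL \subset \bHiggsa$ of pairs $(\Fcal,\theta)$ with $\Fcal \in \CohL(X)$ is the $\GaL$-quotient of the scheme parametrizing triples $([\phi],\theta)$ with $[\phi] \in \QaL$ and $\theta$ a Higgs field on the associated sheaf. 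The key step is to identify this scheme with $(\muaL)^{-1}(0)$: via the tautological sequence $0 \to \Kfrak_\alpha^\Lcal \to \Lcal \otimes \Ocal_{\QaL}^{\oplus\,\langle\overline{\Lcal},\alpha\rangle} \to \Efrak_\alpha^\Lcal \to 0$ and Serre duality on $X$, a cotangent vector at $[\phi]$ is an element of $\Hom(\ker\phi,\Fcal)^\ast \simeq \Hom(\Fcal,\ker\phi \otimes \omega_X)$, and chasing the definition of $\delta_\alpha^\Lcal$ shows that the moment-map vanishing condition is equivalent to the composite $\Fcal \to (\Lcal\otimes\omega_X)^{\oplus\,\langle\overline{\Lcal},\alpha\rangle}$ descending through $\phi \otimes \id_{\omega_X}$ to a well-defined Higgs field $\theta \colon \Fcal \to \Fcal\otimes\omega_X$.

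Once the local identification is established, the global isomorphism is obtained by gluing along the transition maps \eqref{eq:transmapscoh}, whose compatibility with the moment maps on the relevant cotangent bundles is controlled by Lemma \ref{L:rall}. The dimension assertion $\dim \bHiggsa = -2\langle\alpha,\alpha\rangle$ then follows from $\dim T^\ast \Xscr = 2\dim\Xscr$ for any smooth Artin stack, and the locally-of-finite-type property descends from $\QaL$.

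For part (b), I would follow the classical Laumon--Ginzburg strategy. Isotropy is obtained by exploiting the $\G_m$-action on $\bHiggsa = T^\ast\bCoha$ that scales the Higgs field: the canonical symplectic form $\omega$ transforms as $\sigma_t^\ast\omega = t\omega$, while $\bLama$ is preserved by this action and every point of $\bLama$ lies in the closure of a $\G_m$-orbit whose limit at $t \to 0$ sits on the zero section, forcing $\omega|_{\bLama}=0$. The matching dimension bound $\dim \bLama = -\langle\alpha,\alpha\rangle$ is proved by stratifying $\bLama$ by Jordan type $\ual \in J_\alpha$: the filtration $\{\Fcal_k\}$ and its graded pieces $\{\Fcal_k''\}$ exhibit each stratum as an iterated affine bundle over a product of stacks $\bCoh_{\beta}$ determined by the colored Young diagram \eqref{diag:Young}, and a routine bookkeeping using \eqref{eq:eulerformhiggs} together with $\dim \bCoh_\beta = -\langle\beta,\beta\rangle$ gives $\dim \bLama^{\ual} = -\langle\alpha,\alpha\rangle = \tfrac12 \dim \bHiggsa$. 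Part (c) follows from Simpson--Nitsure boundedness of semistable Higgs sheaves of fixed class: there is a single line bundle $\Lcal$ such that every semistable Higgs pair of class $\alpha$ has underlying sheaf in $\CohL(X)$, so $\bHiggsa^{\mathsf{ss}}$ embeds as an open substack of the global quotient $[(\muaL)^{-1}(0)/\GaL]$ and is itself a global quotient stack. In the coprime case, semistability implies stability, automorphisms reduce to central scalars, and $\bHiggsa^{\mathsf{ss}}$ becomes a $\G_m$-gerbe over Hitchin's smooth coarse moduli space $\moduliHiggsa$, hence is smooth.

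The genuine obstacle lies in part (a): while the pointwise identification between Higgs fields and cotangent vectors via Serre duality is transparent, promoting it to an isomorphism of $\GaL$-schemes and checking its compatibility with the colimit description of $\bCoha$ along the directed system $\{\bCohaL\}_\Lcal$ demands careful tracking of kernel/cokernel sequences in families. By contrast, parts (b) and (c) are essentially packagings of classical statements once the cotangent identification is in hand.
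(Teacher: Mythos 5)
The paper offers no proof of Theorem \ref{T:StackHiggs} — it cites \cite{art:ginzburg2001} and \cite[Section~7]{art:casalainawise2017} — and your local-chart strategy for part (a) is exactly the one the paper implements in Section \ref{sec:statificationHiggs}, where $\bHiggsaL$ is realized as $[T^\ast_{\GaL}\QaL/\GaL]$ with $T^\ast_{\GaL}\QaL=(\muaL)^{-1}(0)$. However, your key step — identifying $(\muaL)^{-1}(0)$ with the scheme of Higgs pairs — contains a genuine error in the duality bookkeeping. Serre duality on the curve gives $\Hom(\ker\phi,\Fcal)^\ast\simeq\Ext^1(\Fcal,\ker\phi\otimes\omega_X)$, \emph{not} $\Hom(\Fcal,\ker\phi\otimes\omega_X)$; worse, since $\Fcal$ is strongly generated by $\Lcal$ one has $\Hom\big(\Fcal,\Lcal\otimes\omega_X\otimes k^{\langle\overline{\Lcal},\alpha\rangle}\big)\simeq\Ext^1\big(\Lcal\otimes k^{\langle\overline{\Lcal},\alpha\rangle},\Fcal\big)^\ast=0$, so there are no nonzero maps $\Fcal\to(\Lcal\otimes\omega_X)^{\oplus\langle\overline{\Lcal},\alpha\rangle}$ at all, and the proposed ``descent through $\phi\otimes\id_{\omega_X}$'' mechanism is vacuous. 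The correct identification runs through the cokernel of $\delta_\alpha^\Lcal$: applying $\Hom(-,\Fcal)$ to $0\to\ker\phi\to\Lcal\otimes k^{\langle\overline{\Lcal},\alpha\rangle}\to\Fcal\to0$ and using $\Ext^1(\Lcal,\Fcal)=0$ shows that at the point $[\phi]$ one has $\coker(\delta_\alpha^\Lcal)\simeq\Ext^1(\Fcal,\Fcal)$, whence the fiber of $(\muaL)^{-1}(0)$ is $\ker\big((\delta_\alpha^\Lcal)^\ast\big)=\big(\coker\delta_\alpha^\Lcal\big)^\ast\simeq\Ext^1(\Fcal,\Fcal)^\ast\simeq\Hom(\Fcal,\Fcal\otimes\omega_X)$, i.e.\ the space of Higgs fields on $\Fcal$; this pointwise computation must then be promoted to families and glued along \eqref{eq:transmapscoh}, as you indicate.

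Your outlines of (b) and (c) are consistent with the paper's later machinery — the dimension count via Jordan strata is Proposition \ref{prop:MS5.2} combined with \eqref{eq:eulerformhiggs}, and boundedness of semistable Higgs sheaves yields the global-quotient presentation as in the footnote citing \cite[Section~7.7.1]{art:casalainawise2017} — but the isotropy of $\bLama$ is the genuinely hard part of (b). The scaling argument you sketch (limits of $\G_m$-orbits landing on the zero section plus $\sigma_t^\ast\omega=t\omega$) does not by itself force $\omega\vert_{\bLama}=0$ on a singular substack; one needs the careful reduction to the smooth locus of each irreducible component carried out in \cite{art:ginzburg2001}, which is why the paper defers to that reference rather than reproving it.
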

Let us denote by $r_\alpha\colon \bHiggsa \to \bCoha$ the projection map, forgetting the Higgs field.

There exists an action of the multiplicative group $T=\G_m$ on $\bHiggsa$, which at level of families reads as $z \cdot (\Fcal,\theta)\coloneqq(\Fcal,z\theta)$ for $z\in T$ and $(\Fcal, \theta)$ a flat family of Higgs sheaves. Such an action is simply the action scaling the fibers of $r_\alpha\colon T^\ast \bCoha\to \bCoha$. Indeed, $r_\alpha$ is $T$-equivariant with respect to the trivial action of $T$ on $\bCoha$.

It is known that, for $g>1$, the preimage under the projection map $r_\alpha$ of the open substack of vector bundles $\bBuna\subset \bCoha$ is irreducible (cf.\ \cite[Section~2.10]{art:beilinsondrinfeld1991}), and that the irreducible components of $\bHiggsa$ are in fact given by the Zariski closures of the substacks $r_\alpha^{-1}(\bCoha^{\mathsf{tor=d}})$ for $d \geq 0$, where $\bCoha^{\mathsf{tor=d}}$ stands for the substack of $\bCoha$ parametrizing sheaves whose torsion part is of degree $d$. We thank Jochen Heinloth for explanations concerning these facts. We will not consider these irreducible components, and instead focus on the irreducible components of $\bLama$, which we now describe explicitly following \cite{art:bozec2017}.

There is a partition $\bLama=\bigsqcup_{\ual \in J_\alpha} \bLam_{\ual}$ where $\bLam_{\ual}$ is the locally closed substack of $\bLama$ parametrizing nilpotent Higgs sheaves of Jordan type $\ual$.

As shown in the proof of \cite[Proposition~5.2]{art:mozgovoyschiffmann2017}, we have the following.
\begin{proposition}\label{prop:MS5.2} 
For any $\alpha$ and any $\ual=(\alpha_1, \ldots, \alpha_s) \in J_{\alpha}$ the morphism 
\begin{align}
\pi_{\ual}\colon \bLam_{\ual} \to& \prod_{k=1}^s \bCoh_{\alpha_k}\\
(\Fcal, \theta) \mapsto & \Big(\ker\big(\Fcal_{k-1}''\otimes \omega_X^{\otimes\, k-1}\to \Fcal_{k}''\otimes \omega_X^{\otimes\, k}\big)\Big)_k
\end{align}
is an iterated vector bundle stack morphism\footnote{See \cite[Section~3.1]{art:garcia-pradaheinlothschmitt2014} for the definition of vector bundle stack morphisms.}.
\end{proposition}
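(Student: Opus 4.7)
I would proceed by induction on the length $s=\ell(\ual)$. The base case $s=1$ is immediate, since $\ual=(\alpha)$ forces $\theta=0$ and thus $\bLam_{(\alpha)}\simeq\bCoh_\alpha$, with $\pi_{(\alpha)}=\id$. For the inductive step with $\ual=(\alpha_1,\ldots,\alpha_s)$, $s\ge 2$, the crucial observation is that for $(\Fcal,\theta)\in\bLam_\ual$ the subsheaf $\Fcal_1\subset\Fcal$ is always $\theta$-stable (viewing $\Fcal$ as a module over $\Sym(\omega_X^{-1})$, each $\Fcal_k$ is automatically a submodule), and a direct computation gives $(\Fcal_1)_k=\Fcal_{k+1}$, so $(\Fcal_1,\theta|_{\Fcal_1})$ is a nilpotent Higgs sheaf of Jordan type $\ual':=(\alpha_2(-\ell),\ldots,\alpha_s(-\ell))$, of length $s-1$. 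The quotient $\Fcal_0''=\Fcal/\Fcal_1$ has zero induced Higgs field, and the original $\theta$ descends to a surjection $\phi_0\colon\Fcal_0''\twoheadrightarrow\Fcal_1''\otimes\omega_X$ whose kernel has class $\alpha_1$.

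This yields a factorisation $\pi_\ual=(\pi_{\ual'}\times\id_{\bCoh_{\alpha_1}})\circ\rho$, where (up to the canonical twist isomorphism $\bCoh_{\alpha_i(-\ell)}\simeq\bCoh_{\alpha_i}$ obtained by tensoring with $\omega_X$) the morphism
\[
\rho\colon\bLam_\ual\longrightarrow\bLam_{\ual'}\times\bCoh_{\alpha_1},\qquad(\Fcal,\theta)\mapsto\bigl((\Fcal_1,\theta|_{\Fcal_1}),\,\ker(\phi_0)\bigr)
\]
assembles the inductive data. By the inductive hypothesis $\pi_{\ual'}$ is an iterated vector bundle stack morphism, so it suffices to prove the same for $\rho$. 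I would further factorise $\rho=\rho_3\circ\rho_2\circ\rho_1$ according to the three successive choices needed to lift a base point $\bigl((\Fcal_1,\theta_1),K_0\bigr)$ to a point of $\bLam_\ual$: namely, (i) an extension $0\to K_0\to\Fcal_0''\to\Fcal_1''\otimes\omega_X\to 0$, classified by $\Ext^1(\Fcal_1''\otimes\omega_X,K_0)$, which recovers the pair $(\Fcal_0'',\phi_0)$; (ii) an extension $0\to\Fcal_1\to\Fcal\to\Fcal_0''\to 0$, classified by $\Ext^1(\Fcal_0'',\Fcal_1)$, which recovers the sheaf $\Fcal$; and (iii) a lift of $\phi_0$ along the projection $\Fcal_1\otimes\omega_X\twoheadrightarrow\Fcal_1''\otimes\omega_X$, which together with $\theta_1$ determines $\theta$ uniquely and which forms a torsor under $\Hom(\Fcal_0'',\Fcal_2\otimes\omega_X)$. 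Each of these three layers is the fibre of a vector bundle stack morphism, since the corresponding $\Ext^1$-groups (giving the moduli direction) and $\Hom$-groups classifying automorphisms of extensions (giving the stacky direction) fit together into locally free sheaves on the appropriate base.

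The principal technical obstacle lies in justifying this last sentence: one must show, via a cohomology-and-base-change argument, that the relevant $\R p_\ast\R\Hcal om$ sheaves on $\bLam_{\ual'}\times\bCoh_{\alpha_1}$ (and on its intermediate extension stacks) are locally free with formation commuting with base change. The essential inputs are the vanishing of $\Ext^{\geq 2}$ in $\Coh(X)$ (because $X$ is smooth and one-dimensional), the properness of $X$, and the constancy of the numerical classes along the fibres, which is built into the prescription of a Jordan type. Once these checks are in place, the decomposition $\rho=\rho_3\circ\rho_2\circ\rho_1$ exhibits $\rho$ as an iterated vector bundle stack morphism (in the sense of \cite[Section~3.1]{art:garcia-pradaheinlothschmitt2014}), closing the induction.
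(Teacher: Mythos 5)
Note first that the paper does not prove this proposition itself: it is quoted from the proof of \cite[Proposition~5.2]{art:mozgovoyschiffmann2017}, so your argument has to be judged against that reference rather than against an in-text proof. Your overall strategy --- induction on the length $s$, peeling off $(\Fcal_1,\theta|_{\Fcal_1})$ of Jordan type $\ual'=(\alpha_2(-\ell),\ldots,\alpha_s(-\ell))$ together with the surjection $\phi_0\colon \Fcal_0''\twoheadrightarrow\Fcal_1''\otimes\omega_X$, and factoring $\pi_{\ual}$ through $\rho$ --- is sound and is essentially the mechanism underlying the cited proof. The identifications $(\Fcal_1)_k=\Fcal_{k+1}$, the computation of the Jordan type of $\Fcal_1$, and layer (i) (which is just the vector bundle stack morphism $q_{\alpha_1,\beta}$ of Section~3.2, pulled back along $\bLam_{\ual'}\to\bCoh_{\beta}$) are all correct, and the case $s=2$ goes through as you describe.

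The gap is in the claim that layers (ii) and (iii) are independent, i.e.\ that for \emph{every} extension class in $\Ext^1(\Fcal_0'',\Fcal_1)$ chosen in (ii) the set of compatible Higgs fields in (iii) is a (nonempty) torsor under $\Hom(\Fcal_0'',\Fcal_2\otimes\omega_X)$. A map $\theta\colon\Fcal\to\Fcal_1\otimes\omega_X$ restricting to $\theta_1$ on $\Fcal_1$ and inducing $\phi_0$ on $\Fcal_0''$ exists if and only if $(\theta_1)_\ast[\Fcal]=\phi_0^\ast[\Fcal_1\otimes\omega_X]$ in $\Ext^1(\Fcal_0'',\Fcal_2\otimes\omega_X)$; for $s\geq 3$ one has $\Fcal_2\neq 0$ and this group is generically nonzero, so the fibre of your $\rho_1$ is \emph{empty} over a dense locus of the intermediate stack built in steps (i)--(ii). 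A morphism with empty fibres over an open set is not a vector bundle stack morphism, so the decomposition $\rho=\rho_3\circ\rho_2\circ\rho_1$ fails as stated. The statement is nevertheless saved precisely by the ingredient you mention but deploy only for base change: since $\theta_1\colon\Fcal_1\to\Fcal_2\otimes\omega_X$ is surjective with kernel $\Fcal_1'$ and $\Ext^2$ vanishes on a curve, the sequence $\Ext^1(\Fcal_0'',\Fcal_1)\to\Ext^1(\Fcal_0'',\Fcal_2\otimes\omega_X)\to\Ext^2(\Fcal_0'',\Fcal_1')=0$ shows that $(\theta_1)_\ast$ is surjective, hence the locus of admissible extension classes is a torsor under the image of $\Ext^1(\Fcal_0'',\Fcal_1')$, of locally constant dimension. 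You must therefore replace your layers (ii)--(iii) by: (ii$'$) the affine subbundle of admissible extensions, and (iii$'$) the genuine $\Hom(\Fcal_0'',\Fcal_2\otimes\omega_X)$-torsor of lifts over it, and verify local freeness of the corresponding sheaves; only then is $\rho$ exhibited as an iterated vector bundle stack morphism. (A smaller point: a lift of $\phi_0$ to $\Fcal_0''\to\Fcal_1\otimes\omega_X$ together with $\theta_1$ does not determine $\theta$ unless the extension splits; the torsor structure should be phrased directly on the set of admissible $\theta$'s, as above.)
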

\begin{corollary}[{cf.\ \cite[Proposition~2.3 and Corollary~2.4]{art:bozec2017}}]\label{T:Bozec} 
For any $\alpha \in (\Z^2)^+$, the irreducible components of $\bLama$ are the Zariski closures $\overline{\bLam_{\ual}}$ for $\ual \in J_{\alpha}$.
\end{corollary}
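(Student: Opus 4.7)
The plan is to combine the stratification $\bLama = \bigsqcup_{\ual \in J_{\alpha}} \bLam_{\ual}$ with Proposition \ref{prop:MS5.2} and the Lagrangian property from Theorem \ref{T:StackHiggs}(b).

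First, I would show each stratum $\bLam_{\ual}$ is irreducible. By Proposition \ref{prop:MS5.2}, the morphism $\pi_{\ual}\colon \bLam_{\ual} \to \prod_{i=1}^{s} \bCoh_{\alpha_i}$ is an iterated vector bundle stack morphism, so in particular it has irreducible fibers. Since each factor $\bCoh_{\alpha_i}$ is irreducible (Section \ref{sec:stackcoherentsheaves}), the product and therefore $\bLam_{\ual}$ are irreducible.

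Next, I would check that $\dim \bLam_{\ual} = -\langle \alpha, \alpha\rangle$ for every $\ual\in J_\alpha$. From Proposition \ref{prop:MS5.2},
\begin{align}
\dim \bLam_{\ual} \;=\; \sum_{i=1}^{s} \dim \bCoh_{\alpha_i} \;+\; r(\pi_{\ual}) \;=\; -\sum_{i=1}^{s} \langle \alpha_i, \alpha_i\rangle \;+\; r(\pi_{\ual})\ ,
\end{align}
where $r(\pi_{\ual})$ is the total virtual rank of the iterated vector bundle stack. Unwinding the extensions $0 \to \Fcal_k' \to \Fcal_k \to \Fcal_{k+1}\otimes \omega_X \to 0$ used to build $\bLam_{\ual}$, this rank is a sum of dimensions of $\Ext^1$-groups between the graded pieces $\alpha_i(-k\ell)$. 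Combining this with the decomposition $\alpha = \sum_{i=1}^{s}\sum_{k=0}^{i-1} \alpha_i(-k\ell)$ and the bilinearity of the Euler form on $\Ksfnum(X)$, a direct bookkeeping yields $\dim \bLam_{\ual} = -\langle \alpha, \alpha\rangle$, in agreement with the Lagrangian dimension.

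Finally, by Theorem \ref{T:StackHiggs} the stack $\bHiggsa$ is smooth of dimension $-2\langle \alpha, \alpha\rangle$ and $\bLama$ is Lagrangian, hence pure of dimension $-\langle \alpha, \alpha\rangle$. Since $\overline{\bLam_{\ual}}$ is irreducible of this maximal dimension and contained in $\bLama$, it is an irreducible component. Conversely, any irreducible component $Z$ of $\bLama$ meets some stratum $\bLam_{\ual}$ in a subset which is open in $Z$ (the strata are locally closed and a finite type open of $Z$ meets only finitely many strata, so one of these intersections must be dense in $Z$). Hence $Z \subseteq \overline{\bLam_{\ual}}$, and equality follows from irreducibility together with the matching of dimensions. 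Distinct Jordan types yield disjoint strata whose closures are pairwise distinct, for otherwise two disjoint strata would be dense in a common irreducible closed substack.

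The main obstacle is the rank computation for the iterated vector bundle stack $\pi_{\ual}$: one must trace through the extension structure of nilpotent Higgs sheaves of a fixed Jordan type to identify the relevant $\Ext^1$-groups controlling each stage. I would either carry this out directly through the graded pieces $\Fcal_k''$ and the short exact sequences they satisfy, or appeal to the explicit description given in the proof of \cite[Proposition~5.2]{art:mozgovoyschiffmann2017}.
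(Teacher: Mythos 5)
Your argument is correct and is essentially the proof the paper has in mind: the paper states this as a corollary of Proposition \ref{prop:MS5.2} and defers to \cite[Proposition~2.3 and Corollary~2.4]{art:bozec2017}, whose content is precisely your three steps (irreducibility of each stratum from the iterated vector bundle stack structure over $\prod_i\bCoh_{\alpha_i}$, equidimensionality $\dim\bLam_{\ual}=-\langle\alpha,\alpha\rangle$, and the locally finite stratification argument identifying components with stratum closures). The one piece you leave as a sketch, the Euler-form bookkeeping, is the heart of Bozec's computation and does close up as you expect, via the identity $\langle\gamma(-\ell),\beta\rangle=-\langle\beta,\gamma\rangle$ (numerical Serre duality) applied to the graded pieces $\alpha_i(-k\ell)$.
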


Define the following partial order $\preceq$ on $J_{\alpha}$. For $\ual=(\alpha_1, \ldots, \alpha_s), \underline{\beta}=(\beta_1, \ldots, \beta_t)\in J_{\ual}$, we have $ \underline{\beta}\preceq\ual$ if and only if for any $k\geq 1$ the following inequality holds:
\begin{align}\label{eq:preceq}
\sum_{h=1}^k\,\sum_{j=h}^s\, \alpha_j((h-j)\,\ell)\leq \sum_{h=1}^k\,\sum_{j=h}^t\, \beta_j((h-j)\,\ell)\ ,
\end{align}
where the partial order $\leq$ on $(\Z^2)^+$ is the ``standard" order: for $\ual, \underline{\beta}\in (\Z^2)^+$, we have $\underline{\beta}\leq \ual$ if and only if $\ual-\underline{\beta}\in (\Z^2)^+$. Note that in particular $\underline{\beta}\preceq \ual$ implies $\ell(\underline{\beta})\leq \ell(\ual)$. The minimal element in $J_{\alpha}$ for this order is $(\alpha)$. 

By Formula \eqref{eq:kernelk}, one can reinterpret the inequality \eqref{eq:preceq} as an inequality for the classes of kernels of subsequent powers of Higgs fields associated with $\ual$ and $\underline{\beta}$. This observation together with the semicontinuity of the rank and the dimensions of cohomology groups of a coherent sheaf (cf.\ \cite[Example~12.7.2 and Theorem~12.8]{book:hartshorne1977}), imply the following result.
\begin{proposition}\label{prop:preceq}
Let $\alpha \in (\Z^2)^+$. For any $\ual\in J_{\alpha}$,
\begin{align}
\bLam_{\preceq \ual} \coloneqq \bigsqcup_{\underline{\beta}\preceq \ual} \bLam_{\underline{\beta}}
\end{align}
is a closed algebraic substack.
\end{proposition}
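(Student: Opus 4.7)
My plan is to translate the combinatorial inequality $\underline{\beta}\preceq \ual$ into a family of geometric inequalities on the numerical classes $\overline{\ker(\theta^{k})}$, and then prove closedness of each such inequality via semi-continuity. By Formula \eqref{eq:kernelk}, any nilpotent Higgs sheaf $(\Fcal,\theta)\in \bLam_{\underline{\beta}}$ with $\underline{\beta}=(\beta_1,\ldots,\beta_t)$ satisfies
\begin{align}
\overline{\ker(\theta^{k})}=\sum_{h=1}^{k}\sum_{j=h}^{t}\beta_j((h-j)\ell)\qquad \forall \, k\geq 1\ ,
\end{align}
so the condition $\underline{\beta}\preceq \ual$ appearing in \eqref{eq:preceq} is exactly the assertion that $\overline{\ker(\theta^{k})}\geq c_k(\ual):=\sum_{h=1}^{k}\sum_{j=h}^{s}\alpha_j((h-j)\ell)$ for every $k\geq 1$, where $s=\ell(\ual)$. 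Moreover $\underline{\beta}\preceq \ual$ forces $\ell(\underline{\beta})\leq s$, hence $\theta^{s}=0$, which is itself a closed condition on $\bHiggsa$. It therefore suffices to check that each of the finitely many conditions $\{\overline{\ker(\theta^{k})}\geq c_k(\ual)\}$ for $k=1,\ldots,s$ cuts out a closed substack of $\bLama$.

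I would work on the universal Higgs pair $(\Efrak_\alpha,\Theta_\alpha)$ over $\bLama \times X$ and its iterate $\Theta_\alpha^{k}\colon \Efrak_\alpha\to \Efrak_\alpha\otimes \omega_X^{\otimes k}$. Since the partial order on $(\Z^2)^+$ is lexicographic (rank before degree), closedness of $\{\overline{\ker(\theta^{k})}\geq c_k(\ual)\}$ reduces to the following two semi-continuity statements: (i) the fibrewise rank $(\Fcal,\theta)\mapsto \rk(\ker(\theta^{k}))$ is upper semi-continuous, and (ii) on each locally closed stratum where this rank is constant, the degree $(\Fcal,\theta)\mapsto \deg(\ker(\theta^{k}))$ is upper semi-continuous. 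Point (i) is standard: the rank of the image of a morphism between flat families of sheaves on a curve is lower semi-continuous.

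The main obstacle is (ii), which I would handle cohomologically. Working over a quasi-compact open of $\bLama$, the family of kernels $\{\ker(\theta^{k})\}$ is a family of subsheaves of the bounded family $\{\Fcal\}$, hence is itself bounded by Grothendieck's theorem, so one may pick a single line bundle $\mathcal{N}$ on $X$ sufficiently ample that $h^{1}(X,\ker(\theta^{k})\otimes \mathcal{N})=0$ uniformly in $(\Fcal,\theta)$. Riemann--Roch then yields
\begin{align}
h^{0}(X,\ker(\theta^{k})\otimes \mathcal{N})=\deg(\ker(\theta^{k}))+\rk(\ker(\theta^{k}))\bigl(\deg \mathcal{N}+1-g\bigr)\ .
\end{align}
On the other hand, $h^{0}(X,\ker(\theta^{k})\otimes \mathcal{N})$ equals the fibrewise kernel dimension of the induced morphism between the two locally free direct images $\pi_{\ast}(\Efrak_\alpha\otimes \mathcal{N})$ and $\pi_{\ast}(\Efrak_\alpha\otimes \omega_X^{\otimes k}\otimes \mathcal{N})$ (where $\pi\colon \bLama \times X \to \bLama$ is the projection), so it is upper semi-continuous by Hartshorne's Theorem~12.8 \cite{book:hartshorne1977}. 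Combined with (i), this gives (ii) and closes the argument. The delicate point throughout is the uniform choice of $\mathcal{N}$, which is what the boundedness of the family of subsheaves is designed to provide.
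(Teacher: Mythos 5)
Your proposal is correct and follows essentially the same route as the paper, which proves the proposition in one sentence by reinterpreting $\underline{\beta}\preceq\ual$ via Formula \eqref{eq:kernelk} as inequalities on the classes $\overline{\ker(\theta^{k})}$ and invoking semicontinuity of the rank and of cohomology dimensions (Hartshorne, Example~12.7.2 and Theorem~12.8). You merely supply the details the paper leaves implicit, in particular the uniform ample twist $\mathcal{N}$ converting degree semicontinuity into $h^{0}$-semicontinuity.
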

An important example of an irreducible component of $\bLama$ is the \emph{zero section} $\bLam_{(\alpha)}= \bCoha \subset \bLama$, obtained for the unique Jordan type of length $s=1$. It is a closed substack by the previous proposition.

\begin{remark}
By Proposition~\ref{prop:MS5.2} and Theorem~\ref{T:Heinloth}, each strata $\bLam_{\ual}$ is cohomologically pure (indeed, this property is preserved under vector bundle stack morphisms). But then $\bLam$ is itself pure since it has a locally finite partition into pure strata. Define
\begin{align}
\bLam_{\prec \ual} \coloneqq \bigsqcup_{\underline{\beta}\prec \ual} \bLam_{\underline{\beta}} \ .
\end{align}
Then $\bLam_{\preceq \ual}=\bLam_{\prec \ual}\sqcup \bLam_{\ual}$. Thanks to the purity of $\bLam_{\ual}$, one can show that also $\bLam_{\preceq \ual}$ and $\bLam_{\prec \ual}$ are pure. In addition, there are short exact sequences
\begin{align}\label{eq:filtration}
0\to H_k(\bLam_{\prec \ual})\to H_k(\bLam_{\preceq \ual}) \to H_k(\bLam_{\ual})\to 0\ .
\end{align}
Thus there is an induced filtration of $H_\ast(\bLama)$, whose associated graded is 
\begin{align}\label{eq:gradedcoh}
\mathsf{gr}(H_\ast(\bLama))=\bigoplus_{\ual \in J_{\alpha}} H_\ast(\bLam_{\ual}) \simeq \bigoplus_{\ual \in J_{\alpha}} H_\ast(\bCoh_{\alpha_1}) \otimes \cdots \otimes H_\ast(\bCoh_{\alpha_{\ell(\ual)}})\ . 
\end{align}
Such results hold in the $T$-equivariant setting as well.
\end{remark}

\subsection{Local charts of the stack of Higgs sheaves}\label{sec:statificationHiggs}

Let us now proceed with the description of the \emph{local charts} of the stacks $\bHiggsa$. Let $\Lcal$ be a line bundle on $X$. By \cite[Proposition~14.2.4]{book:laumonmoretbailly2000}, we have a cartesian diagram
\begin{align}
\begin{aligned}
  \begin{tikzpicture}[xscale=2.3,yscale=-1]
  \node (A0_0) at (0, 0) {$\mathsf{Spec}\,\mathsf{Sym}\Hcal^0(\TCohaL)$};
\node (A2_0) at (2, 0) {$\bHiggsa$};
\node (A0_2) at (0, 2) {$\bCohaL$};
\node (A2_2) at (2, 2) {$\bCoha$};
\node(A1_1) at (1,1) {$\square$};
    \path (A0_0) edge [->]node [auto] {$\scriptstyle{v_\alpha^\Lcal}$} (A2_0);
   \path (A0_0) edge [->]node [auto] {$\scriptstyle{r_\alpha^\Lcal}$} (A0_2);
   \path (A2_0) edge [->]node [auto] {$\scriptstyle{r_\alpha}$} (A2_2);
    \path (A0_2) edge [->]node [auto] {$\scriptstyle{u_\alpha^\Lcal}$} (A2_2); 
  \end{tikzpicture}
\end{aligned}
\end{align}
where the map $u_\alpha^\Lcal$ is an open embedding, and hence also $v_\alpha^\Lcal$. Define
\begin{align}
\bHiggsaL \coloneqq\mathsf{Spec}\,\mathsf{Sym}\Hcal^0(\TCohaL)\ .
\end{align}
Thus $\bHiggsaL$ is the algebraic stack parameterizing Higgs sheaves on $X$ of class $\alpha$ such that the underlying coherent sheaf is strongly generated by $\Lcal$. Such a stack can be realized as a global quotient stack. Indeed, by the explicit description \eqref{eq:cotancomplex} of $\TCohaL$, we get
\begin{align}
\bHiggsaL\simeq [T^\ast_{\GaL}\QaL/\GaL]\ , 
\end{align}
where $T^\ast_{\GaL}\QaL\coloneqq\big(\muaL\big)^{-1}(0)$ and $\muaL$ is the composition
\begin{align}
\mathsf{Spec}\, \mathsf{Sym} \Tcal_{\QaL}=T^\ast \QaL\to \mathsf{Spec}\, \mathsf{Sym}\, (\gaL\otimes \Ocal_{\QaL}) = \big(\gaL\big)^\ast\times \QaL \to \big(\gaL\big)^\ast \ .
\end{align}

Next, let us realize $\bHiggsa$ as an ind-algebraic stack. By construction (cf.\ proof of Lemma \ref{L:rall}), there are dual pairs of short exact sequences
\begin{align}
0 \to & \gaLp \to T_{([\phi],u)}\RaLLp \to \Hom(\ker(\phi),\Fcal) \to 0\ ,\\[2pt]
0 \to & \gaL \to T_{([\psi],v)}\RaLpL \to \Hom(\ker(\psi),\Fcal) \to 0 \ ,
\end{align}
and 
\begin{align}
0 \to &  \Hom(\ker(\phi),\Fcal)^\ast \to T^\ast_{([\phi],u)}\RaLLp \stackrel{\mu'}{\longrightarrow}  (\gaLp)^\ast \to 0\ , \\[2pt]
0 \to &  \Hom(\ker(\psi),\Fcal)^\ast \to T^\ast_{([\psi],v)}\RaLpL \stackrel{\mu}{\longrightarrow} (\gaL)^\ast \to 0
\end{align}
for any $([\phi],u) \in \RaLLp, ([\psi],v) \in \RaLpL$. Recall that by Lemma~\ref{L:rall} we have a canonical isomorphism $\RaLLp \simeq \RaLpL$ as $\GaL \times \GaLp$-schemes, which sends $([\phi], u)$ to $([\overline{u}], \overline{\phi})$. The moment map relative to the Hamiltonian $\GaL \times \GaLp$-action on $T^\ast\RaLLp$ is given by
\begin{align}
\mu_\alpha^{\Lcal,\Lcal'}\coloneqq\mu'\oplus \mu \colon T^\ast_{(\phi,u)}\RaLLp \to (\gaLp)^\ast  \oplus (\gaL)^\ast\ .
\end{align}
The above complex is quasi-isomorphic to both
\begin{align}
\big[T^\ast_{[\psi]}\QaLLp \stackrel{\mu_\alpha^\Lcal}{\to} (\gaL)^\ast\big] \simeq \big[\Hom(\ker(\phi),\Fcal) \to (\gaL)^\ast\big]\ ,\\[2pt]
\big[T^\ast_{[\phi]}\QaLp \stackrel{\mu_\alpha^{\Lcal'}}{\to} (\gaLp)^\ast\big] \simeq \big[\Hom(\ker(\psi),\Fcal) \to (\gaLp)^\ast\big]\ .
\end{align}
It follows that the projection maps $T^\ast_{\GaL \times \GaLp}\RaLLp \to T^\ast_{\GaL}\QaLLp$ and $T^\ast_{\GaL \times \GaLp}\RaLLp \to T^\ast_{\GaLp}\QaLp$ are respectively principal $\GaLp$ and $\GaL$-bundles. Hence the open embedding $j_{\Lcal, \Lcal', \alpha}\colon \bCohaLp \hookrightarrow \bCohaL$ lifts to an open embedding $h_{\Lcal, \Lcal', \alpha}\colon \bHiggsaLp\hookrightarrow \bHiggsaL$ obtained as a composition
\begin{align}\label{eq:haLLp}
\begin{aligned}
\bHiggsaLp=&[T^\ast_{\GaLp}\QaLp/\GaLp]\simeq [T^\ast_{\GaL\times \GaLp} \RaLLp/\GaL\times \GaLp]\\[2pt]
&\simeq  [T^\ast_{\GaL\times \GaLp} \RaLpL/\GaL\times \GaLp]\simeq [T^\ast_{\GaL}\QaLLp/\GaL]\hookrightarrow \bHiggsaL \ ,
\end{aligned}
\end{align}
where the last morphism is obtained by applying base change with respect to $j_{\Lcal, \Lcal', \alpha}$ and \cite[Proposition~14.2.4]{book:laumonmoretbailly2000}. Thus, we obtain a directed system $\langle \bHiggsaL, h_{\Lcal, \Lcal', \alpha}\rangle$ and thus we get
\begin{align}
\bHiggsa \simeq \lim_{\genfrac{}{}{0pt}{}{\to}{\Lcal}} \, \bHiggsaL \coloneqq\lim_{\genfrac{}{}{0pt}{}{\to}{\mathfrak{Pic}(X)}} \, \bHiggsaL \ .
\end{align}

\bigskip\section{Definition of the cohomological Hall algebras}\label{sec:COHAHiggs}

The present section is devoted to the construction of $A$-homological Hall algebras associated with the 2-Calabi-Yau category $\Higgs(X)$, and their variants for the category of nilpotent Higgs sheaves and the category of semistable Higgs bundles of fixed slope. 

\subsection{Borel-Moore homology theories}\label{sec:BM}

Although most of our results here concern the case of the cohomological Hall algebra for Borel-Moore homology or Chow groups, our constructions make sense for an arbitrary oriented Borel-Moore homology theory (OBM). Let $\mathsf{Sch}/ k$ be the category of separated $k$-schemes of finite type. Recall that an OBM theory on $\mathsf{Sch}/ k$ is the data of 
\begin{enumerate}\itemsep0.2cm
\item[(a)] for every $k$-scheme $X$, a graded abelian group $A_\ast(X)$;
\item[(b)] for every projective morphism $f\colon X \to Y$, a homomorphism $f_\ast\colon A_\ast(X) \to A_\ast(Y)$;
\item[(c)] for every locally complete intersection (l.c.i.) morphism $g\colon X \to Y$ of relative dimension $d$, a homomorphism $g^\ast\colon A_\ast(Y) \to A_{\ast+d}(X)$;
\item[(d)] an element $\mathbf{1} \in A_0(\pt)$ and for every pair $(X,Y)$ of $k$-schemes, a bilinear pairing $\times \colon A_\ast(X) \otimes A_\ast(Y) \to A_\ast(X \times Y)$ which is associative, commutative and for which $\mathbf{1}$ is a unit,
\end{enumerate}
satisfying a certain number of natural axioms, see \cite{book:levinemorel2007}. Of particular importance to us will be the existence of \emph{refined Gysin pullback morphisms} in the following context: let $f\colon Y \to X$ be an l.c.i morphism and let $g \colon Z \to X$ be an arbitrary morphism; then there exists a pullback morphism 
\begin{align}
f^!=f^!_g\colon A_\ast(Z) \to A_\ast(Z \times_X Y)
\end{align}
which coincides with the usual pullback morphism $f^\ast$ if $Z=X$ and $g=\mathsf{id}_X$.

In the following, we shall restrict ourselves to those OBMs which are \emph{free}, see e.g. \cite[Chapter~4]{book:levinemorel2007}. Examples of free OBM theories include K-theory and Chow groups. Although usual Borel-Moore homology is not \emph{per se} an OBM theory because of the presence of odd degree classes, it satisfies all the important properties and all of our constructions will be valid in that situation as well. We refer to the papers \cite[Appendix~A]{art:minets2018} and \cite[Section~1.1]{art:yangzhao2014} which contain all the properties of OBM theories which we will need here.

Let $G$ be a reductive algebraic group, a $G$-equivariant version of OBMs has been defined in \cite{art:hellermalagonlopez2013}, and therefore there exists a theory of OBMs for global quotient stacks. In  \cite{art:kresch1999}, Kresch define a theory of Chow groups for algebraic stacks, locally of finite type (and stratified by global quotient stacks).

Since, we are considering $\bHiggsa$ as an ind-algebraic stack, we give here the following definition. Let $\Xscr$ be an algebraic stack such that $\displaystyle \Xscr\simeq \lim_{\to}\, \Uscr_i$, where the limit is taken with respect to the directed system $(\Uscr_i, \jmath_{i\leq i'}\colon \Uscr_i\to \Uscr_{i'})$, formed by all open substacks $\Uscr_i$ of $\Xscr$ of finite type and open immersions $\jmath_{i\leq i'}$. We define
\begin{align}
A_\ast(\Xscr):=\lim_{\genfrac{}{}{0pt}{}{\longleftarrow}{\Uscr_i}}\, A_\ast(\Uscr_i)\ .
\end{align}
This graded abelian group tends to be very large and untractable. We also define a smaller graded abelian group $A^0_\ast(\Xscr) \subset A_\ast(\Xscr)$ as follows. First, we say that an algebraic stack $\Zscr$, such that $\displaystyle \Zscr\simeq \lim_{\to}\, \Wscr_i$, is \emph{admissible} if $\mathsf{codim}(\Zscr\setminus \Wscr_i)\to \infty$ if $i\to \infty$. Let $A^0_\ast(\Xscr)$ be the subgroup of $A_\ast(\Xscr)$ consisting of classes supported on an admissible closed substack, i.e., lying in the image of the direct image morphism $i_\ast\colon A_\ast(\Zscr) \to A_\ast(\Xscr)$ for $i\colon \Zscr\hookrightarrow \Xscr$ an admissible closed substack. The subgroups $A^0_\ast$ are preserved under the standard operations $(-)_\ast,(-)^\ast, (-)^!$ with respect to morphisms which are of finite relative dimension.

\subsection{Cohomological Hall algebra of the stacks of coherent sheaves}\label{sec:COHAcoh}

Let us first define the cohomological Hall algebra attached to the simpler $1$-dimensional category $\Coh(X)$. For this, we need to introduce stacks classifying extensions between coherent sheaves. For $\alpha,\beta \in (\Z^2)^+$, let $\bCohtildeab$ be the stack parameterizing inclusions $\Gcal\subset \Fcal$, where $\Gcal$ is a flat family of coherent sheaves of class $\beta$ and $\Fcal$ is a flat family of coherent sheaves of class $\alpha+\beta$. It is a smooth irreducible algebraic stack, locally of finite type, of dimension $-\langle \alpha,\alpha\rangle -\langle \beta,\beta\rangle -\langle \alpha,\beta\rangle$, equipped with a pair of morphisms
\begin{align}\label{eq:convoldiagramcoh}
\begin{aligned}
  \begin{tikzpicture}[xscale=3,yscale=-1]
  \node (A0_0) at (0, 0) {$\bCoha \times\bCohb$};
\node (A1_0) at (1, 0) {$\bCohtildeab$};
\node (A2_0) at (2, 0) {$\bCohab$};
    \path (A1_0) edge [->]node [above] {$\scriptstyle{q_{\alpha, \beta}}$} (A0_0);
   \path (A1_0) edge [->]node [above] {$\scriptstyle{p_{\alpha,\beta}}$} (A2_0);
  \end{tikzpicture}
\end{aligned}
\end{align}
defined at the level of flat families by $q_{\alpha,\beta}(\Gcal\subset\Fcal)=(\Fcal/\Gcal,\Gcal)$ and $p_{\alpha,\beta}(\Gcal\subset\Fcal)=\Fcal$. The map
$q_{\alpha,\beta}$ is a vector bundle stack morphism, while $p_{\alpha,\beta}$ is a proper representable morphism (see e.g. \cite[Section~3.1]{art:garcia-pradaheinlothschmitt2014}). The stack $\bCohtildeab$ is equipped with a short exact sequence of \emph{tautological sheaves} 
\begin{align}
0\to \widetilde \Efrak_\beta\to \widetilde \Efrak_{\alpha+\beta}\to \widetilde \Efrak_\alpha \to 0 \ ,
\end{align}
where $\widetilde \Efrak_\alpha, \widetilde \Efrak_{\alpha+\beta}, \widetilde \Efrak_\beta\in \Coh\big(\bCohtildeab\times X
\big)$. Moreover, the following relations hold between tautological sheaves
\begin{align}
\big(q_{\alpha, \beta}\big)^\ast \mathsf{pr}_\alpha^\ast\Efrak_\alpha \simeq \widetilde \Efrak_\alpha \ , \quad \big(q_{\alpha, \beta}\big)^\ast \mathsf{pr}_\alpha^\ast \Efrak_\beta \simeq \widetilde \Efrak_\beta \quad\text{and}\quad \big(p_{\alpha,\beta}\big)^\ast \Efrak_{\alpha+\beta} = \widetilde \Efrak_{\alpha+\beta}\ ,
\end{align}
where $\mathsf{pr}_\alpha^\ast, \mathsf{pr}_\beta^\ast$ are the projections from $\bCoha\times \bCohb$ to the two factors respectively.
\begin{definition} 
Let $A$ be either Borel-Moore homology or a free oriented Borel-Moore homology theory. The \emph{$A$-homological Hall algebra} of the category $\Coh(X)$
is the abelian group
\begin{align}
\COHA_{\Coh(X)}\coloneqq\bigoplus_{\alpha \in (\Z^2)^+} A_\ast(\bCoha)
\end{align}
equipped with the multiplication
\begin{align}
A_\ast(\bCoha) \otimes A_\ast(\bCohb) \to A_*(\bCohab)\ , \quad c_1 \otimes c_2 \mapsto (p_{\alpha,\beta})_\ast (q_{\alpha,\beta})^\ast(c_1 \boxtimes c_2)\ .
\end{align}
\end{definition}
Since $\bCoha, \bCohb$ and $\bCohtildeab$ are smooth, it is easy to see that $\COHA_{\Coh(X)}$ is a graded associative algebra (see \cite{art:schiffmannvasserot2018} where the case of $A=H_\ast$ is considered).  Although this is already a very interesting and still mysterious algebra, the aim of this paper is to study its \emph{two}-dimensional counterpart, defined using the moduli stacks of Higgs sheaves on $X$.

\subsubsection{Local presentation of the diagram \eqref{eq:convoldiagramcoh}}

To unburden the notation, let us set $\VV_{\Lcal,\gamma}=k^{\, \langle \Lcal, \gamma \rangle}$ for any $\Lcal$ and any $\gamma \in (\Z^2)^+$. Let us also fix an isomorphism $\VV_{\Lcal,\,\alpha+\beta} \simeq \VV_{\Lcal,\,\alpha} \oplus \VV_{\Lcal,\, \beta}$. 

\begin{lemma}\label{lem:standard} 
Let $\Lcal'$ be a line bundle on $X$ and let $\alpha,\beta \in (\Z^2)^+$. Then there exists $N \ll 0$ such that, for any line bundle $\Lcal$ with $\deg(\Lcal) \leq N$, for any scheme $S$, and any $(\Gcal\subset \Fcal) \in \bCohtildeab(S)$ with $\Fcal \in \bCohabLp(S)$ we have $(\Fcal/\Gcal, \Gcal) \in \bCohaL(S)\times \bCohbL(S)$.
\end{lemma}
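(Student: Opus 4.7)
The plan combines two standard ingredients: openness of the strong-generation condition and boundedness of the family of pairs $(\Gcal,\Fcal/\Gcal)$ arising when $\Fcal$ is strongly generated by $\Lcal'$. First, for any flat family $\Hcal$ of coherent sheaves of class $\gamma$ on $X$ parametrised by a scheme $T$, the locus of points $t\in T$ at which $\Hcal_t$ is strongly generated by a given $\Lcal$ is open: vanishing of $\Ext^1(\Lcal,\Hcal_t)=H^1(X,\Hcal_t\otimes\Lcal^{-1})$ is open by upper semicontinuity, and surjectivity of the evaluation map $\Hom(\Lcal,\Hcal_t)\otimes\Lcal\to \Hcal_t$ is open because its cokernel is upper semicontinuous. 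Since the conclusion of the lemma can be checked on geometric fibres of $S$, it therefore suffices to produce an integer $N$, depending only on $\alpha,\beta,\Lcal'$, such that for every $\Lcal$ with $\deg\Lcal\le N$, every $\Fcal\in\CohLp(X)$ of class $\alpha+\beta$, and every subsheaf $\Gcal\subset\Fcal$ of class $\beta$, both $\Gcal$ and $\Fcal/\Gcal$ are strongly generated by $\Lcal$.

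Next I would realise these pairs as the $k$-points of a scheme of finite type. Since any $\Fcal\in\CohLp(X)$ of class $\alpha+\beta$ is a quotient of $\Lcal'\otimes\VV_{\Lcal',\,\alpha+\beta}$, it corresponds to a $k$-point of $\QabLp$; the choice of a subsheaf $\Gcal\subset\Fcal$ of class $\beta$ is then parametrised by a relative Quot scheme $T\to\QabLp$, itself of finite type over $k$. Over $T$ one has universal $T$-flat families $\widetilde\Gcal$ and $\widetilde\Fcal/\widetilde\Gcal$ on $T\times X$. This is the crux of the reduction, and the place where the hypothesis $\Fcal\in\CohLp(X)$ is really used: it is what forces the family of candidate pairs to be bounded.

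Finally, I would apply the classical uniform Serre vanishing/global generation theorem to these families: since $T$ is of finite type, there exists $N$ such that whenever $\deg\Lcal\le N$ and $t\in T$ is a geometric point, one has $H^1(X,\widetilde\Gcal_t\otimes\Lcal^{-1})=0$ together with surjectivity of the evaluation map $H^0(X,\widetilde\Gcal_t\otimes\Lcal^{-1})\otimes\Lcal\to \widetilde\Gcal_t$, and the analogous statements for $\widetilde\Fcal/\widetilde\Gcal$. Every candidate pair $(\Gcal,\Fcal/\Gcal)$ as above appears as such a fibre, so it is strongly generated by $\Lcal$, and combined with the openness step this yields the lemma. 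The only real point is that the $N$ produced by Serre vanishing depends only on $\alpha,\beta,\Lcal'$ and not on the particular pair, but this is automatic from the bounded-family setup.
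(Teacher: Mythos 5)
Your argument is correct and is essentially the paper's: the proof given there reduces the statement to the finite-typeness (boundedness) of the relevant family of pairs $(\Gcal\subset\Fcal)$ with $\Fcal\in\CohLp(X)$, together with the exhaustion $\bCoh_\gamma\simeq\varinjlim_{\Lcal}\bCoh_\gamma^{>\Lcal}$, which is exactly your combination of the relative Quot scheme over $\QabLp$ and uniform Serre vanishing/global generation over a finite-type base. You have merely spelled out the details that the paper delegates to the reference \cite[Lemma~2.3]{art:schiffmann2004}.
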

\begin{proof} 
This comes from the fact that $\bCoh^{>\Lcal'}_{\alpha+\beta}$ is a global quotient stack, in particular of finite type, and $\displaystyle\bCoh_{\gamma}\simeq \lim_{\genfrac{}{}{0pt}{}{\to}{\Lcal}}\, \bCoh^{>\Lcal}_{\gamma}$ for any $\gamma\in  (\Z^2)^+$. See e.g. \cite[Lemma~2.3]{art:schiffmann2004} for details.
\end{proof}
As a corollary, we see that for any $\alpha,\beta,\Lcal'$ as above, and any $\Lcal$ of sufficiently negative degree we have
\begin{align}
p_{\alpha,\beta}^{-1}(\bCohabLp) \subset q_{\alpha,\beta}^{-1}(\bCohaL \times \bCohbL)\ .
\end{align}

Now we provide a presentation of $\bCohtildeab$ as an ind-algebraic stack. Define the subscheme of $\QtildeabL$ consisting of points $\big[\phi \colon \Lcal\otimes\VV_{\Lcal,\, \alpha+ \beta}\twoheadrightarrow \Fcal\big]$ for which $\phi(\Lcal \otimes \VV_{\Lcal,\,\beta})$ is strongly generated by $\Lcal$, and
\begin{align}
\QtildeabLLp\coloneqq\QtildeabL \cap \QabLLp \subset \QabL\ .
\end{align}
Note that $\QtildeabL$ may not be closed in $\QabL$ (because of being strongly generated is an open condition), but by Lemma \ref{lem:standard} $\QtildeabLLp$ is closed in $\QabLLp$. Let $\PabL\subset\GabL$ be the group consisting of $g\in \GabL$ such that $g(\VV_{\Lcal, \, \beta})=\VV_{\Lcal, \, \beta}$. Define the global quotient stack
\begin{align}
\bCohtildeabLLp\simeq \big[\QtildeabLLp/\PabL\big]\ .
\end{align}
It is the algebraic stack parameterizing extensions $0\to \Gcal\to \Fcal\to \Ecal \to 0$ of coherent sheaves on $X$ with $\overline{\Gcal}=\beta$, $\overline{\Ecal}=\alpha$ and $\Fcal$ strongly generated by $\Lcal'$, while the sheaves $\Hcal$ and $\Gcal$ are strongly generated by $\Lcal$. Moreover, we have an open embedding $u_{\alpha,\beta}^{\Lcal, \Lcal'}\colon \bCohtildeabLLp\to \bCohtildeab$.

Let $\alpha, \beta, \Lcal'$ as above and consider two line bundles $\Lcal_1, \Lcal_2$ of degrees less than or equal to $N$. By Lemma \ref{lem:standard}, we have an isomorphism $\bCohtildeabLoneLp\simeq \bCohtildeabLtwoLp$. An explicit realization of such an isomorphism can be given by following the same reasoning as in Section \ref{sec:stackcoherentsheaves}. 

Let $\RtildeabLoneLtwoLp$ be the scheme representing the contravariant functor $\mathsf{Aff}/k \to (\mathsf{Sets})$ which assigns to an affine $k$-variety $S$ the set of pairs $([\phi], u)$, where $[\phi]\in \QtildeabLoneLp(S)$, and $u\colon \VV_{\Lcal_2,\,\alpha+\beta}\boxtimes \Ocal_S\simeq \Hom(\Lcal_2\boxtimes\Ocal_S,\Fcal)$ is an isomorphism such that $u(\VV_{\Lcal_2, \,\beta}\boxtimes\Ocal_S)=\Hom(\Lcal_2\boxtimes\Ocal_S,\Gcal)$. One can show that $\RtildeabLoneLtwoLp$ is a $\PabLtwo$-principal bundle over $\QtildeabLoneLp$ and a $\PabLone$-principal bundle over $\QtildeabLtwoLp$. Therefore, at the level of global quotient stacks we have
\begin{align}
\bCohtildeabLoneLp=[\QtildeabLoneLp/\PabLone]\simeq [\RtildeabLoneLtwoLp/\PabLone\times \PabLtwo]\simeq [\QtildeabLtwoLp/\PabLtwo]=\bCohtildeabLtwoLp\ .
\end{align}
On the other hand, given two line bundles $\Lcal_1$ and $\Lcal_2$ such that $\Lcal_1\preceq \Lcal_2$ and given $\Lcal$ of sufficiently negative degree (less than the $N$'s of Lemma \ref{lem:standard} both for the triple $\alpha, \beta, \Lcal_1$ and the triple $\alpha, \beta, \Lcal_2$), we have an open embedding $\jmath_{\alpha,\beta}^{\Lcal, (\Lcal_1, \Lcal_2)}\colon \bCohtildeabLLone\to \bCohtildeabLLtwo$. We get a direct system $\langle \bCohtildeabLLp, \jmath_{\alpha,\beta}^{\Lcal, (\Lcal_1, \Lcal_2)}\rangle$ and therefore
\begin{align}
\bCohtildeab \simeq \lim_{\genfrac{}{}{0pt}{}{\to}{\Lcal'}} \, \bCohtildeabLLp :=\lim_{\genfrac{}{}{0pt}{}{\to}{\widetilde{\mathfrak{Pic}}(X)}} \, \bCohtildeabLLp \ .
\end{align} 
There are maps 
\begin{align}\label{eq:overlinemaps}
\overline{p}_{\alpha,\beta}^{\, \Lcal, \Lcal'}\colon \QtildeabLLp \hookrightarrow \QabLLp\qquad\text{and}\qquad\overline{q}_{\alpha,\beta}^{\, \Lcal, \Lcal'}\colon\QtildeabL \to \QaL \times \QbL\ ,
\end{align}
which are respectively a proper morphism and an affine fibration of rank $\langle \Lcal,\beta\rangle \langle \Lcal,\alpha\rangle-\langle \beta,\alpha\rangle$. Moreover, $H_{\alpha, \beta}^{\Lcal}\coloneqq\GaL \times \GbL$ acts on $\QaL \times \QbL$, $\PabL$ acts on $\QtildeabLLp$ while
$\GabL$ acts on $\QabLLp$. Note that $H_{\alpha, \beta}^{\Lcal} \subset \PabL \subset \GabL$ is the inclusion of a Levi factor of a parabolic subgroup of $\GabL$, and that $\overline{p}_{\alpha,\beta}^{\, \Lcal, \Lcal'}, \overline{q}_{\alpha,\beta}^{\, \Lcal, \Lcal'}$ are $\PabL$-equivariant (with respect to the canonical maps $\PabL \to H_{\alpha, \beta}^{\Lcal}$ and $\PabL \to \GabL$). Thus, we have induced morphisms at the stacky level
\begin{align}
p_{\alpha,\beta}^{\Lcal, \Lcal'}\colon \bCohtildeabLLp \to \bCohabLp \quad\text{and}\quad q_{\alpha,\beta}^{\Lcal, \Lcal'}\colon \bCohtildeabLLp \to \bCohaL \times \bCohbL\ ,
\end{align}
which fit into, respectively, the cartesian diagram
\begin{align}
\begin{aligned}
  \begin{tikzpicture}[xscale=2.3,yscale=-1]
  \node (A0_0) at (0, 0) {$\bCohtildeabLLp$};
\node (A2_0) at (2, 0) {$\bCohabLp$};
\node (A0_2) at (0, 2) {$\bCohtildeab$};
\node (A2_2) at (2, 2) {$\bCohab$};
\node(A1_1) at (1,1) {$$};
    \path (A0_0) edge [->]node [auto] {$\scriptstyle{p_{\alpha,\beta}^{\Lcal, \Lcal'}}$} (A2_0);
   \path (A0_0) edge [->]node [auto] {$ $} (A0_2);
   \path (A2_0) edge [->]node [auto] {$ $} (A2_2);
    \path (A0_2) edge [->]node [auto] {$\scriptstyle{p_{\alpha,\beta}}$} (A2_2); 
  \end{tikzpicture}
\end{aligned}
\end{align}
and the commutative diagram
\begin{align}
\begin{aligned}
  \begin{tikzpicture}[xscale=2.3,yscale=-1]
  \node (A0_0) at (0, 0) {$\bCohtildeabLLp$};
\node (A2_0) at (2, 0) {$\bCohaL\times \bCohbL$};
\node (A0_2) at (0, 2) {$\bCohtildeab$};
\node (A2_2) at (2, 2) {$\bCoha\times\bCohb$};
\node(A1_1) at (1,1) {$$};
    \path (A0_0) edge [->]node [auto] {$\scriptstyle{q_{\alpha,\beta}^{\Lcal, \Lcal'}}$} (A2_0);
   \path (A0_0) edge [->]node [auto] {$ $} (A0_2);
   \path (A2_0) edge [->]node [auto] {$ $} (A2_2);
    \path (A0_2) edge [->]node [auto] {$\scriptstyle{q_{\alpha,\beta}}$} (A2_2); 
  \end{tikzpicture}
\end{aligned}
\end{align}

\subsection{Cohomological Hall algebra of the stack of Higgs sheaves}


There are again maps
\begin{align}\label{eq:convoldiagramhiggs}
\begin{aligned}
  \begin{tikzpicture}[xscale=3,yscale=-1]
  \node (A0_0) at (0, 0) {$\bHiggsa \times\bHiggsb$};
\node (A1_0) at (1, 0) {$\bHiggstildeab$};
\node (A2_0) at (2, 0) {$\bHiggsab$};
    \path (A1_0) edge [->]node [above] {$\scriptstyle{q_{\alpha, \beta}}$} (A0_0);
   \path (A1_0) edge [->]node [above] {$\scriptstyle{p_{\alpha,\beta}}$} (A2_0);
  \end{tikzpicture}
\end{aligned}
\end{align}
defined by $q_{\alpha,\beta}(\underline{\Gcal} \subset \underline{\Fcal})=(\underline{\Fcal}/\underline{\Gcal},\underline{\Gcal})$ and $p_{\alpha,\beta}(\underline{\Gcal} \subset \underline{\Fcal})=\underline{\Fcal}$. We use the same notation as in Section \ref{sec:COHAcoh}, hoping no confusion may arise. 

The map $p_{\alpha,\beta}$ is still a proper representable morphism, but the map $q_{\alpha,\beta}$ is very far from being a vector bundle stack morphism, or even an l.c.i. morphism hence it is not possible to define directly a pullback morphism $q_{\alpha,\beta}^\ast\colon A_\ast(\bHiggsa \times \bHiggsb) \to A_\ast(\bHiggstildeab)$. In order to circumvent this difficulty, we follow \cite[Section~4]{art:schiffmannvasserot2013-II} (see also \cite{art:yangzhao2014} for the case of arbitrary Borel-Moore homology theories) and embed the convolution diagram \eqref{eq:convoldiagramhiggs} into a convolution diagram of smooth varieties and use refined Gysin pullbacks. One caveat of this approach is that we only manage to construct this embedding locally, and hence work with local atlases. The case of rank zero Higgs stacks is studied in details in \cite{art:minets2018}: since rank zero Higgs stacks are global quotient stacks, the author applies directly the machinery of \cite{art:schiffmannvasserot2013-II,art:yangzhao2014}. We shall follow closely \cite{art:minets2018} in some of the arguments here.

\subsubsection{Local charts of the stack $\bHiggstildeab$}\label{sec:localdescriptionHiggstilde}

Let $\alpha, \beta \in (\Z^2)^+$ and $\Lcal'$ be a line bundle on $X$. Let $\Lcal$ be a line bundle of degree less or equal that the $N$, depending on $\alpha, \beta, \Lcal'$ of Lemma \ref{lem:standard}. The diagram \ref{eq:convoldiagramcoh} reduces locally to
\begin{align}\label{eq:convoldiagramcohLLp}
\begin{aligned}
  \begin{tikzpicture}[xscale=3.8,yscale=-1]
  \node (A0_0) at (0, 0) {$\bCohaL \times\bCohbL$};
\node (A1_0) at (1, 0) {$\bCohtildeabLLp$};
\node (A2_0) at (1.8, 0) {$\bCohabLp$};
    \path (A1_0) edge [->]node [above] {$\scriptstyle{q_{\alpha, \beta}^{\Lcal, \Lcal'}}$} (A0_0);
   \path (A1_0) edge [->]node [above] {$\scriptstyle{p_{\alpha,\beta}^{\Lcal, \Lcal'}}$} (A2_0);
  \end{tikzpicture}
\end{aligned}
\end{align}
Such a diagram can be realized at the level of atlases in the following way. To unburden the notation, set  
\begin{align}
H\coloneqq H_{\alpha, \beta}^{\Lcal} \ ,\ P\coloneqq \PabL\ ,\ G\coloneqq \GabL\ ,\ V\coloneqq\QtildeabL \ , \ V^\circ\coloneqq \QtildeabLLp\ , \ Y\coloneqq \QaL\times \QbL\ , \ X'\coloneqq \QabLLp
\end{align}
and
\begin{align}
W\coloneqq G\underset{P}{\times} V\ , \quad W^{\circ}\coloneqq G\underset{P}{\times} V^{\circ}\ , \quad X\coloneqq G\underset{P}{\times} Y\ .
\end{align}
Since $\bCohaL\times \bCohbL\simeq [X/ G]$, $\bCohabLp\simeq [X'/G]$ and $\bCohtildeabLLp\simeq [W^\circ/G]$, the diagram \eqref{eq:convoldiagramcohLLp} corresponds to the following diagram of $G$-varieties
\begin{align}
\begin{aligned}
  \begin{tikzpicture}[xscale=1.8,yscale=-1.3]
  \node (A0_0) at (0.2, 1) {$X$};
\node (A1_0) at (1, 0) {$W$};
\node (A2_0) at (2, 0) {$W^\circ$};
\node (A3_0) at (2.8, 1) {$X'$};
    \path (A2_0) edge [->]node [above] {$\scriptstyle{i}$} (A1_0);
    \path (A1_0) edge [->]node [above] {$\scriptstyle{f}$} (A0_0);
   \path (A2_0) edge [->]node [above] {$\scriptstyle{g}$} (A3_0);
  \end{tikzpicture}
\end{aligned}
\end{align}
where $i\colon W^{\circ} \to W$ is the open immersion, and where $f,g$ are defined respectively by
\begin{align}
f\colon (h,v)\bmod{P} \mapsto (h,\overline{q}_{\alpha,\beta}^{\,\Lcal, \Lcal'}(v))\bmod{P}\quad\text{and}\quad g\colon (h,v)\bmod{P} \mapsto h \cdot \overline{p}_{\alpha,\beta}^{\,\Lcal, \Lcal'}(v)\ .
\end{align}
Since $(f\circ i,g)$ is a regular embedding, we can identify $W^\circ$ with a smooth subvariety of $X \times X'$. Put $Z^\circ\coloneqq T^\ast_{W^\circ}(X \times X')$. Denoting by $\Phi$ and $\Psi$ the projections on factors, we obtain a diagram
\begin{align}
\begin{aligned}
  \begin{tikzpicture}[xscale=2,yscale=-1.3]
  \node (A0_0) at (0.2, 1) {$T^\ast X$};
\node (A1_0) at (1, 0) {$Z^\circ $};
\node (A2_0) at (1.8, 1) {$T^\ast X'$};
    \path (A1_0) edge [->]node [above] {$\scriptstyle{\Phi}$} (A0_0);
   \path (A1_0) edge [->]node [above] {$\scriptstyle{\Psi}$} (A2_0);
  \end{tikzpicture}
\end{aligned}
\end{align}
Note that $\Psi$ is proper since $\overline{p}_{\alpha,\beta}^{\,\Lcal, \Lcal'}$ is a closed embedding (see e.g. \cite[Lemma~2.3]{art:schiffmannvasserot2012}), while $\Phi$ is a regular morphism as both $Z^\circ$ and $T^\ast X$ are smooth. Next, set $Z^\circ_G\coloneqq Z^\circ \cap (T^\ast_GX \times T^\ast_GX')$. Then by \emph{loc. cit.} $\Phi^{-1}(T^\ast_G X) = Z^\circ_G$ and $\Psi(Z^\circ_G) \subseteq T^\ast_GX'$. We arrive at the following diagram
\begin{align}
\begin{aligned}
  \begin{tikzpicture}[xscale=2,yscale=-2]
  \node (A0_1) at (0.2, 1) {$T^\ast_G X$};
\node (A1_0) at (1, 0) {$Z^\circ_G$};
\node (A2_1) at (1.8, 1) {$T^\ast_G X'$};
 \node (A0_2) at (0.2, 2) {$T^\ast X$};
\node (A1_1) at (1, 1) {$Z^\circ $};
\node (A2_2) at (1.8, 2) {$T^\ast X'$};
    \path (A1_0) edge [->]node [above] {$\scriptstyle{}$} (A1_1);
    \path (A0_1) edge [->]node [above] {$\scriptstyle{}$} (A0_2);
   \path (A2_1) edge [->]node [above] {$\scriptstyle{}$} (A2_2);  
    \path (A1_0) edge [->]node [above=0.7em, left=0.2em] {$\scriptstyle{\Phi_G}$} (A0_1);
   \path (A1_0) edge [->]node [above=0.7em, right=0.2em] {$\scriptstyle{\Psi_G}$} (A2_1);
    \path (A1_1) edge [->]node [above=0.7em, left=0.2em] {$\scriptstyle{\Phi}$} (A0_2);
   \path (A1_1) edge [->]node [above=0.7em, right=0.2em] {$\scriptstyle{\Psi}$} (A2_2);
  \end{tikzpicture}
\end{aligned}
\end{align}
in which the left square is cartesian. By Section \ref{sec:statificationHiggs}, we get $\bHiggsaL\times \bHiggsbL\simeq [T^\ast_G X/G]$ and $\bHiggsabLp\simeq [T^\ast_G X'/G]$. 
In addition, by following some of the arguments in the proof of \cite[Lemma~2.1]{art:minets2018} one can show that $[Z^\circ_G/G]$ is the stack $\bHiggstildeabLLp$ parameterizing inclusions of Higgs sheaves $\underline{\Gcal} \subset \underline{\Fcal}$ with $\underline{\Gcal} \in \bHiggsbL$ and $\underline{\Fcal} \in \bHiggsabLp$. Therefore, we obtain a diagram
\begin{align}\label{eq:convoldiagramhiggsLLp}
\begin{aligned}
  \begin{tikzpicture}[xscale=4,yscale=-1]
  \node (A0_0) at (0, 0) {$\bHiggsaL \times\bHiggsbL$};
\node (A1_0) at (1, 0) {$\bHiggstildeabLLp$};
\node (A2_0) at (1.8, 0) {$\bHiggsabLp$};
    \path (A1_0) edge [->]node [above] {$\scriptstyle{q_{\alpha, \beta}^{\Lcal, \Lcal'}}$} (A0_0);
   \path (A1_0) edge [->]node [above] {$\scriptstyle{p_{\alpha,\beta}^{\Lcal, \Lcal'}}$} (A2_0);
  \end{tikzpicture}
\end{aligned}
\end{align}
which is the local version of the diagram \eqref{eq:convoldiagramhiggs}.

\subsubsection{Definition of the multiplication}

Now we shall define for any free OBM theory $A_\ast$ (or for the usual Borel-Moore homology $H_\ast$) a map
\begin{align}\label{eq:defproduct}
m_{\alpha,\beta}^{\Lcal,\Lcal'}\colon A (\bHiggsaL)\otimes A(\bHiggsbL)\to A(\bHiggsabLp)\ .
\end{align}
In the above, we have suppressed the grading index $\ast$ in $A_\ast$ for extra readability. 

We shall continue to use the notation of the previous section. Because $\Phi$ is regular (in particular, l.c.i.) there is a refined Gysin pullback morphism
\begin{align}\label{eq:refGysin1}
\Phi^!\colon A^G(T^\ast_G X) \to A^G(Z^\circ_G)\ ,
\end{align}
and a pushforward morphism
\begin{align}\label{eq:refpush}
\Psi_{G, \ast}\colon  A^G(Z^\circ_G) \to A^G(T^\ast_GX')\ . 
\end{align}
Now, from the isomorphisms \eqref{eq:RLLp} we get the chain of isomorphisms 
\begin{align}
A^G(T^\ast_G X')=A^{\GabL}\big(T^\ast_{\GabL} \QabLLp\big)\simeq A^{\GabL\times \GabLp}\big(T^\ast_{\GabL\times \GabLp} \RabLLp\big)\simeq A^{\GabLp}\big(T^\ast_{\GabLp} \QabLp\big)\ .
\end{align}
In addition, since $T^\ast_G X =G \underset{P}{\times} T^\ast_H Y$,  by \cite[Proposition~A.6]{art:minets2018} we have  
\begin{align}
A^{\GaL \times \GbL}(T^\ast_{\GaL \times \GbL}(\QaL \times \QbL))=A^H(T^\ast_H Y)\simeq A^G(T^\ast_G X) \ .
\end{align}
By compositing all these maps together, we get
\begin{align}
A^{\GaL \times \GbL}(T^\ast_{\GaL \times \GbL}(\QaL \times \QbL))=&A^H(T^\ast_H Y)\simeq A^G(T^\ast_G X)\stackrel{\Phi^!}{\longrightarrow} A^G(Z^\circ_G)\\
&\stackrel{\Psi_{G, \ast}}{\longrightarrow} A^G(T^\ast_G X')\simeq A^{\GabLp}\big(T^\ast_{\GabLp} \QabLp\big)\ ,
\end{align}
whose restriction
\begin{align}\label{eq:defproduct2}
m_{\alpha,\beta}^{\Lcal,\Lcal'}\colon A^{\GaL}(T^\ast_{\GaL}\QaL) \otimes A^{\GbL}(T^\ast_{\GbL}( \QbL)) \to A^{\GabLp}(T^\ast_{\GabLp}\QabLp)
\end{align}
gives \eqref{eq:defproduct}.

\subsection{Main theorem} 

\begin{theorem}\label{theorem:defproduct} 
The collection of morphisms $m_{\alpha,\beta}^{\Lcal,\Lcal'}$ give rise to a canonically defined morphism
\begin{align}
m_{\alpha,\beta}\colon A(\bHiggsa) \otimes A(\bHiggsb) \to A(\bHiggsab)\ .
\end{align}
Equipped with these morphisms, 
\begin{align}
\COHA'_{\Higgs(X)}\coloneqq\bigoplus_{\alpha \in (\Z^2)^+} A(\bHiggsa)
\end{align}
is an associative algebra.
\end{theorem}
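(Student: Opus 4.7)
The plan has two stages: first, to establish that the locally defined morphisms $m_{\alpha,\beta}^{\Lcal,\Lcal'}$ assemble into a globally defined morphism $m_{\alpha,\beta}\colon A(\bHiggsa)\otimes A(\bHiggsb)\to A(\bHiggsab)$; and second, to verify associativity via a triple-convolution argument patterned after \cite{art:schiffmannvasserot2013-II, art:minets2018}. For the first stage, I exploit the inverse-limit presentation $A(\bHiggsa)=\varprojlim_{\Lcal}A(\bHiggsaL)$ arising from $\bHiggsa\simeq\varinjlim_{\Lcal}\bHiggsaL$. Two compatibilities are required: (a) for fixed $\Lcal$ and $\Lcal'\prec\Lcal''$, the open restriction $A(\bHiggsabLp)\to A(\bHiggs_{\alpha+\beta}^{>\Lcal''})$ should intertwine $m_{\alpha,\beta}^{\Lcal,\Lcal'}$ with $m_{\alpha,\beta}^{\Lcal,\Lcal''}$; and (b) for fixed $\Lcal'$ and $\Lcal_1\prec\Lcal_2$ both of sufficiently negative degree as demanded by Lemma \ref{lem:standard}, the open restrictions on the source should intertwine $m_{\alpha,\beta}^{\Lcal_1,\Lcal'}$ with $m_{\alpha,\beta}^{\Lcal_2,\Lcal'}$. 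Both reduce to base-change of the refined Gysin pullback \eqref{eq:refGysin1} and of the proper pushforward \eqref{eq:refpush} along open immersions, using the explicit glueing data supplied by the $\mathsf{R}$-type principal bundles of Section \ref{sec:localdescriptionHiggstilde}; these are standard compatibilities of OBM operations documented in \cite[Section~1.1]{art:yangzhao2014} and \cite[Appendix~A]{art:minets2018}. Passing to the inverse limit over the directed system of pairs $(\Lcal,\Lcal')$ then produces $m_{\alpha,\beta}$, which by construction restricts to each $m_{\alpha,\beta}^{\Lcal,\Lcal'}$ on the corresponding chart.

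For associativity, I would introduce the triple-flag stack $\bHiggstildetildeabgLLpLpp$ parametrising Higgs filtrations $\underline{\Hcal}\subset\underline{\Gcal}\subset\underline{\Fcal}$ of classes $\gamma$, $\beta+\gamma$, $\alpha+\beta+\gamma$ respectively, with strong-generation conditions compatible with the nested triple $\Lcal,\Lcal',\Lcal''$. Mimicking the construction of Section \ref{sec:localdescriptionHiggstilde}, one builds a smooth local cotangent correspondence $Z^{\circ\circ}_G\subset T^\ast X\times T^\ast X'\times T^\ast X''$ through which both iterated products $m_{\alpha+\beta,\gamma}\circ(m_{\alpha,\beta}\otimes\id)$ and $m_{\alpha,\beta+\gamma}\circ(\id\otimes m_{\beta,\gamma})$ are realised as a single refined Gysin pullback followed by a single proper pushforward. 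Associativity then reduces to two standard facts for OBM theories: functoriality $(f\circ g)^!=g^!\circ f^!$ of refined Gysin pullbacks under composition of l.c.i.\ morphisms, and base change between refined pullback and proper pushforward in a cartesian square with one arrow l.c.i.\ and the opposite arrow proper.

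The main obstacle I anticipate is compatibility (b): unlike the coherent-sheaf situation, where $q_{\alpha,\beta}^{\Lcal,\Lcal'}$ is a vector-bundle-stack morphism and its pullback is insensitive to the atlas, the refined Gysin map $\Phi^!$ of \eqref{eq:refGysin1} depends on the specific l.c.i.\ structure of the embedding $W^\circ\hookrightarrow X\times X'$, and this embedding genuinely changes when one replaces $\Lcal$ by another line bundle. The resolution is that the different atlases for $\bHiggsaLone\supset\bHiggsaLtwo$ (and analogously for $\beta$ and $\alpha+\beta$) are, by the $\mathsf{R}$-type construction underlying Lemma \ref{L:rall} lifted to cotangents via \eqref{eq:haLLp}, quotients of a single smooth scheme by two different reductive groups; hence the change of chart in $\Lcal$ manifests as a \emph{smooth} base change of the entire cotangent correspondence defining $m_{\alpha,\beta}^{\Lcal,\Lcal'}$. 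Since both refined Gysin pullback and proper pushforward commute with smooth pullback, the desired compatibility follows, and an entirely analogous but easier argument handles compatibility (a).
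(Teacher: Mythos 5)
Your proposal is correct and follows essentially the same route as the paper: the paper likewise reduces well-definedness to the two chart-compatibilities \eqref{eq:proof1} and \eqref{eq:proof2}, proves the harder one (change of the source chart $\Lcal$) by exhibiting both refined Gysin pullbacks as pullbacks from a common cotangent correspondence on the total spaces of the $\mathsf{R}$-type principal bundles and invoking \cite[Theorem~6.2~(c)]{book:fulton1998}, and handles associativity locally by the triple-convolution argument of \cite[Theorem~2.2]{art:minets2018}. Your identification of compatibility (b) as the genuine difficulty, and its resolution via the principal-bundle/smooth-descent mechanism, is precisely the content of the paper's proof.
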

\begin{proof}
The first statement boils down to the following. Let $\alpha,\beta$ be fixed, and let $\Lcal_i$ for $i=1,2,3$ be line bundles on $X$. Assume that $\Lcal_3$ is strongly generated by $\Lcal_2$, itself strongly generated by $\Lcal_1$. Assume in addition that the conclusion of Lemma \ref{lem:standard} holds for the pairs $(\Lcal_1,\Lcal_2)$ and $(\Lcal_2,\Lcal_3)$. Then, denoting by $\mathsf{res}_{\gamma}^{\Lcal,\Lcal'} \colon A(\bHiggs_{\gamma}^{>\Lcal}) \to  A(\bHiggs_{\gamma}^{>\Lcal'})$ the pullback induced by the open embedding $h_{\Lcal, \Lcal', \gamma}$ introduced in \eqref{eq:haLLp} for any $\gamma\in (\Z^2)^+$, we have to show that
\begin{align}\label{eq:proof1}
m_{\alpha,\beta}^{\Lcal_1,\Lcal_3} &= m_{\alpha,\beta}^{\Lcal_2,\Lcal_3} \circ (\mathsf{res}^{\Lcal_1,\Lcal_2}_{\alpha} \otimes \mathsf{res}^{\Lcal_1,\Lcal_2}_{\beta})  \ , \\  \label{eq:proof2}
m_{\alpha,\beta}^{\Lcal_1,\Lcal_3} & = \mathsf{res}^{\Lcal_2, \Lcal_3}_{\alpha+\beta} \circ m_{\alpha,\beta}^{\Lcal_1,\Lcal_2}\ , 
\end{align}
as morphisms $A(\bHiggsaLone) \otimes A(\bHiggsbLone) \to A(\bHiggsabLthree)$. We will prove \eqref{eq:proof1} in details and leave the (easier) \eqref{eq:proof2} to the reader. We begin by observing that by Lemma \ref{lem:standard} there is a factorization
\begin{align}
\begin{aligned}
  \begin{tikzpicture}[xscale=3.5,yscale=-1.5]
  \node (A0_0) at (0, 0) {$\QaLoneLtwo\times \QbLoneLtwo$};
\node (A1_0) at (1, 0) {$\QtildeabLoneLthree$};
\node (A0_1) at (0, 1) {$\QaLone\times\QbLone$};
    \path (A0_0) edge [->]node [left] {$\scriptstyle{j}$} (A0_1);
   \path (A1_0) edge [->]node [above] {$\scriptstyle{\overline{q}_{\alpha, \beta}^{(\Lcal_1, \Lcal_2), \Lcal_3}}$} (A0_0);
   \path (A1_0) edge [->]node [auto] {$\scriptstyle{\overline{q}_{\alpha, \beta}^{\Lcal_1, \Lcal_3}}$} (A0_1);
  \end{tikzpicture}
\end{aligned}
\end{align}
where $j$ denote the canonical open immersion. Here for any pair of line bundles $\Lcal, \Lcal'$, by abuse of notation we have denoted by $\overline{q}_{\alpha,\beta}^{\Lcal, \Lcal'}$ the composition of the open embedding $\QtildeabLLp\hookrightarrow \QtildeabL$ with the map $\overline{q}_{\alpha,\beta}^{\Lcal, \Lcal'}$ introduced in \eqref{eq:overlinemaps}. 

Keeping the notations used in Section \ref{sec:localdescriptionHiggstilde}, set $G\coloneqq \mathsf{G}_{\alpha+\beta}^{\Lcal_1}$, $P\coloneqq \PabLone$ and $X_{\Lcal_2}\coloneqq G \underset{P}{\times} (\QaLoneLtwo\times \QbLoneLtwo)$. Then there is also a factorization
\begin{align}
\begin{aligned}
  \begin{tikzpicture}[xscale=2.5,yscale=-1]
  \node (A0_0) at (0, 0) {$T^\ast_G X$};
\node (A2_0) at (2, 0) {$Z^\circ_G$};
\node (A1_1) at (1, 1) {$T^\ast_G X_{\Lcal_2}$};
 \node (A0_2) at (0, 2) {$T^\ast X$};
\node (A2_2) at (2, 2) {$Z^\circ $};
\node (A1_3) at (1, 3) {$T^\ast X_{\Lcal_2}$};
    \path (A0_0) edge [->]node [above] {$\scriptstyle{}$} (A0_2);
    \path (A2_0) edge [->]node [above] {$\scriptstyle{}$} (A2_2);
   \path (A1_1) edge [->]node [above] {$\scriptstyle{}$} (A1_3);  
    \path (A2_0) edge [->]node [right=2em, above] {$\scriptstyle{\Phi_G}$} (A0_0);
   \path (A2_0) edge [->]node [auto] {$\scriptstyle{\Phi_G'}$} (A1_1);
    \path (A2_2) edge [->]node [right=2em, above] {$\scriptstyle{\Phi}$} (A0_2);
   \path (A2_2) edge [->]node [auto] {$\scriptstyle{\Phi'}$} (A1_3);
       \path (A1_1) edge [->]node [auto] {$\scriptstyle{j_G}$} (A0_0);
    \path (A1_3) edge [->]node [auto] {$\scriptstyle{j}$} (A0_2);
  \end{tikzpicture}
\end{aligned}
\end{align}

Because Gysin pullbacks commute with the restriction to open subsets, we have $\Phi^!=(\Phi')^! \circ j^\ast = (\Phi')^! \circ (\mathsf{res}^{\Lcal_1,\Lcal_2}_{\alpha} \otimes \mathsf{res}^{\Lcal_1,\Lcal_2}_{\beta})$. In order to conclude, we have to identify $(\Phi')^!$ with the Gysin pullback $\Phi_2^!$ coming from the cartesian square
\begin{align}
\begin{aligned}
  \begin{tikzpicture}[xscale=1.5,yscale=-1]
  \node (A0_0) at (0, 0) {$T^\ast_{G_2} X_2$};
\node (A2_0) at (2, 0) {$Z^\circ_{G_2}$};
 \node (A0_2) at (0, 2) {$T^\ast X_2$};
\node (A2_2) at (2, 2) {$Z^\circ_2 $};
    \path (A0_0) edge [->]node [above] {$\scriptstyle{}$} (A0_2);
    \path (A2_0) edge [->]node [above] {$\scriptstyle{}$} (A2_2);
    \path (A2_0) edge [->]node [above] {$\scriptstyle{\Phi_{2, G_2}}$} (A0_0);
    \path (A2_2) edge [->]node [above] {$\scriptstyle{\Phi_2}$} (A0_2);
  \end{tikzpicture}
\end{aligned}
\end{align}
itself built from the diagram
\begin{align}
\begin{aligned}
  \begin{tikzpicture}[xscale=2,yscale=-1]
  \node (A0_0) at (0, 0) {$X_2$};
\node (A1_0) at (1, 0) {$W_2^\circ$};
\node (A2_0) at (1.8, 0) {$X_2'$};
    \path (A1_0) edge [->]node [above] {$\scriptstyle{\overline{q}_2}$} (A0_0);
   \path (A1_0) edge [->]node [above] {$\scriptstyle{\overline{p}_2}$} (A2_0);
  \end{tikzpicture}
\end{aligned}
\end{align}
where we set
\begin{align}
G_2\coloneqq\mathsf{G}^{\Lcal_2}_{\alpha+\beta}\ ,\quad P_2\coloneqq \mathsf{P}_{\alpha,\beta}^{\Lcal_2}\ ,\quad X'_2\coloneqq {\mathsf{Q}}^{\Lcal_2,\Lcal_3}_{\alpha+\beta}\ , \quad W^{\circ}_2\coloneqq G_2 \underset{P_2}{\times} \widetilde{\mathsf{Q}}^{\Lcal_2,\Lcal_3}_{\alpha,\beta}\ , \quad X_2\coloneqq G_2 \underset{P_2}{\times}(\mathsf{Q}^{\Lcal_2}_{\alpha} \times \mathsf{Q}^{\Lcal_2}_{\beta})\ .
\end{align}
Consider the commutative diagram
\begin{align}\label{eq:diagram-proof1}
\begin{aligned}
  \begin{tikzpicture}[xscale=3.5,yscale=-2]
  \node (A0_0) at (0, 0) {$\mathsf{Q}^{\Lcal_2}_{\alpha} \times  \mathsf{Q}^{\Lcal_2}_{\beta}$};
\node (A1_0) at (1, 0) {$\widetilde{\mathsf{Q}}^{\Lcal_2,\Lcal_3}_{\alpha,\beta}$};
 \node (A2_0) at (2, 0) {$\mathsf{Q}^{\Lcal_2,\Lcal_3}_{\alpha+\beta}$};
 \node (A0_1) at (0, 1) {$\mathsf{R}^{\Lcal_1,\Lcal_2}_{\alpha} \times  \mathsf{R}^{\Lcal_1,\Lcal_2}_{\beta}$};
\node (A1_1) at (1, 1) {$\widetilde{\mathsf{R}}^{(\Lcal_1,\Lcal_2),\Lcal_3}_{\alpha,\beta}$};
 \node (A2_1) at (2, 1) {$\mathsf{R}^{(\Lcal_1,\Lcal_2),\Lcal_3}_{\alpha+\beta}$};
 \node (A0_2) at (0, 2) {$\mathsf{Q}^{\Lcal_1,\Lcal_2}_{\alpha} \times  \mathsf{Q}^{\Lcal_1,\Lcal_2}_{\beta}$};
\node (A1_2) at (1, 2) {$\widetilde{\mathsf{Q}}^{\Lcal_1,\Lcal_3}_{\alpha,\beta}$};
 \node (A2_2) at (2, 2) {$\mathsf{Q}^{\Lcal_1,\Lcal_3}_{\alpha+\beta}$};
    \path (A1_0) edge [->]node [above] {$\scriptstyle{}$} (A0_0);
    \path (A1_0) edge [->]node [above] {$\scriptstyle{}$} (A2_0);
    \path (A1_1) edge [->]node [above] {$\scriptstyle{}$} (A0_1);
    \path (A1_1) edge [->]node [above] {$\scriptstyle{}$} (A2_1);
    \path (A1_2) edge [->]node [above] {$\scriptstyle{}$} (A0_2);
    \path (A1_2) edge [->]node [above] {$\scriptstyle{}$} (A2_2);
    \path (A0_1) edge [->]node [above] {$\scriptstyle{}$} (A0_0);
    \path (A0_1) edge [->]node [above] {$\scriptstyle{}$} (A0_2);
    \path (A1_1) edge [->]node [above] {$\scriptstyle{}$} (A1_0);
    \path (A1_1) edge [->]node [above] {$\scriptstyle{}$} (A1_2);
    \path (A2_1) edge [->]node [above] {$\scriptstyle{}$} (A2_0);
    \path (A2_1) edge [->]node [above] {$\scriptstyle{}$} (A2_2);
  \end{tikzpicture}
\end{aligned}
\end{align}
where ${\mathsf{R}}^{(\Lcal_1,\Lcal_2),\Lcal_3}_{\alpha,\beta}$ is  the scheme representing the contravariant functor $\mathsf{Aff}/k \to (\mathsf{Sets})$ which assigns to an affine $k$-variety $S$ the set of pairs $([\phi], u)$, where $[\phi]\in \mathsf{Q}^{\Lcal_1,\Lcal_3}_{\alpha+\beta}(S)$, and $u\colon \VV_{\Lcal_2,\,\alpha+\beta}\boxtimes \Ocal_S\simeq \Hom(\Lcal_2\boxtimes\Ocal_S,\Fcal)$ is an isomorphism. 

The downwards pointing vertical arrows in \eqref{eq:diagram-proof1} are, respectively, a $\mathsf{G}^{\Lcal_2}_{\alpha} \times \mathsf{G}^{\Lcal_2}_{\beta}$, a $P_{\alpha, \beta}^{\Lcal_2}$ and a $\mathsf{G}^{\Lcal_2}_{\alpha+\beta}$-principal bundle, where $P_{\alpha, \beta}^{\Lcal_2}\subset \mathsf{G}^{\Lcal_2}_{\alpha+\beta}$ is a parabolic subgroup with Levi factor $\mathsf{G}^{\Lcal_2}_{\alpha} \times \mathsf{G}^{\Lcal_2}_{\beta}$. Similarly, the upwards pointing vertical arrows in \eqref{eq:diagram-proof1} are respectively a $\mathsf{G}^{\Lcal_1}_{\alpha} \times \mathsf{G}^{\Lcal_1}_{\beta}$, a $P_{\alpha,\beta}^{\Lcal_1}$ and a $\mathsf{G}^{\Lcal_1}_{\alpha+\beta}$-principal bundle, where $P_{\alpha,\beta}^{\Lcal_1} \subset \mathsf{G}^{\Lcal_1}_{\alpha+\beta}$ is a parabolic subgroup with Levi factor $\mathsf{G}^{\Lcal_1}_{\alpha} \times \mathsf{G}^{\Lcal_1}_{\beta}$. 

After passing to the cotangent spaces, the Gysin pullback $(\Phi')^!$ comes from the bottom row of \eqref{eq:diagram-proof1} while $\Phi_2^!$ comes from the
top row of \eqref{eq:diagram-proof1}. We are in the following general situation. Let $H \subset P \subset G$, $H'\subset P'\subset G'$ be a pair of Levi factors inclusions of parabolic subgroups of some reductive groups $G,G'$. Set $\overline{G}\coloneqq G \times G', \overline{P}=P \times P', \overline{H}=H \times H'$. Let $Y,V,X'$ be a triple of smooth varieties equipped with respective actions of $H,P$ and $G$ along with $P$-equivariant maps 
\begin{align}
\begin{aligned}
  \begin{tikzpicture}[xscale=1.5,yscale=-1]
  \node (A0_0) at (0, 0) {$Y$};
\node (A1_0) at (1, 0) {$V$};
\node (A2_0) at (2, 0) {$X'$};
    \path (A1_0) edge [->]node [above] {$\scriptstyle{q}$} (A0_0);
   \path (A1_0) edge [->]node [above] {$\scriptstyle{p}$} (A2_0);
  \end{tikzpicture}
\end{aligned}
\end{align}
We further assume that $q$ is smooth and $p$ is a closed embedding. Let $(\overline{Y},\overline{V}, \overline{X'}, \overline{q},\overline{p})$ be similar data for $\overline{H}, \overline{P}, \overline{G}$ and suppose that we have a commuting diagram
\begin{align}
\begin{aligned}
  \begin{tikzpicture}[xscale=1.5,yscale=-1]
  \node (A0_0) at (0, 0) {$\overline{Y}$};
\node (A1_0) at (1, 0) {$\overline{V}$};
\node (A2_0) at (2, 0) {$\overline{X'}$};
  \node (A0_1) at (0, 1) {$Y$};
\node (A1_1) at (1, 1) {$V$};
\node (A2_1) at (2, 1) {$X'$};
   \path (A1_0) edge [->]node [above] {$\scriptstyle{\overline{q}}$} (A0_0);
   \path (A1_0) edge [->]node [above] {$\scriptstyle{\overline{p}}$} (A2_0); 
    \path (A1_1) edge [->]node [above] {$\scriptstyle{q}$} (A0_1);
   \path (A1_1) edge [->]node [above] {$\scriptstyle{p}$} (A2_1);
   \path (A0_0) edge [->]node [above] {$\scriptstyle{}$} (A0_1);
   \path (A1_0) edge [->]node [above] {$\scriptstyle{}$} (A1_1);
      \path (A2_0) edge [->]node [above] {$\scriptstyle{}$} (A2_1); 
  \end{tikzpicture}
\end{aligned}
\end{align}
in which the vertical arrows are respectively a $H', P'$ and $G'$-principal bundles. Forming the fiber products 
\begin{align}
X\coloneqq G \underset{P}{\times}Y\ , \qquad W\coloneqq G \underset{P}{\times} V\ , \qquad \overline{X}\coloneqq \overline{G} \underset{\overline{P}}{\times}\overline{Y}\ , \qquad \overline{W}\coloneqq\overline{G} \underset{\overline{P}}{\times} \overline{V}
\end{align}
yields a commuting diagram
\begin{align}
\begin{aligned}
  \begin{tikzpicture}[xscale=1.5,yscale=-1]
  \node (A0_0) at (0, 0) {$\overline{X}$};
\node (A1_0) at (1, 0) {$\overline{W}$};
\node (A2_0) at (2, 0) {$\overline{X'}$};
  \node (A0_1) at (0, 1) {$X$};
\node (A1_1) at (1, 1) {$W$};
\node (A2_1) at (2, 1) {$X'$};
   \path (A1_0) edge [->]node [above] {$\scriptstyle{}$} (A0_0);
   \path (A1_0) edge [->]node [above] {$\scriptstyle{}$} (A2_0); 
    \path (A1_1) edge [->]node [above] {$\scriptstyle{}$} (A0_1);
   \path (A1_1) edge [->]node [above] {$\scriptstyle{}$} (A2_1);
   \path (A0_0) edge [->]node [above] {$\scriptstyle{}$} (A0_1);
   \path (A1_0) edge [->]node [above] {$\scriptstyle{}$} (A1_1);
      \path (A2_0) edge [->]node [above] {$\scriptstyle{}$} (A2_1); 
  \end{tikzpicture}
\end{aligned}
\end{align}
in which all vertical arrows are $G'$-principal bundles. Next, we put 
\begin{align}
Z\coloneqq T^\ast_W(X \times X')\ , \ \overline{Z}\coloneqq T^\ast_{\overline{W}}( \overline{X} \times \overline{X'})\ , \ Z_G\coloneqq Z \cap (T^\ast_GX \times T^\ast_GX')\ , \ \overline{Z}_{\overline{G}}\coloneqq \overline{Z} \cap (T^\ast_{\overline{G}}\overline{X} \times T^\ast_{\overline{G}}\overline{X'})\ .
\end{align}
Observe that $T^\ast_{\overline{G}}\overline{X}$ and $\overline{Z}_{\overline{G}}$ are both $G'$-principal bundles over $T^\ast_GX$ and $Z_G$ respectively. In addition, there is a commutative diagram
 \begin{align}
\begin{aligned}
  \begin{tikzpicture}[xscale=2.5,yscale=-1]
  \node (A0_0) at (0, 0) {$T^\ast_{\overline{G}}\overline{X}$};
\node (A2_0) at (2, 0) {$\overline{Z}_{\overline{G}}$};
  \node (A1_1) at (1, 1) {$T^\ast_G X$};
\node (A3_1) at (3, 1) {$Z_G$};
  \node (A0_2) at (0, 2) {$T^\ast_{{G}'}\overline{X}$};
\node (A2_2) at (2, 2) {$\overline{Z}_{G'}$};
  \node (A1_3) at (1, 3) {$T^\ast X$};
\node (A3_3) at (3, 3) {$Z$};
  \node (A0_4) at (0, 4) {$T^\ast\overline{X}$};
\node (A2_4) at (2, 4) {$\overline{Z}$};
   \path (A0_0) edge [->]node [above] {$\scriptstyle{}$} (A1_1);
   \path (A2_0) edge [->]node [above] {$\scriptstyle{}$} (A3_1); 
    \path (A0_2) edge [->]node [above] {$\scriptstyle{}$} (A1_3);
   \path (A2_0) edge [->]node [above] {$\scriptstyle{}$} (A3_1);
   \path (A2_2) edge [->]node [above] {$\scriptstyle{}$} (A3_3);
   \path (A2_0) edge [->]node [right=2em, above] {$\scriptstyle{\overline{\Phi}_{\overline{G}}}$} (A0_0);
      \path (A3_1) edge [->]node [right=2em, above] {$\scriptstyle{\Phi_{G}}$} (A1_1); 
      \path (A2_2) edge [->]node [right=2em, above] {$\scriptstyle{\overline{\Phi}_{G'}}$} (A0_2); 
      \path (A3_3) edge [->]node [right=2em, above] {$\scriptstyle{\Phi}$} (A1_3);
      \path (A2_4) edge [->]node [right=2em, above] {$\scriptstyle{\overline{\Phi}}$} (A0_4); 
   \path (A0_0) edge [->]node [above] {$\scriptstyle{}$} (A0_2);
   \path (A0_2) edge [->]node [above] {$\scriptstyle{}$} (A0_4);
   \path (A2_0) edge [->]node [above] {$\scriptstyle{}$} (A2_2);
   \path (A2_2) edge [->]node [above] {$\scriptstyle{}$} (A2_4);
   \path (A1_1) edge [->]node [above] {$\scriptstyle{}$} (A1_3);
   \path (A3_1) edge [->]node [above] {$\scriptstyle{}$} (A3_3);      
  \end{tikzpicture}
\end{aligned}
\end{align}
in which all diagonal arrows are $G'$-principal bundles. Note that $\overline{\Phi}_{G'}$ is regular since $\Phi$ is, hence by \cite[Theorem~6.2 (c)]{book:fulton1998} it follows that $\overline{\Phi}^!=\overline{\Phi}_{G'}^!$. Thanks to the identifications $A^{\overline{G}}(T^\ast_{\overline{G}}\overline{X}) \simeq A^G(T^\ast_GX)$ and $A^{\overline{G}}(\overline{Z}_{\overline{G}}) \simeq A^G(Z_G)$, we have that $\Phi^!=\overline{\Phi}^!$.

To conclude it is enough to apply the previous argument first for the pair of reductive groups $G=\mathsf{G}^{\Lcal_1}_{\alpha+\beta}, G'=\mathsf{G}^{\Lcal_2}_{\alpha+\beta}$ and the diagram in \eqref{eq:diagram-proof1} consisting of the central and bottom vertical arrows, later for the pair of reductive groups $G'=\mathsf{G}^{\Lcal_1}_{\alpha+\beta}, G=\mathsf{G}^{\Lcal_2}_{\alpha+\beta}$ and the diagram in \eqref{eq:diagram-proof1} consisting of the central and top vertical arrows. We obtain $\Phi_2^!= \overline{\Phi}=(\Phi')^!$ and this completes the proof of \eqref{eq:proof1}. 

The proof of associativity of the multiplication can be made in locally and uses the same arguments as in the proof of \cite[Theorem~2.2]{art:minets2018}.
\end{proof}

\begin{definition} Let $A$ be either a free OBM theory or $A=H_\ast$. We call the algebra $\COHA'_{\Higgs(X)}$ the \emph{unrestricted} $A$-\emph{homological Hall algebra of the category $\Higgs(X)$}. 
\end{definition}
It is easy to see from the construction that the subgroup 
\begin{align}
\COHA_{\Higgs(X)}\coloneqq \bigoplus_{\alpha \in (\Z^2)^+} A^0(\bHiggsa)
\end{align}
is a subalgebra, which we call the $A$-\emph{homological Hall algebra of the category $\Higgs(X)$}.

\subsection{Cohomological Hall algebra of nilpotent, semistable and equivariant Higgs sheaves}

The full abelian subcategory $\Higgs^{\mathsf{nilp}}(X)$ of $\Higgs(X)$ is stable under extension, and the same holds for $\Higgs^{\mathsf{ss},\, \nu}(X)$ for each fixed slope $\nu$. We may repeat the above constructions \emph{verbatim} in these contexts (using refined Gysin pullback maps on each local charts\footnote{The stack of semistable Higgs bundles is a global quotient stack (see for example \cite[Section~7.7.1]{art:casalainawise2017}), hence we do not need to restrict ourselves to local charts.} obtained by embedding in the \emph{same} l.c.i. morphism). The compatibility of Gysin pullbacks with direct images by proper morphism and and pullback by open immersions (see \cite[Theorem~6.2 (a), (b)]{book:fulton1998}) imply the following.
\begin{corollary}\label{cor:Higgsvariantnilp} 
There is a natural associative algebra structure on the group 
\begin{align}
\COHA'_{\Higgs^{\mathsf{nilp}}(X)}\coloneqq\bigoplus_{\alpha \in (\Z^2)^+} A(\bLama)\ .
\end{align}
Furthermore, the proper pushforward map $\COHA'_{\Higgs^{\mathsf{nilp}}(X)} \to \COHA'_{\Higgs(X)}$ is an algebra homomorphism. Similarly, there is a natural associative algebra structure on the group 
\begin{align}
\COHA_{\Higgs^{\mathsf{nilp}}(X)}\coloneqq\bigoplus_{\alpha \in (\Z^2)^+} A^0(\bLama)\ ,
\end{align}
and the proper pushforward map $\COHA_{\Higgs^{\mathsf{nilp}}(X)} \to \COHA_{\Higgs(X)}$ is an algebra homomorphism.
\end{corollary}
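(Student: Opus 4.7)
The plan is to repeat the construction of $m_{\alpha,\beta}^{\Lcal,\Lcal'}$ on the closed substacks $\bLama \subset \bHiggsa$. Since $\Higgs^{\mathsf{nilp}}(X)$ is closed under subobjects, quotients and extensions, the stack
\begin{align*}
\widetilde{\bLam}_{\alpha,\beta} := \bHiggstildeab \underset{\bHiggsab}{\times} \bLam_{\alpha+\beta}
\end{align*}
coincides with $\bHiggstildeab \underset{\bHiggsa \times \bHiggsb}{\times} (\bLama \times \bLamb)$ and is a closed substack of $\bHiggstildeab$, parametrizing short exact sequences of \emph{nilpotent} Higgs sheaves. Restricting the maps $q_{\alpha,\beta}, p_{\alpha,\beta}$ to these nilpotent loci yields a convolution diagram $\bLama \times \bLamb \leftarrow \widetilde{\bLam}_{\alpha,\beta} \to \bLam_{\alpha+\beta}$ whose two sides sit in cartesian squares with the closed immersions $\bLam_\gamma \hookrightarrow \bHiggs_\gamma$.

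First, I would introduce the local charts $\bLam^{>\Lcal}_\alpha := \bLam_\alpha \cap \bHiggs_\alpha^{>\Lcal}$ and $\widetilde{\bLam}_{\alpha,\beta}^{>\Lcal,>\Lcal'} := \widetilde{\bLam}_{\alpha,\beta} \cap \bHiggstildeabLLp$. Keeping the notation of Section~\ref{sec:localdescriptionHiggstilde}, these correspond to $G$-invariant closed subvarieties $T^\ast_{G,\mathsf{nilp}} X \subset T^\ast_G X$, $T^\ast_{G,\mathsf{nilp}} X' \subset T^\ast_G X'$ and $Z^\circ_{G,\mathsf{nilp}} \subset Z^\circ_G$, which, by the defining property of nilpotence under the maps $\overline{p}_{\alpha,\beta}^{\,\Lcal,\Lcal'}$ and $\overline{q}_{\alpha,\beta}^{\,\Lcal,\Lcal'}$, satisfy $Z^\circ_{G,\mathsf{nilp}} = \Phi_G^{-1}(T^\ast_{G,\mathsf{nilp}} X)$ and $\Psi_G(Z^\circ_{G,\mathsf{nilp}}) \subseteq T^\ast_{G,\mathsf{nilp}} X'$. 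The product $m_{\alpha,\beta}^{\mathsf{nilp},\Lcal,\Lcal'}$ is then defined as the composition $\Psi_{G,*} \circ \Phi^!$, where the refined Gysin pullback $\Phi^!$ is taken with respect to the regular morphism $\Phi$ applied to classes supported on $T^\ast_{G,\mathsf{nilp}} X$; independence of $(\Lcal, \Lcal')$ and associativity follow verbatim from the proof of Theorem~\ref{theorem:defproduct}, as those arguments rely only on compatibility of refined Gysin pullbacks with open restrictions and with smooth base change along principal bundles, none of which are affected by restricting to the nilpotent loci.

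To see that the proper pushforward $\COHA'_{\Higgs^{\mathsf{nilp}}(X)} \to \COHA'_{\Higgs(X)}$ is an algebra morphism, I would observe that the local convolution diagram for the nilpotent variant embeds as a closed substack into the diagram \eqref{eq:convoldiagramhiggsLLp} through a cube of cartesian squares, the vertical arrows being closed immersions. The compatibility of the refined Gysin pullback $\Phi^!$ with proper pushforward in cartesian squares (\cite[Theorem~6.2~(a)]{book:fulton1998}) together with the standard compatibility of $\Psi_{G,*}$ with closed pushforwards yields the desired identity $(\iota_{\alpha+\beta})_\ast \circ m^{\mathsf{nilp},\Lcal,\Lcal'}_{\alpha,\beta} = m^{\Lcal,\Lcal'}_{\alpha,\beta} \circ ((\iota_\alpha)_\ast \otimes (\iota_\beta)_\ast)$. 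The statement for $\COHA_{\Higgs^{\mathsf{nilp}}(X)}$ and its pushforward to $\COHA_{\Higgs(X)}$ follows from the preservation of the support conditions defining $A^0_\ast$ under the operations $(-)_\ast, (-)^\ast, (-)^!$ recalled in Section~\ref{sec:BM}. The only point requiring genuine verification is that the Gysin formalism is well-defined on the (possibly singular) closed loci $Z^\circ_{G,\mathsf{nilp}}$, which reduces to observing that $\Phi$ is l.c.i.\ globally (so that $\Phi^!$ acts on arbitrary supported classes, cf.\ \cite[Theorem~6.2~(c)]{book:fulton1998}) and that $\Psi_G$ is proper.
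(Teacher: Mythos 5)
Your proposal is correct and follows essentially the same route as the paper: the paper's (very terse) argument likewise restricts the convolution diagram to the nilpotent loci, uses that $\Higgs^{\mathsf{nilp}}(X)$ is closed under extensions, subobjects and quotients so that the same l.c.i.\ morphism $\Phi$ furnishes the refined Gysin pullback on the closed nilpotent substacks, and invokes \cite[Theorem~6.2~(a),(b)]{book:fulton1998} for compatibility with proper pushforward and open restriction. The only cosmetic quibble is your final citation of \cite[Theorem~6.2~(c)]{book:fulton1998}: the fact that $\Phi^!$ acts on classes supported on the nilpotent locus is just the definition of the refined Gysin morphism $f^!_g$ for an arbitrary $g$ mapping to the target of the l.c.i.\ morphism, not item (c) of that theorem.
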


\begin{corollary}\label{cor:Higgsvariantss} 
For any fixed slope $\nu$, there is a natural associative algebra structure on the group 
\begin{align}
\COHA_{\Higgs^{\mathsf{ss},\, \nu}(X)}\coloneqq\bigoplus_{\genfrac{}{}{0pt}{}{\alpha \in (\Z^2)^+}{\mu(\alpha)=\nu}} A^0(\bHiggs^{\mathsf{ss}}_{\alpha})\ .
\end{align}
Furthermore, the open restriction map $\COHA_{\Higgs^\nu(X)} \to \COHA_{\Higgs^{\mathsf{ss},\, \nu}(X)}$ is an algebra homomorphism, where
\begin{align}
\COHA_{\Higgs^{\nu}(X)}\coloneqq\bigoplus_{\substack{ \alpha \in (\Z^2)^+ \\ \mu(\alpha)=\nu}} A^0(\bHiggs_{\alpha})\ .
\end{align}
\end{corollary}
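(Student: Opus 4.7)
The plan is to mimic the construction already carried out for $\COHA_{\Higgs(X)}$ and $\COHA_{\Higgs^{\mathsf{nilp}}(X)}$, exploiting the fact that $\Higgs^{\mathsf{ss},\, \nu}(X)$ is a full abelian subcategory of $\Higgs(X)$ stable under extensions. First I would verify that the Hecke-type correspondence restricts well, i.e. that
\begin{align}
\bHiggstildeab^{\,\mathsf{ss},\, \nu} \coloneqq q_{\alpha,\beta}^{-1}(\bHiggs^{\mathsf{ss}}_\alpha \times \bHiggs^{\mathsf{ss}}_\beta) = p_{\alpha,\beta}^{-1}(\bHiggs^{\mathsf{ss}}_{\alpha+\beta})
\end{align}
whenever $\mu(\alpha)=\mu(\beta)=\nu$. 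The inclusion $q_{\alpha,\beta}^{-1}(\bHiggs^{\mathsf{ss}}_\alpha \times \bHiggs^{\mathsf{ss}}_\beta) \subseteq p_{\alpha,\beta}^{-1}(\bHiggs^{\mathsf{ss}}_{\alpha+\beta})$ follows from the fact that an extension of two semistable Higgs sheaves of slope $\nu$ is itself semistable of slope $\nu$; the reverse inclusion follows because any Higgs subsheaf of a semistable Higgs sheaf of slope $\nu$ which has slope $\nu$ is again semistable, and the same for the quotient. This common locus is an open substack of $\bHiggstildeab$.

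Next, by Theorem \ref{T:StackHiggs}~(c), the stacks $\bHiggs^{\mathsf{ss}}_\alpha$ are global quotient stacks. Thus, contrary to the general case of $\bHiggs(X)$, one does not need to pass to local charts: the whole convolution diagram
\begin{align}
\bHiggs^{\mathsf{ss}}_\alpha \times \bHiggs^{\mathsf{ss}}_\beta \xleftarrow{q_{\alpha,\beta}^{\mathsf{ss}}} \bHiggstildeab^{\,\mathsf{ss},\, \nu} \xrightarrow{p_{\alpha,\beta}^{\mathsf{ss}}} \bHiggs^{\mathsf{ss}}_{\alpha+\beta}
\end{align}
can be embedded globally into a diagram of smooth ambient varieties of the form $G\underset{P}{\times}Y \leftarrow Z^\circ_G \rightarrow T^\ast_G X'$, obtained from the corresponding global quotient presentation of the stack of semistable coherent sheaves of slope $\nu$ together with base change along the open inclusion of the semistable locus. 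Refined Gysin pullback and proper pushforward then produce the product
\begin{align}
m_{\alpha,\beta}^{\mathsf{ss},\, \nu}\colon A^0(\bHiggs^{\mathsf{ss}}_\alpha)\otimes A^0(\bHiggs^{\mathsf{ss}}_\beta) \to A^0(\bHiggs^{\mathsf{ss}}_{\alpha+\beta}).
\end{align}
Associativity is proved exactly as in Theorem \ref{theorem:defproduct}, by comparing two three-step correspondences, each coming from a global regular embedding in a product of smooth varieties.

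For the second assertion, I would compare the two convolution diagrams for $\Higgs^\nu(X)$ and $\Higgs^{\mathsf{ss},\, \nu}(X)$: the latter is obtained from the former by base change along the open immersions $\bHiggs^{\mathsf{ss}}_\gamma \hookrightarrow \bHiggs_\gamma$ (for $\gamma=\alpha,\beta,\alpha+\beta$), using the identity $q_{\alpha,\beta}^{-1}(\bHiggs^{\mathsf{ss}}_\alpha \times \bHiggs^{\mathsf{ss}}_\beta) = p_{\alpha,\beta}^{-1}(\bHiggs^{\mathsf{ss}}_{\alpha+\beta})$ established above. By the base change compatibility of refined Gysin pullbacks with open pullbacks and with proper pushforwards (see \cite[Theorem~6.2 (a), (b)]{book:fulton1998}), the restriction map commutes with both $(q_{\alpha,\beta})^!$ and $(p_{\alpha,\beta})_\ast$. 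Combining these two compatibilities yields that $\COHA_{\Higgs^\nu(X)} \to \COHA_{\Higgs^{\mathsf{ss},\,\nu}(X)}$ intertwines the multiplications.

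The main technical point, which I expect to be the mildest of the obstacles, is checking that the refined Gysin construction is unaffected by moving from local atlases of $\bHiggsa$ to the globally presented semistable locus; this is where having $\bHiggs^{\mathsf{ss}}_\alpha$ as a global quotient stack simplifies matters substantially, and where the compatibility in the locally-defined case (already established in the course of Theorem~\ref{theorem:defproduct}) guarantees that the globally-defined product agrees with the restriction of the one on $\COHA_{\Higgs^\nu(X)}$.
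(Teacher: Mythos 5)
Your proposal follows essentially the same route as the paper, which disposes of this corollary in a single remark: since $\Higgs^{\mathsf{ss},\,\nu}(X)$ is a full abelian subcategory of $\Higgs(X)$ stable under extensions, the construction of Theorem~\ref{theorem:defproduct} carries over verbatim (with the simplification, recorded there in a footnote, that $\bHiggs^{\mathsf{ss}}_{\alpha}$ is a global quotient stack, so no passage to local charts is needed), and the homomorphism statement follows from the compatibility of refined Gysin pullbacks with proper pushforward and open restriction, \cite[Theorem~6.2~(a),(b)]{book:fulton1998}. Your preliminary identity $q_{\alpha,\beta}^{-1}(\bHiggs^{\mathsf{ss}}_{\alpha}\times\bHiggs^{\mathsf{ss}}_{\beta})=p_{\alpha,\beta}^{-1}(\bHiggs^{\mathsf{ss}}_{\alpha+\beta})$ is exactly the right point to isolate: the inclusion $\subseteq$ is the extension-closedness needed to define the correspondence, while the inclusion $\supseteq$ is what makes the restricted $p_{\alpha,\beta}$ proper (as a base change of a proper map along an open immersion) and what makes the open restriction intertwine the products.

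One assertion in your write-up is wrong as stated, though easily repaired: the global quotient presentation of $\bHiggs^{\mathsf{ss}}_{\alpha}$ is \emph{not} obtained from a presentation of the stack of semistable coherent sheaves of slope $\nu$, because semistability of a Higgs sheaf $(\Fcal,\theta)$ only tests $\theta$-invariant subsheaves, so the underlying sheaf $\Fcal$ may well be unstable as a coherent sheaf. What one uses instead is boundedness of the family of semistable Higgs sheaves of class $\alpha$: this places $\bHiggs^{\mathsf{ss}}_{\alpha}$ as an open substack of a single chart $\bHiggsaL=[T^\ast_{\GaL}\QaL/\GaL]$ for a suitable $\Lcal$, and (via Lemma~\ref{lem:standard}) places the whole semistable correspondence inside a single $\bHiggstildeabLLp$. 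After that substitution your argument --- restriction of the same ambient l.c.i.\ embedding $\Phi$, refined Gysin pullback, proper pushforward, and Fulton's base-change compatibilities --- goes through unchanged.
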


One may likewise consider equivariant versions of all the above, with respect to the action of the multiplcative group $T=\mathbb{G}_m$ on $\bHiggsa$ by $z \cdot (\Fcal,\theta)\coloneqq(\Fcal,z\,\theta)$, and get in this fashion \emph{equivariant} $A$-homological Hall algebras
\begin{align}
\COHA'^{\,T}_{\Higgs(X)}\ , \qquad \COHA'^{\, T}_{\Higgs^{\mathsf{nilp}}(X)}\ , \quad \COHA^T_{\Higgs(X)}\ , \quad \COHA^T_{\Higgs^{\mathsf{nilp}}(X)}\ , \quad \COHA^T_{\Higgs^{\mathsf{ss},\, \nu}(X)}\ .
\end{align}
These are, by construction, modules over the ring $A_T^\ast(\pt) \simeq A^\ast(\pt)[[c_1(t)]]$, where $c_1(t)$ is the first Chern class of the tautological character of $T$, see e.g. \cite[Appendix~A]{art:minets2018}. 

\begin{proposition}\label{prop:localizationT} 
The direct image morphism is an isomorphism of \emph{localized} algebras
\begin{align}
\COHA^T_{\Higgs^{\mathsf{nilp}}(X)} \otimes_{A_T(\pt)} \Frac(A_T(\pt)) \stackrel{\sim}{\to} \COHA^T_{\Higgs(X)} \otimes_{A_T(\pt)} Frac(A_T(\pt))\ .
\end{align}
\end{proposition}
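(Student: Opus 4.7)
The strategy is to apply the equivariant localization theorem in its standard form: if a torus $T$ acts on a sufficiently nice (ind-)stack $\Xscr$ with \emph{empty} $T$-fixed locus, then $A^{0,T}_\ast(\Xscr)\otimes_{A_T(\pt)}\Frac(A_T(\pt))=0$. Setting $\Ucal_\alpha\coloneqq\bHiggsa\setminus\bLama$, the closed-open decomposition $\bLama\hookrightarrow\bHiggsa\hookleftarrow\Ucal_\alpha$ yields a localization long exact sequence
\begin{equation*}
\cdots\to A^{0,T}_{\ast+1}(\Ucal_\alpha)\to A^{0,T}_\ast(\bLama)\xrightarrow{(r_\alpha)_\ast} A^{0,T}_\ast(\bHiggsa)\to A^{0,T}_\ast(\Ucal_\alpha)\to\cdots
\end{equation*}
so that it suffices to establish the vanishing of $A^{0,T}_\ast(\Ucal_\alpha)\otimes_{A_T(\pt)}\Frac(A_T(\pt))$.

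The key geometric input is that $\Ucal_\alpha$ is $T$-stable and carries \emph{no} $T$-fixed points. Indeed, a $k$-point $(\Fcal,\theta)\in\bHiggsa(k)$ is $T$-fixed (in the stacky sense) precisely when the scaling $z\cdot(\Fcal,\theta)=(\Fcal,z\theta)$ lifts to a bona fide $T$-action on the pair, equivalently when there exists a weight decomposition $\Fcal=\bigoplus_{i\in\Z}\Fcal_i$ satisfying $\theta(\Fcal_i)\subseteq\Fcal_{i+1}\otimes\omega_X$ for every $i$. Since $\Fcal$ is coherent, only finitely many weight components are nonzero; hence a sufficiently high power of $\theta$ vanishes and $(\Fcal,\theta)$ is nilpotent. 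Therefore $(\bHiggsa)^T\subseteq\bLama$, i.e.\ $(\Ucal_\alpha)^T=\emptyset$.

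To transfer this emptiness into a cohomological vanishing we pass to local charts: for each line bundle $\Lcal$ one has $\bHiggs_\alpha^{>\Lcal}\simeq[T^\ast_G X/G]$ as in Section~\ref{sec:statificationHiggs}, and $\Ucal_\alpha\cap\bHiggs_\alpha^{>\Lcal}$ lifts to a $(G\times T)$-stable locally closed subvariety $\Ucal\subset T^\ast X$ whose image in the quotient carries no $T$-fixed points. The equivariant localization theorem for OBM theories on quotient stacks (\emph{cf.}\ \cite[Appendix~A]{art:minets2018} and \cite{art:hellermalagonlopez2013}, relying on the fact that the $T$-equivariant class of a $T$-variety without fixed points is killed by some nonzero element of $A_T(\pt)$) then yields the vanishing on each chart. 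The directed system $\langle\bHiggsaL,h_{\Lcal,\Lcal',\alpha}\rangle$ together with the admissibility condition used to define $A^0_\ast$ propagates the vanishing to the ind-stack $\Ucal_\alpha$, delivering the desired isomorphism of $\Frac(A_T(\pt))$-modules. That the resulting isomorphism respects the algebra structure is immediate from Corollary~\ref{cor:Higgsvariantnilp}.

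\textbf{Main obstacle.} The geometric Step~2 (identification of $(\bHiggsa)^T$) is essentially formal. The true technical heart is the equivariant localization step: the classical statement applies to $T$-varieties of finite type, whereas $\Ucal_\alpha$ is only an ind-algebraic stack. Controlling the inductive limit $\lim_\to A^{0,T}_\ast(\Ucal_\alpha\cap\bHiggs_\alpha^{>\Lcal})$ after localization, and checking that the support condition defining $A^0$ is preserved by the long exact sequence of the pair, is the most delicate point and requires careful bookkeeping on the charts.
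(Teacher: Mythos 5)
Your proof is, at its core, the same equivariant concentration argument the paper uses: both rest on the observation that the $T$-fixed locus of $\bHiggsa$ (for the action scaling the Higgs field) lies inside $\bLama$, and both ultimately reduce to the local charts $\bHiggsaL \simeq [T^\ast_{\GaL}\QaL/\GaL]$. The paper's proof is terser: it fixes a chart, notes that it suffices to show that
\begin{align}
A^{T\times\GaL}\big((T^\ast_{\GaL}\QaL)^{\mathsf{nilp}}\big)\otimes_{A_T(\pt)}\Frac(A_T(\pt)) \longrightarrow A^{T\times\GaL}\big(T^\ast_{\GaL}\QaL\big)\otimes_{A_T(\pt)}\Frac(A_T(\pt))
\end{align}
is an isomorphism, and invokes the relative concentration theorem of \cite[Corollary~6.3]{art:minets2018}, whose geometric input is simply that the scheme-theoretic $T$-fixed locus of the atlas $T^\ast_{\GaL}\QaL$ is the zero section $\{\theta=0\}$, which sits inside the nilpotent locus. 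Your weight-decomposition analysis of stacky fixed points is correct but more than is needed: after passing to atlases, which is where the localization theorem is actually applied, the honest fixed locus is just $\theta=0$.

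Two caveats. First, the localization long exact sequence you write down exists for Borel-Moore homology (and for $G$-theory) but not for an arbitrary free OBM theory such as Chow groups or algebraic cobordism, where one only has the right-exact sequence $A(\bLama)\to A(\bHiggsa)\to A(\bHiggsa\setminus\bLama)\to 0$. Since the proposition is stated for general $A$, in that generality your argument yields only surjectivity of the localized pushforward, not injectivity. The standard repair --- and in effect what the cited relative localization theorem encodes --- is to apply the absolute concentration theorem to both $T^\ast_{\GaL}\QaL$ and $(T^\ast_{\GaL}\QaL)^{\mathsf{nilp}}$ separately (they have the same $T$-fixed locus, namely the zero section $\QaL$), conclude that both localized pushforwards from $A^{T\times\GaL}(\QaL)$ are isomorphisms, and hence that the map between them is one as well; no long exact sequence is required. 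Second, you correctly identify that propagating the vanishing through the ind-structure and the $A^0$ support condition is where the real bookkeeping lies; the paper sidesteps this by proving the isomorphism chart by chart and relying on compatibility with the transition maps $h_{\Lcal,\Lcal',\alpha}$, which is essentially what your final paragraph proposes.
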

\begin{proof} 
Fix a line bundle $\Lcal$ and $\alpha \in (\Z^2)^+$. Let $(T^\ast_{\GaL}\QaL)^{\mathsf{nilp}} \subset T^\ast_{\GaL}\QaL$ be the closed $\GaL$-subvariety such that
\begin{align}
\bLama^{>\Lcal}\coloneqq\big[(T^\ast_{\GaL}\QaL)^{\mathsf{nilp}}  / \GaL\big]\simeq \bHiggsa^{>\Lcal} \underset{\bHiggsa}{\times} \bLama\ .
\end{align}
It is enough to show that the direct image morphism
\begin{align}
A^{T \times \GaL}((T^\ast_{\GaL}\QaL)^{\mathsf{nilp}})\otimes_{A_T(\pt)} \Frac(A_T(\pt)) \to A_{T \times \GaL}(T^\ast_{\GaL}\QaL)\otimes_{A_T(\pt)} \Frac(A^T(\pt))
\end{align}
is an isomorphism. This follows from the same argument as in \cite[Corollary~6.3]{art:minets2018}.
\end{proof}

The above proposition allows one to deduce certain properties of $\COHA^T_{\Higgs(X)}$ from the geometry of $\displaystyle\bLam\coloneqq\bigsqcup_{\alpha} \bLama$, which is sometimes more agreable than that of $\displaystyle\bHiggs\coloneqq\bigsqcup_{\alpha} \bHiggsa$. The results in the next two sections provide an illustration of this principle.

\bigskip\section{Torsion-freeness}\label{sec:torsionfreeness}

In this section and in the next, we take $A=H_\ast$, the usual Borel-Moore homology with rational coefficients (most results also hold for Chow groups with appropriate modifications). In addition, we will focus on the cohomological Hall algebras $\COHA^T_{\Higgs^{\mathsf{nilp}}(X)}$ and $\COHA_{\Higgs^{\mathsf{nilp}}(X)}$, which we denote simply by $\COHA^T_{\bLam}$ and $\COHA_{\bLam}$.
 
\subsection{The universal cohomology ring of $\bCoh$} 

For any $\alpha \in (\Z^2)^+$ there is an action of $H^\ast(\bCoha)$ on $H_\ast(\bHiggsa)$ defined by $c \cdot u\coloneqq r_{\alpha}^\ast(c) \cap u$, where $r_{\alpha}\colon \bHiggsa \to \bCoha$ is the canonical projection. The ring $H^\ast(\bCoha)$ being freely generated by tautological classes (see Theorem \ref{T:Heinloth}), it is independent of $\alpha$ (strictly speaking, this is true only for $\rk(\alpha) >0$). In this paragraph, we consider a universal version $\HH$ of this ring, and endow the algebras $\COHA_{\bLam}, \COHA_{\Higgs(X)}$ and $\COHA_{\Higgs^{\mathsf{ss},\, \nu}(X)}$ with $\HH$-module algebra structures.

Recall our fixed  basis $\Pi=\{1, \pi_1, \ldots, \pi_{2g}, \varpi\}$ of $H^\ast(X)$, with $1 \in H^0(X), \pi_1, \ldots, \pi_{2g} \in H^1(X)$ and $\varpi \in H^2(X)$. Let $\HH\coloneqq\Q[c_{i,\pi}]_{i,\pi}$ be the graded free supercommutative algebra generated by elements $c_{i,\pi}$ with $i \geq 1, \pi \in \Pi$. The degree of $c_{i,\pi}$ is defined to be $\deg(c_{i,\pi})=2(i-1)+\deg(\pi)$. Note that we include the degree zero element $c_{1,1}$. For any $\alpha$, there is a surjective morphism $a_\alpha\colon \HH \to H^\ast(\bCoha)$ defined by $a_\alpha(c_{i,\pi})=c_{i,\pi}(\Efrak_\alpha)$, where the classes $c_{i,\pi}(\Efrak_\alpha)$ are defined in \eqref{eq:chernclasses}. The kernel of the map $a_\alpha$ is the ideal generated by the element $c_{1,1}-deg(\alpha)$ whenever $\rk(\alpha)>0$. Via the map $a_\alpha$, the ring $\HH$ acts on $\COHA_{\Higgs(X)}$, preserving the class $\alpha$ (but shifting the cohomological degree). This action factors through to an action on $\COHA_{\Higgs^{\mathsf{ss},\, \nu}(X)}$, and there is a compatible action on $\COHA_{\bLam}$.

Let $\alpha_1, \alpha_2 \in (\Z^2)^+$ and set $\alpha=\sum_i \alpha_i$. Define a morphism
\begin{align}
\Delta_{\alpha_1,\, \alpha_2}\colon H^\ast(\bCoha) \to H^\ast(\bCoh_{\alpha_1}) \otimes  H^\ast(\bCoh_{\alpha_2})
\end{align}
as the pullback by the direct sum morphism 
\begin{align}
\bigoplus_{\alpha_1,\, \alpha_2}\colon \bCoh_{\alpha_1} \times \bCoh_{\alpha_2} \to \bCoh_{\alpha}\ ,\quad (\Fcal_1, \Fcal_2) \mapsto \Fcal_1 \oplus \Fcal_2\ .
\end{align}

\begin{lemma} 
There exists a (unique) coassociative coproduct $\Delta\colon \HH \to \HH \otimes \HH$ such that, for any $\alpha_1, \alpha_2$ we have
\begin{align}
(a_{\alpha_1} \otimes a_{\alpha_2} ) \circ \Delta= \Delta_{\alpha_1,\, \alpha_2} \circ a_{\alpha_1+\alpha_2} \ . 
\end{align}
Equipped with this coproduct, $\HH$ is a graded commutative and cocommutative Hopf algebra.
\end{lemma}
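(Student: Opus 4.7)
The natural candidate for $\Delta$ is obtained by promoting the Whitney sum formula to a universal recipe on the polynomial generators $c_{i,\pi}$. Let $m_{\pi,\pi'}^{\pi''} \in \Q$ denote the structure constants of $\cup$ on $H^\ast(X)$ in the basis $\Pi$, so that $\pi \cup \pi' = \sum_{\pi''} m_{\pi,\pi'}^{\pi''}\pi''$. The input is that ${\bigoplus_{\alpha_1,\alpha_2}}^{\!\!\ast}\Efrak_{\alpha_1+\alpha_2} \simeq \mathsf{pr}_1^\ast\Efrak_{\alpha_1}\oplus \mathsf{pr}_2^\ast\Efrak_{\alpha_2}$, so the Whitney formula on $\bCoh_{\alpha_1}\times \bCoh_{\alpha_2}\times X$ combined with the Künneth decomposition \eqref{eq:chernclasses} yields, after tracking the Koszul signs $\varepsilon(\pi,\pi')\in\{\pm 1\}$ produced by commuting $\pi$ past a class of degree $2k-\deg(\pi')$, an identity of the shape
\begin{align}
\Delta_{\alpha_1,\alpha_2}\bigl(c_{i,\pi''}(\Efrak_{\alpha_1+\alpha_2})\bigr) \;=\; \sum_{j+k=i}\sum_{\pi,\pi'} \varepsilon(\pi,\pi')\, m_{\pi,\pi'}^{\pi''}\, c_{j,\pi}(\Efrak_{\alpha_1})\otimes c_{k,\pi'}(\Efrak_{\alpha_2})\ ,
\end{align}
with the usual conventions $c_{0,1}=1$ and $c_{0,\pi}=0$ for $\pi\neq 1$. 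The coefficients on the right-hand side are purely topological invariants of $X$ and independent of $\alpha_1,\alpha_2$.

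This allows me to define $\Delta$ universally by the same formula at the level of generators,
\begin{align}
\Delta(c_{i,\pi''})\;\coloneqq\; \sum_{j+k=i}\sum_{\pi,\pi'} \varepsilon(\pi,\pi')\, m_{\pi,\pi'}^{\pi''}\, c_{j,\pi}\otimes c_{k,\pi'}\ ,
\end{align}
and extending by the supermultiplicative rule; this is well-defined precisely because $\HH$ is freely generated as a graded supercommutative algebra by the $c_{i,\pi}$. The compatibility $(\pi_{\alpha_1}\otimes\pi_{\alpha_2})\circ\Delta = \Delta_{\alpha_1,\alpha_2}\circ\pi_{\alpha_1+\alpha_2}$ then holds on generators by construction, and extends to all of $\HH$ since all four maps are algebra morphisms ($\Delta_{\alpha_1,\alpha_2}$ being a pullback). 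Uniqueness of $\Delta$ is forced by choosing any pair $\alpha_1,\alpha_2$ of positive ranks, for which $\pi_{\alpha_1}\otimes\pi_{\alpha_2}$ is an isomorphism by Theorem~\ref{T:Heinloth}.

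Coassociativity of $\Delta$ on generators reduces to associativity of $\cup$ on $H^\ast(X)$: both $(\Delta\otimes\mathsf{id})\circ\Delta$ and $(\mathsf{id}\otimes\Delta)\circ\Delta$ applied to a single $c_{i,\pi''}$ yield the same triple Whitney expansion governed by iterated cup-product constants $\sum_\tau m_{\pi_1,\pi_2}^{\tau}m_{\tau,\pi_3}^{\pi''}=\sum_\tau m_{\pi_2,\pi_3}^{\tau}m_{\pi_1,\tau}^{\pi''}$, and this identity propagates to all of $\HH$ by multiplicativity. Cocommutativity is immediate from the manifest symmetry of the formula combined with the Koszul signs. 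A counit $\varepsilon\colon \HH\to \Q$ is given by projection onto the degree-zero summand, and since every generator has strictly positive degree $2i-\deg(\pi) > 0$ — the constraints $(i\geq 2)$ or $(i=1,\pi\neq\varpi)$ precisely exclude degree-zero generators — $\HH$ is a connected positively graded supercommutative supercocommutative bialgebra. A unique antipode then exists by the standard convolution-inverse induction, completing the Hopf algebra structure.

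The only genuinely delicate step in this plan is the careful bookkeeping of the Koszul signs $\varepsilon(\pi,\pi')$ in the Künneth identification, which must be organized consistently both in the definition of $\Delta$ on generators and in the verification of coassociativity; everything else is formal, following from Whitney multiplicativity of Chern classes and general properties of connected graded Hopf algebras.
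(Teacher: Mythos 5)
The paper offers no argument here --- the lemma is explicitly ``left to the reader'' --- so your proposal can only be judged on its own terms. Your overall strategy (Whitney formula plus K\"unneth to compute $\Delta_{\alpha_1,\alpha_2}$ on tautological classes, freeness of $\HH$ to define $\Delta$ on generators, uniqueness from injectivity of $\pi_{\alpha_1}\otimes\pi_{\alpha_2}$ in positive rank, and the standard connected-graded-bialgebra argument for the antipode) is the natural one, and the formal parts are fine. However, there is a genuine gap in the central step: the claimed expansion of $\Delta_{\alpha_1,\alpha_2}\bigl(c_{i,\pi''}(\Efrak_{\alpha_1+\alpha_2})\bigr)$ does \emph{not} have coefficients independent of $(\alpha_1,\alpha_2)$. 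The sum you write down involves the symbols $c_{1,\varpi}$, which are not generators of $\HH$; you dispose of the $c_{0,\pi}$ terms by convention but say nothing about these, and the class $c_{1,\varpi}(\Efrak_{\alpha})\in H^0(\bCoha)=\Q$ is the scalar $\deg(\alpha)$. Concretely, for $i\geq 2$ and $\pi''=\varpi$ the Whitney expansion contains $c_{i-1,1}(\Efrak_{\alpha_1})\otimes c_{1,\varpi}(\Efrak_{\alpha_2})+c_{1,\varpi}(\Efrak_{\alpha_1})\otimes c_{i-1,1}(\Efrak_{\alpha_2})=\deg(\alpha_2)\,c_{i-1,1}(\Efrak_{\alpha_1})\otimes 1+\deg(\alpha_1)\,1\otimes c_{i-1,1}(\Efrak_{\alpha_2})$. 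Since $\pi_{\alpha_1}\otimes\pi_{\alpha_2}$ is injective whenever both ranks are positive, no single element of $\HH\otimes\HH$ can map to this for all choices of degrees, so the coproduct you define on the generators $c_{i,\pi}$ fails the required intertwining identity (a rank-one, genus-zero computation with line bundles of degrees $(0,0)$ versus $(1,0)$ already exhibits the discrepancy).

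The repair is to change generators. Over $\Q$ the ring $H^\ast(\bCoha)$ (for $\rk(\alpha)>0$) is equally freely generated by the K\"unneth components $\mathsf{ch}_{i,\pi}(\Efrak_\alpha)$ of the Chern character (for $i\geq 2$, all $\pi$, and $i=1$, $\pi\neq\varpi$), and one should present $\HH$ and the maps $\pi_\alpha$ in these \emph{additive} generators. Since $\mathsf{ch}$ is additive under direct sums, one gets $\Delta_{\alpha_1,\alpha_2}\bigl(\mathsf{ch}_{i,\pi}(\Efrak_{\alpha_1+\alpha_2})\bigr)=\mathsf{ch}_{i,\pi}(\Efrak_{\alpha_1})\otimes 1+1\otimes\mathsf{ch}_{i,\pi}(\Efrak_{\alpha_2})$ with no cross terms, so $\Delta$ is defined by declaring the generators primitive, and the rest of your argument (uniqueness, coassociativity, cocommutativity, counit and antipode via connectedness) goes through unchanged --- indeed more easily, with no sign bookkeeping. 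Be aware that the change of variables between the $c_{i,\pi}$ and the $\mathsf{ch}_{i,\pi}$ is itself $\alpha$-dependent (again through $c_{1,\varpi}=\deg\alpha$), which is exactly why the statement is sensitive to the choice of presentation; this does not affect the rest of the paper, which only uses the abstract Hopf structure and the geometric maps $\Delta_{\alpha_1,\alpha_2}$ themselves.
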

\begin{proof} 
The map $\Delta_{\alpha_1,\alpha_2}$ is nicely compatible with tautological classes as 
\begin{align}
{\bigoplus_{\alpha_1,\, \alpha_2}}^{\!\!\ast}(\Efrak_\alpha)\simeq \mathsf{pr}_1^\ast(\Efrak_{\alpha_1}) \oplus
\mathsf{pr}_2^\ast(\Efrak_{\alpha_2})\ ,
\end{align}
where $\mathsf{pr}_i\colon \bCoh_{\alpha_1} \times \bCoh_{\alpha_2} \to \bCoh_{\alpha_i}$ is the projection for $i=1, 2$. 
In particular, 
\begin{align}
\Delta_{\alpha_1,\, \alpha_2}(c_{l}(\Efrak_\alpha))=\sum_{i + j =l}c_{i}(\Efrak_{\alpha_1}) \otimes_{H^*(X)} c_{j}(\Efrak_{\alpha_2})\ ,
\end{align}
where we use the convention that $c_0(\Efrak)=1$. Taking K\"unneth components yields
\begin{align}
\Delta_{\alpha_1,\, \alpha_2}(c_{l,\varpi}(\Efrak_\alpha)) &=\sum_{i + j =l}c_{i,\varpi}(\Efrak_{\alpha_1}) \otimes c_{j,\varpi}(\Efrak_{\alpha_2})\ ,\\[2pt]
\Delta_{\alpha_1,\, \alpha_2}(c_{l,\pi_u}(\Efrak_\alpha)) &=\sum_{i + j =l} \left(c_{i,\pi_u}(\Efrak_{\alpha_1}) \otimes c_{j,\varpi}(\Efrak_{\alpha_2}) + c_{i,\varpi}(\Efrak_{\alpha_1}) \otimes c_{j,\pi_u}(\Efrak_{\alpha_2})\right)\ ,\\[2pt]
\Delta_{\alpha_1,\, \alpha_2}(c_{l,1}(\Efrak_\alpha))& =\sum_{i + j =l}\big(c_{i,1}(\Efrak_{\alpha_1}) \otimes c_{j,\varpi}(\Efrak_{\alpha_2}) + c_{i,\varpi}(\Efrak_{\alpha_1}) \otimes c_{j,1}(\Efrak_{\alpha_2}) + \sum_u c_{i,\pi_u}(\Efrak_{\alpha_1}) \otimes c_{j,\pi_u^*}(\Efrak_{\alpha_2})\big)\ .
\end{align}
We formally define the comultiplication $\Delta$ on $\HH$ using the above formulas. The fact that it is an algebra morphism follows from the compatibility of pull-backs with cup products.
\end{proof}

We will say that an $\HH$-module $\mathbf{M}$ is an $\HH$-module algebra if $\mathbf{M}$ is equipped with an associative algebra structure $m\colon \mathbf{M} \otimes \mathbf{M} \to \mathbf{M}$ such that for any $h \in \HH$, $x,y \in \mathbf{M}$ it holds $h \cdot m(x\otimes y)=m(\Delta(h) \cdot (x \otimes y))$.

\begin{proposition}\label{prop:Hmodule} 
The algebras $\COHA_{\Higgs(X)}, \COHA_{\bLam}$ and $\COHA_{\Higgs^{\mathsf{ss},\, \nu}(X)}$ are $\HH$-module algebras.
\end{proposition}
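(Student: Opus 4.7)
The aim is to verify the module-algebra axiom
\begin{align}
h \cdot m_{\alpha, \beta}(x \otimes y) = m_{\alpha, \beta}\bigl(\Delta(h) \cdot (x \otimes y)\bigr), \qquad h \in \HH,
\end{align}
for all three algebras. The argument for $\COHA_{\Higgs(X)}$ is the main case; the nilpotent and semistable versions then follow either by replaying it on the corresponding convolution diagrams, or by functoriality along the algebra homomorphisms of Corollaries~\ref{cor:Higgsvariantnilp} and \ref{cor:Higgsvariantss}.

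The core ingredient is the identity
\begin{align}
p_{\alpha, \beta}^\ast\, \pi_{\alpha+\beta}(h) = q_{\alpha, \beta}^\ast\, (\pi_\alpha \otimes \pi_\beta)(\Delta(h)) \qquad \text{in } H^\ast(\bCohtildeab).
\end{align}
It follows from the short exact sequence of tautological sheaves $0 \to \widetilde{\Efrak}_\beta \to \widetilde{\Efrak}_{\alpha+\beta} \to \widetilde{\Efrak}_\alpha \to 0$ on $\bCohtildeab \times X$, together with the relations $p_{\alpha,\beta}^\ast \Efrak_{\alpha+\beta} = \widetilde{\Efrak}_{\alpha+\beta}$ and $q_{\alpha,\beta}^\ast \mathsf{pr}_i^\ast \Efrak_\bullet = \widetilde{\Efrak}_\bullet$ from Section~\ref{sec:COHAcoh}. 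Indeed, the Whitney sum formula gives $c(\widetilde{\Efrak}_{\alpha+\beta}) = c(\widetilde{\Efrak}_\alpha)\,c(\widetilde{\Efrak}_\beta)$ in $H^\ast(\bCohtildeab \times X)$; decomposing in the K\"unneth basis $1 \otimes \Pi$ along $X$ yields the identity on the generators $c_{i, \pi}$ of $\HH$, and multiplicativity of $\pi_\gamma$, $p^\ast_{\alpha, \beta}$, $q^\ast_{\alpha, \beta}$ and $\Delta$ extends it to all of $\HH$.

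To deduce the module-algebra axiom, we work in the local charts of Section~\ref{sec:localdescriptionHiggstilde}, where $m_{\alpha, \beta}^{\Lcal, \Lcal'}(x \otimes y) = \Psi_{G, \ast}\Phi^!(x \boxtimes y)$. The forgetful map $\bHiggstildeab \to \bCohtildeab$ intertwines $p_{\alpha, \beta}$ and $q_{\alpha, \beta}$ on both sides, so the identity above pulls back to an identity in $H^\ast(\bHiggstildeab)$ relating the cohomology classes that implement the $\HH$-actions on the three stacks. Starting from $h \cdot m_{\alpha, \beta}^{\Lcal, \Lcal'}(x \otimes y) = r_{\alpha+\beta}^\ast\pi_{\alpha+\beta}(h) \cap \Psi_{G, \ast}\Phi^!(x \boxtimes y)$, the projection formula for $\Psi_{G, \ast}$ moves the class across the proper pushforward; substituting the pulled-back identity converts it into a class pulled back via $\Phi_G$; and the compatibility of $\Phi^!$ with cap products of pulled-back classes (which here reduces to the standard flat pullback/cap product compatibility, since $\Phi$ is a regular morphism between smooth varieties, see \cite[Theorem~6.2]{book:fulton1998}) moves it inside $\Phi^!$. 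The result is exactly $m_{\alpha, \beta}^{\Lcal, \Lcal'}(\Delta(h) \cdot (x \otimes y))$. Compatibility across local charts is supplied by Theorem~\ref{theorem:defproduct}.

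The main technical obstacle is bookkeeping: one must verify that the tautological sheaves on the local atlas $[Z^\circ_G / G]$ agree with the pullbacks of $\widetilde{\Efrak}_\bullet$ from $\bCohtildeab$ under the natural forgetful map, and that the refined Gysin pullback enjoys the required cap-product compatibility in the equivariant, stacky setting. Both points are essentially built into the constructions of Sections~\ref{sec:localdescriptionHiggstilde} and~\ref{sec:BM}, but should be checked explicitly.
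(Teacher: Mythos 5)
Your proposal is correct and follows essentially the same route as the paper's proof: the projection formula for the proper pushforward, the identity $p_{\alpha,\beta}^\ast\,\pi_{\alpha+\beta}(h)=q_{\alpha,\beta}^\ast\,(\pi_\alpha\otimes\pi_\beta)(\Delta(h))$ coming from the short exact sequence of tautological sheaves on the extension stack, and the compatibility of the refined Gysin pullback with cap product by pulled-back classes. The only difference is that you spell out the Whitney-sum justification of the key identity and the local-chart bookkeeping, which the paper leaves implicit.
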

\begin{proof} 
Fix some $\alpha_1, \alpha_2$ and set $\alpha=\alpha_1+\alpha_2$. Let $\gamma \in H^\ast(\bCoha)$ and $u_i \in H^\ast(\bHiggs_{\alpha_i})$ for $i=1,2$.  Keep the notation of Section \ref{sec:COHAcoh}. By definition, $u_1 \cdot u_2= (p_{\alpha_1,\, \alpha_2})_\ast\Phi^!(u_1 \otimes u_2)$. Set $v\coloneqq\Phi^!(u_1 \otimes u_2)$. By the projection formula, we have 
\begin{align}
\gamma \cdot (p_{\alpha_1,\alpha_2})_\ast(v)= (p_{\alpha_1,\,\alpha_2})_\ast((p_{\alpha_1,\,\alpha_2})^\ast r_{\alpha}^\ast(\gamma) \cap v)\ .
\end{align}
Note that 
\begin{align}
 (p_{\alpha_1,\,\alpha_2})^\ast r_{\alpha}^\ast(\gamma)=\Phi_G^\ast (r_{\alpha_1}^\ast \otimes r_{\alpha_2}^\ast) (\Delta_{\alpha_1,\,\alpha_2}(\gamma))\ ,
\end{align}
while by multiplicativity of the Gysin pullback
\begin{align}
\Phi_G^\ast (r_{\alpha_1}^\ast \otimes r_{\alpha_2}^\ast) (\Delta_{\alpha_1,\, \alpha_2}(\gamma)) \cap \Phi^!(u_1 \otimes u_2)=\Phi^!((r_{\alpha_1}^\ast \otimes r_{\alpha_2}^\ast) (\Delta_{\alpha_1,\, \alpha_2}(\gamma)) \cap (u_1 \otimes u_2))\ .
\end{align}
This yields the desired equality. We are done.
\end{proof}

Proposition \ref{prop:Hmodule} has an obvious equivariant avatar. Note that in that case, $\HH$ is replaced by $\HH\otimes \Q[t]$.

\subsection{Torsion-freeness}

In the context of quivers, see \cite[Section~4.4]{art:schiffmannvasserot2017} (or rank zero Higgs sheaves, see \cite[Section~6]{art:minets2018}) the following technical result is crucial in describing the cohomological Hall algebras as \emph{shuffle algebras}. Although we do not give such a realization here, we nevertheless state the following theorem. 
\begin{theorem}\label{T:torsionfree} 
Let $\alpha\in (\Z^2)^+$. Then $H^{0, T}_\ast(\bLama)$ is a torsion-free $H^\ast(\bCoha)\otimes \Q[t]$-module.
\end{theorem}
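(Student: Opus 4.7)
The plan is to use the Jordan type stratification $\bLama = \bigsqcup_{\ual \in J_\alpha} \bLam_\ual$ combined with the equivariant Thom isomorphism on each stratum, and to induct on the partial order $\preceq$ on $J_\alpha$. The essential preliminary observation is that extensions of torsion-free modules over a (super)commutative ring $R$ are torsion-free: in $0 \to M' \to M \to M'' \to 0$ with $M',M''$ torsion-free, a non-zero-divisor $r \in R$ with $rm = 0$ would kill $\bar m \in M''$, forcing $\bar m = 0$ and then $m \in M'$, so $m = 0$. Applied to the $T$-equivariant form of the short exact sequences \eqref{eq:filtration} (whose existence relies on the purity of each $\bLam_\ual$), this reduces the statement to showing that each graded piece $H^{0,T}_\ast(\bLam_\ual)$ is torsion-free over $H^\ast(\bCoha) \otimes \Q[t]$.

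To treat a single stratum $\bLam_\ual$ with $\ual = (\alpha_1,\ldots,\alpha_s)$, I would exploit the iterated vector bundle stack morphism $\pi_\ual \colon \bLam_\ual \to \prod_{i=1}^s \bCoh_{\alpha_i}$ of Proposition~\ref{prop:MS5.2}. Since $T$ acts trivially on the base and fiberwise on $\bLam_\ual$, the equivariant Thom isomorphism yields
\begin{align*}
H^T_\ast(\bLam_\ual) \;\simeq\; \Big(\bigotimes_{i=1}^s H_\ast(\bCoh_{\alpha_i})\Big) \otimes_{\Q} \Q[t]
\end{align*}
up to a cohomological shift. Under this identification, the $H^\ast(\bCoha)$-action is induced by pulling back along the composition of the zero section of $\pi_\ual$ with $r_\alpha\vert_{\bLam_\ual}$; at the level of families this composition sends $(\Fcal_1,\ldots,\Fcal_s)$ to $\bigoplus_i \bigoplus_{k=0}^{i-1} \Fcal_i \otimes \omega_X^{\otimes(-k)}$, consistently with the Jordan type relation $\alpha = \sum_i \sum_{k=0}^{i-1} \alpha_i(-k\ell)$. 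On tautological generators this is an iterated version of the Hopf coproduct $\Delta$ from Section~\ref{sec:torsionfreeness}, twisted by appropriate powers of $\omega_X$.

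The core of the proof is then the purely algebraic statement that, via the resulting pullback map $H^\ast(\bCoha) \to \bigotimes_i H^\ast(\bCoh_{\alpha_i})$, the target becomes a torsion-free module over the source. By Heinloth's Theorem~\ref{T:Heinloth}, both rings are (super)commutative polynomial/symmetric algebras in their tautological classes; the additivity of Chern characters under direct sums together with the multiplicative effect of tensoring by $\omega_X$ permits one to write out the pullback explicitly on generators. After a linear change of variables, the source appears as a sub-polynomial algebra of the target, from which torsion-freeness follows by standard commutative algebra.

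The main obstacle is precisely this last algebraic comparison, which is most delicate in the mixed-rank situation where some $\alpha_i$ have rank zero, so that $H^\ast(\bCoh_{\alpha_i})$ is the symmetric algebra $S^{d_i}(H^\ast(X)[z])$ rather than a freely generated polynomial algebra, and where one must argue carefully with the odd-degree supercommutative generators to confirm torsion-freeness in the strong sense. One also needs to verify throughout the induction that the support condition defining $H^{0,T}_\ast$ is preserved by the zero section, Thom and extension maps, which follows from all morphisms involved being of finite relative dimension.
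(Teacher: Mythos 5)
Your strategy is genuinely different from the paper's, so let me first record what the paper actually does. It introduces the ideal $I=\HH^{+}\otimes\Q[t]$ and proves two statements: (a) the map $H^{T}_\ast(\bLama)\to H^{T}_\ast(\bLama)_{\mathsf{loc},I}$ is injective, for which the filtration \eqref{eq:filtration} is used only through the much softer fact that each graded piece $H^{T}_\ast(\bLam_{\ual})\simeq\bigotimes_i H^\ast_T(\bCoh_{\alpha_i})$ is a free graded $\Q[t]$-module; and (b) the localized pushforward $H^{T}_\ast(\bCoha)_{\mathsf{loc},I}\to H^{T}_\ast(\bLama)_{\mathsf{loc},I}$ is an isomorphism, proved chart by chart on $[\QaL/\GaL]$ by combining Lemma \ref{L:prooftor1} (which matches the $H^\ast(\bCoha)$-action with the $\mathbf{R}_{\GaL}$-action) with the relative $T$-equivariant localization theorem. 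Torsion-freeness then follows because the localization is identified with that of the free module $H^{T}_\ast(\bCoha)$. You instead propose to prove torsion-freeness of each stratum directly, as a module over $H^\ast(\bCoha)\otimes\Q[t]$ acting through the $\omega_X$-twisted iterated coproduct, and to conclude via the (correct) observation that extensions of torsion-free modules are torsion-free. Your reduction to the strata, the equivariant Thom isomorphism, and the identification of the module structure are all sound and consistent with what the paper uses elsewhere (compare the discussion around \eqref{eq:proofgen5}); if completed, your route is arguably more informative, since it describes the $\HH$-module structure of the entire associated graded rather than only of its localization, and it avoids equivariant localization altogether.

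The problem is that the step you flag as ``the main obstacle'' is precisely the step carrying the whole proof, and it is asserted rather than established. ``After a linear change of variables, the source appears as a sub-polynomial algebra of the target'' is not accurate: the twisted coproduct sends $c_{i,\pi}$ to $\sum_j c^{(j)}_{i,\pi}$ plus \emph{nonlinear} corrections (products of generators of lower Chern degree coming from the Whitney formula and from tensoring by powers of $\omega_X$), so what is required is a triangular substitution with respect to the filtration by Chern degree; since its linear part is invertible this does define an automorphism of the free supercommutative algebra, hence freeness of the target over the source --- but only when every $\alpha_i$ has positive rank. When some (or all) $\alpha_i$ have rank zero, $H^\ast(\bCoh_{\alpha_i})=S^{d_i}(H^\ast(X)[z])$ is not freely generated and no change of variables exhibits the image of $H^\ast(\bCoha)$ as part of a free generating set; there one must instead filter $\bigotimes_i H^\ast(\bCoh_{\alpha_i})$ by the degree in the rank-zero factors, check that the associated graded action factors through the positive-rank factors, and use that a module with an exhaustive, bounded-below filtration whose associated graded is torsion-free is itself torsion-free. (Note also that exhibiting the source merely as a subalgebra would not suffice in a supercommutative ring with zero divisors; you need freeness, or at least flatness, of the target over it.) With these points supplied your argument closes, but as written the crucial algebraic torsion-freeness is left unproved.
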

\begin{proof}
Our approach will bear some similarity with those followed in the proofs of \cite[Proposition~4.6]{art:schiffmannvasserot2017} and \cite[Theorem~6.4]{art:minets2018}. To simplify the notation, we shall drop $0$ from $H^{0,  T}_\ast$. 

Let $\HH^+ \subset H^\ast(\bCoha)$ be the graded augmentation ideal of $H^\ast(\bCoha)$ and set $I=\HH^+ \otimes \Q[t] \subset H^\ast(\bCoha)\otimes \Q[t]$. For any $H^\ast(\bCoha)\otimes \Q[t]$-module $M$ we denote by $M_{\mathsf{loc},I}$ the localization of $M$ with respect to the
ideal $I$. We will prove the following two statements:
\begin{enumerate}\itemsep0.2cm
\item[(a)] the natural map $H^T_\ast(\bLama) \to H^T_\ast(\bLama)_{\mathsf{loc},I}$ is injective, i.e $H^T_\ast(\bLama)$ is $I$-torsion-free,
\item[(b)] the direct image morphism $H^T_\ast(\bCoha)_{\mathsf{loc},I} \to H^T_\ast(\bLama)_{\mathsf{loc},I}$ is an isomorphism.
\end{enumerate}
The theorem will follow since $H_\ast^T(\bCoha)$ is evidently torsion free (in fact, free) as a $H^\ast(\bCoha)\otimes \Q[t]$-module (note that $T$ acts trivially on $\bCoha$).

We begin with (a). Recall that we are only considering admissible classes in $H^{T}_\ast(\bLama)$, i.e., classes supported on finitely many irreducible components. Moreover, (see \eqref{eq:filtration} and \eqref{eq:gradedcoh}), there is a $H^\ast(\bCoha)\otimes \Q[t]$-invariant filtration of $H^T_\ast(\bLama)$ by $H^T_\ast(\bLam_{\preceq \ual})$, whose associated graded is 
\begin{align}
\mathsf{gr}(H^{T}_\ast(\bLama))=\bigoplus_{\ual \in J_{\alpha}} H^{T}_\ast(\bLam_{\ual})\ , \quad H^{T}_\ast(\bLam_{\ual}) \simeq \bigotimes_i\, H^\ast_T(\bCoh_{\alpha_i})\ .
\end{align}
In particular, each $H^T_\ast(\bLam_{\ual})$ is a \emph{free} graded $\Q[t]$-module, and hence also $I$-torsion-free. Therefore $H^T_\ast(\bLama)$ is also $I$-torsion-free.

We will check statement (b) locally, by showing that for any line bundle $\Lcal$, the direct image morphism 
\begin{align}\label{eq:prooftortwo}
H^T_\ast(\bCohaL)_{\mathsf{loc},I} \to H^T_\ast(\bLama^{>\Lcal})_{\mathsf{loc},I}
\end{align}
which respect to the closed embedding $\bCohaL\simeq \bLam_{(\alpha)}^{>\Lcal} \hookrightarrow \bLama^{\Lcal}$, is an isomorphism. The isomorphism is preserved by taking the limit with respect to $\Lcal$, indeed we may again consider the $H^\ast(\bCoha)\otimes \Q[t]$-invariant filtration as in case (a) above and argue on each $\bLam_{\preceq \ual}$ (which is an admissible stack). So let us fix a line bundle $\Lcal$. The open substack $\bCohaL$ is isomorphic to the quotient $[\QaL/\GaL]$, while $\bLama^{>\Lcal}=[(T^\ast_{\GaL}\QaL)^{\mathsf{nilp}}  / \GaL]$. Hence $H_\ast(\bCohaL) \simeq H_\ast^{\GaL}(\QaL)$ and $H_\ast(\bLama^{>\Lcal})\simeq H_\ast^{\GaL}((T^\ast_{\GaL}\QaL)^{\mathsf{nilp}})$. In particular, $H_\ast(\bCohaL)$ carries an action of the equivariant cohomology ring $H^\ast_{\GaL}(\pt)\eqqcolon \mathbf{R}_{\GaL}$. There is a similar action of $\mathbf{R}_{\GaL} $ on $H_\ast(\bHiggsaL)$ and on $H_\ast(\bLama^{>\Lcal})$.

We shall need the following result.
\begin{lemma}\label{L:prooftor1} 
For any $\alpha, \Lcal$ there is a surjective algebra morphism $s_{\Lcal,\alpha} \colon H^\ast(\bCoha) \to \mathbf{R}_{\GaL}$ such that for any $\gamma \in  H^\ast(\bCoha)$ and any $c \in H_\ast(\bHiggsa)$ (resp.\ $H_\ast(\bLama)$) we have
\begin{align}
(\gamma \cap c)\vert_{\bHiggsaL}& =s_{\Lcal,\alpha}(\gamma) \cap c\vert_{\bHiggsaL}\ .\\[4pt]
\text{(resp.\ }\ (\gamma \cap c)\vert_{\bLama^{>\Lcal}}& =s_{\Lcal,\alpha}(\gamma) \cap c\vert_{\bLama^{>\Lcal}}\ .\text{ )}
\end{align}
\end{lemma}
\begin{proof} 
Since the $H^\ast(\bCoha)$-action is given via pullback with respect to the morphism $r_\alpha\colon \bHiggsa\to \bCoha$, it suffices to prove the statement of the lemma for $\bCoha$ in place of $\bHiggsa$ or $\bLama$. 

Let $p_\alpha^\Lcal\colon \QaL \times X \to \QaL$ and $p_X\colon \QaL\times X\to X$ denote the two projections.  The action of $\mathbf{R}_{\GaL}$ on $H^{\GaL}_\ast(\QaL)$ is given by cap product by the Chern classes of the tautological $\GaL$-equivariant vector bundle $\VV\coloneqq\mathbb{R}(p_\alpha^\Lcal)_\ast(p_X^\ast\Lcal \otimes \Efrak_\alpha^\Lcal)$, whose fiber over a point $\big[\phi\colon \Lcal\otimes k^{\, \langle \overline{\Lcal},\alpha\rangle}\twoheadrightarrow \Fcal\big]$ is $\Hom(\Lcal, \Fcal)$.  On the other hand, the action of $H^\ast(\bCoha)$ is given by cap product with the K\"unneth components of the Chern classes of the tautological sheaf $\Efrak_\alpha$. Applying the Grothendieck-Riemann-Roch formula \cite[Theorem~15.2]{book:fulton1998} to the morphism $p_\alpha^\Lcal$ and $p_X^\ast\Lcal^\vee \otimes \Efrak_\alpha^\Lcal$ yields an expression for $\mathsf{ch}(\VV)$ in terms of $\mathsf{ch}(\Efrak_\alpha^\Lcal)$ as wanted. Note that $(p_\alpha^\Lcal)_\ast p_X^\ast \Lcal$ is a trivial $\GaL$-equivariant bundle on $\QaL$.
\end{proof}

Put $I_{\GaL}\coloneqq\mathbf{R}_{\GaL}^+ \otimes \Q[t]$, where $\mathbf{R}_{\GaL}^+$ is the graded augmentation ideal of $\mathbf{R}_{\GaL}$. By the relative form of the localization theorem (see e.g. \cite[Proposition~A.13, Theorem~A.14]{art:minets2018}) the localized pushforward map 
\begin{align}
H_\ast^{\GaL \times T}(\QaL)_{\mathsf{loc},I_{\GaL}} = H_\ast^{\GaL \times T}((T^\ast_{\GaL}\QaL)^T)_{\mathsf{loc},I_{\GaL}} \to H_\ast^{\GaL \times T}((T^\ast_{\GaL}\QaL)^{\mathsf{nilp}})_{\mathsf{loc}, I_{\GaL}}
\end{align}
is an isomorphism. By Lemma \ref{L:prooftor1} we have $s_{\Lcal,\alpha}(I)=I_{\GaL}$. This implies \eqref{eq:prooftortwo} and concludes the proof Theorem \ref{T:torsionfree}.
\end{proof}

\bigskip\section{Generation theorem}\label{sec:generation}

In this section, we take again $A=H_\ast$. For any $\alpha \in (\Z^2)^+$ there is a distinguished irreducible component $\bLam_{(\alpha)}$ of $\bLama$, namely the zero section of the projection $r_{\alpha}\colon \bHiggsa \to \bCoha$. Thus $\bLam_{(\alpha)} \simeq \bCoha$. In particular, by Poincar\'e duality,
\begin{align}
H_*(\bLam_{(\alpha)}) \simeq \left\{
\begin{array}{ll}
\Q[c_{i,\pi}(\Efrak_{\alpha}]_{i,\pi} & \text{if }\rk(\alpha)>0\ ,\\[4pt]
S^d(H^\ast(X)[z]) & \text{if } \alpha=(0,d)\ .
\end{array}
\right.
\end{align}
The following is an analog of \cite[Theorem~B (e)]{art:schiffmannvasserot2017}. It is interesting that the proof, though similar, is simpler in the curve case than in the quiver case as the structure of $\bLama$ is simpler than that of the Lusztig nilpotent stack.

\begin{theorem}\label{T:gen} 
For $A=H_\ast$, the algebra $\COHA_{\bLam}$ is generated by the collection of subspaces $H_\ast(\bLam_{(\alpha)})$ for $\alpha \in (\Z^2)^+$.
\end{theorem}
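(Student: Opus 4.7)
The plan is to proceed by induction on the Jordan type $\ual \in J_\alpha$ with respect to the partial order $\preceq$, using the filtration of $H_\ast(\bLama)$ by the closed substacks $\bLam_{\preceq \ual}$ from Proposition~\ref{prop:preceq}. Let $\mathbf{G}$ denote the subalgebra of $\COHA_{\bLam}$ generated by $\bigoplus_{\alpha} H_\ast(\bLam_{(\alpha)})$. I will prove $H_\ast(\bLam_{\preceq \ual}) \subset \mathbf{G}$ for every $\ual$, which implies the theorem. The base case $\ual=(\alpha)$ is immediate, as $\bLam_{\preceq(\alpha)}=\bLam_{(\alpha)}$ is a generator. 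For the inductive step, fix $\ual = (\alpha_1, \ldots, \alpha_s)$ with $s \geq 2$ and assume the claim for all $\underline{\beta} \prec \ual$; then the short exact sequence~\eqref{eq:filtration} together with the inductive hypothesis reduces the problem to lifting any class of $H_\ast(\bLam_\ual)$ to an element of $\mathbf{G} \cap H_\ast(\bLam_{\preceq \ual})$.

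To achieve this, set $\gamma_k \coloneqq \bigl(\sum_{i=k}^s \alpha_i\bigr)(-(k-1)\ell)$ for $k=1,\ldots,s$. These are precisely the classes of the successive subquotients $\Fcal_{k-1}/\Fcal_k$ in the canonical filtration of a Higgs sheaf of Jordan type $\ual$, and each of them carries zero induced Higgs field (since $\theta(\Fcal_{k-1}) \subset \Fcal_k \otimes \omega_X$), so that $\Fcal_{k-1}/\Fcal_k \in \bLam_{(\gamma_k)} \simeq \bCoh_{\gamma_k}$. I then consider the iterated product
\begin{align}
M_\ual \colon H_\ast(\bLam_{(\gamma_1)}) \otimes \cdots \otimes H_\ast(\bLam_{(\gamma_s)}) \longrightarrow H_\ast(\bLam_\alpha), \qquad u_1 \otimes \cdots \otimes u_s \longmapsto u_1 \cdot u_2 \cdots u_s\ .
\end{align}
By associativity, $M_\ual$ is realized by the $s$-fold iterated Hecke correspondence $\widetilde{\bLam}_{\gamma_\bullet}$ restricted to the preimage of the zero-section loci on all factors. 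Geometrically, this open substack parametrizes flags $\Fcal = \Fcal^{(0)} \supset \cdots \supset \Fcal^{(s)} = 0$ of Higgs sheaves with $\overline{\Fcal^{(k-1)}/\Fcal^{(k)}} = \gamma_k$ and zero induced Higgs field on every subquotient. The last condition forces $\theta(\Fcal^{(k-1)}) \subset \Fcal^{(k)} \otimes \omega_X$, whence the canonical filtration of $(\Fcal,\theta)$ satisfies $\Fcal_k \subset \Fcal^{(k)}$ for all $k$; comparing classes through Formula~\eqref{eq:kernelk} then forces the Jordan type of $(\Fcal,\theta)$ to lie $\preceq\ual$, so that the image of $M_\ual$ is supported in $H_\ast(\bLam_{\preceq \ual})$.

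It then remains to show that the composition
\begin{align}
H_\ast(\bLam_{(\gamma_1)}) \otimes \cdots \otimes H_\ast(\bLam_{(\gamma_s)}) \xrightarrow{M_\ual} H_\ast(\bLam_{\preceq \ual}) \twoheadrightarrow H_\ast(\bLam_\ual)
\end{align}
is surjective. By Proposition~\ref{prop:MS5.2} we have $H_\ast(\bLam_\ual) \simeq \bigotimes_i H_\ast(\bCoh_{\alpha_i})$, while on the source side $H_\ast(\bLam_{(\gamma_k)}) = H_\ast(\bCoh_{\gamma_k})$. The preimage in the iterated Hecke stack of the open stratum $\bLam_\ual$ is exactly the locus where each induced map $\Fcal^{(k-1)}/\Fcal^{(k)} \to (\Fcal^{(k)}/\Fcal^{(k+1)}) \otimes \omega_X$ is an epimorphism; on that open locus, after tensoring by appropriate powers of $\omega_X$, the correspondence is identified with the open part of the iterated Hecke stack $\widetilde{\bCoh}_{\alpha_1,\ldots,\alpha_s}$ of coherent sheaves with prescribed subquotient classes $(\alpha_1,\ldots,\alpha_s)$. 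Since the structural map $\widetilde{\bCoh}_{\alpha_1,\ldots,\alpha_s} \to \prod_i \bCoh_{\alpha_i}$ is an iterated vector bundle stack morphism, it induces an isomorphism (up to degree shift) on Borel--Moore homology, from which the required surjectivity follows.

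The main obstacle will be the careful identification of this open correspondence with the iterated Hall correspondence for coherent sheaves, keeping track of the twists by $\omega_X$ and of the parabolic structure on local charts, while ensuring that the ``non-generic'' contributions (corresponding to degenerate induced maps) are absorbed into $H_\ast(\bLam_{\prec \ual})$ via the inductive hypothesis. Relative to the quiver analogue \cite[Theorem~B(e)]{art:schiffmannvasserot2017}, this step is substantially simplified by the explicit product description of each stratum $\bLam_\ual$ afforded by Proposition~\ref{prop:MS5.2}; in practice the verification is carried out on the local atlases of Sections~\ref{sec:stackcoherentsheaves}--\ref{sec:statificationHiggs}.
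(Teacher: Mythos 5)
Your skeleton matches the paper's proof: induction on the Jordan type for $\preceq$, reduction via the filtration \eqref{eq:filtration} to producing classes restricting to all of $H_\ast(\bLam_{\ual})$, realization of these as $s$-fold products of classes supported on zero sections, a support estimate placing the product in $\bLam_{\preceq\ual}$, and the uniqueness of a canonical filtration to identify the correspondence over the open stratum. The one genuinely different choice is the filtration: you slice a nilpotent Higgs sheaf by the image filtration $\Fcal_k=\mathsf{Im}\big(\theta^k\otimes\mathsf{id}\big)$, so that $\gamma_k=\big(\sum_{i\geq k}\alpha_i\big)(-(k-1)\ell)$, whereas the paper uses the kernel filtration $\ker(\theta)\subset\ker(\theta^2)\subset\cdots$, whose subquotients are the rows $\sum_{j\geq i}\alpha_j((i-j)\ell)$ of the colored Young diagram \eqref{diag:Young}. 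Both filtrations are canonical, have subquotients with vanishing induced Higgs field, and are the unique flags with these properties in their numerical classes, so your analogues of the paper's observations (a) and (b) are correct.

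The final surjectivity step, however, does not work as written. The map whose surjectivity you need is the composite $\bigotimes_k H_\ast(\bCoh_{\gamma_k})\to H_\ast(\bLam_{\ual})$, which (granting the Gysin-pullback identifications) is pullback along the morphism $\bLam_{\ual}\to\prod_k\bCoh_{\gamma_k}$ recording the subquotients of the canonical filtration, followed by cap product with $[\bLam_{\ual}]$. The fact that $\widetilde{\bCoh}_{\alpha_1,\ldots,\alpha_s}\to\prod_i\bCoh_{\alpha_i}$ is an iterated vector bundle stack morphism only identifies the \emph{target} $H_\ast(\bLam_{\ual})\simeq\bigotimes_iH_\ast(\bCoh_{\alpha_i})$; it says nothing about the map out of $\bigotimes_kH_\ast(\bCoh_{\gamma_k})$, and the subquotient morphism $\bLam_{\ual}\to\prod_k\bCoh_{\gamma_k}$ is \emph{not} a vector bundle stack morphism (its image is a proper locus, namely where consecutive subquotients admit the required surjections), so the asserted conclusion is a non sequitur. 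What is actually needed is that $H^\ast(\bLam_{\ual})$ is generated as a ring by the K\"unneth components of the Chern classes of the tautological subquotient sheaves, which are pulled back from $\prod_k\bCoh_{\gamma_k}$; the paper deduces this from Theorem \ref{T:Heinloth} and Proposition \ref{prop:MS5.2} via the K-theoretic telescoping relation expressing the classes of the rank-$\alpha_j$ kernels in terms of consecutive subquotients twisted by $\omega_X$. In addition, to know that the abstract product (defined through refined Gysin pullbacks on local charts) really computes this geometric restriction, one needs the compatibility of $\Phi^!$ with cap products by Chern classes and the generic smoothness of $q_{\underline{\gamma}}$, with connected fibers of the expected dimension, over an open subset whose preimage covers $p_{\underline{\gamma}}^{-1}(\bLam_{\ual})$ --- you defer this to ``the main obstacle'', but together with the Chern-class generation it is where the substance of the proof lies.
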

\begin{proof} 
Let us denote by $\mathbf{B}$ the subalgebra of $\COHA_{\bLam}$ generated by the collection of subspaces $H_\ast(\bLam_{(\alpha)})$ for $\alpha \in (\Z^2)^+$. By definition of $A_\ast^0$, every class in $c \in H_\ast^0(\bLama)$ is supported on a finite number of irreducible components, i.e., there exists a finite subset of Jordan types $I_c \subset J_{\alpha}$ such that 
\begin{align}
c \in \mathsf{Im}\bigg( H_\ast\Big(\bigsqcup_{\ual \in I_c} \bLam_{\ual}\Big) \to H^0_\ast(\bLama)\bigg) \ .
\end{align}
Recall the partial order $\prec$ on $J_{\alpha}$ (see \eqref{eq:preceq} and Proposition \ref{prop:preceq}) as well as the induced filtration \eqref{eq:filtration} on $H^0_\ast(\bLama)$. We will prove by induction on $\ual$ with respect to $\prec$ that 
\begin{align}\label{eq:proofgen1}
\mathsf{Im}\big( H_\ast(\bLam_{\preceq \ual}) \to H^0_\ast(\bLama)\big) \subset \mathbf{B}\ .
\end{align}
So let us fix $\ual \in J_{\alpha}$ and assume that \eqref{eq:proofgen1} holds for all $\underline{\beta}$ with $\underline{\beta} \prec \ual$. If $\ual=(\alpha)$ then \eqref{eq:proofgen1} holds by definition. Otherwise, let us write $\ual=(\alpha_1, \ldots, \alpha_s)$ and put $\gamma_i\coloneqq\sum_{j \geq i} \alpha_i((i-j)\deg(\omega_X))$; this is the total class of the $i$th row of the colored Young diagram associated with $\ual$, see \eqref{diag:Young}. Consider the (iterated) convolution diagram for Higgs stacks
\begin{align}\label{eq:proofgen2}
\begin{aligned}
  \begin{tikzpicture}[xscale=4,yscale=-1]
  \node (A0_0) at (0, 0) {$\prod_i \bHiggs_{\gamma_i}$};
\node (A1_0) at (1, 0) {$\widetilde{\bHiggs}_{\gamma_1, \ldots, \gamma_s}$};
\node (A2_0) at (1.8, 0) {$\bHiggs_\alpha$};
    \path (A1_0) edge [->]node [above] {$\scriptstyle{q_{\underline{\gamma}}}$} (A0_0);
   \path (A1_0) edge [->]node [above] {$\scriptstyle{p_{\underline{\gamma}}}$} (A2_0);
  \end{tikzpicture}
\end{aligned}
\end{align}

Using \eqref{eq:eulerformhiggs} and Theore~\ref{T:StackHiggs} (a) we compute
\begin{align}\label{E:proofgen3}
\dim(q_{\underline{\gamma}})=\dim(\widetilde{\bHiggs}_{\gamma_1, \ldots, \gamma_s})-\sum_i \dim( \bHiggs_{\gamma_i})=-2\sum_{i \neq j} \langle \gamma_i,\gamma_j\rangle\ .
\end{align}
We will use the following three observations:
\begin{enumerate}\itemsep0.2em
\item[(a)] $p_{\underline{\gamma}} \circ q_{\underline{\gamma}}^{-1}(\prod_i \bLam_{(\gamma_i)}) \subseteq \bLam_{\preceq \ual}$,
\item[(b)] $p_{\underline{\gamma}}\colon p_{\underline{\gamma}}^{-1}(\bLam_{\ual}) \to \bLam_{\ual}$ is an isomorphism,
\item[(c)] there exists an open subset $\Uscr$ of $\prod_i \bLam_{(\gamma_i)}$ over which $q_{\underline{\gamma}}$ is smooth with connected fibers of dimension $-\sum_{i\neq j}\langle \gamma_i,\gamma_j\rangle$ and $q_{\underline{\gamma}}^{-1}(\Uscr) \underset{\widetilde{\bHiggs}_{\gamma_1, \ldots, \gamma_s}}{\times}  p_{\underline{\gamma}}^{-1}(\bLam_{\ual})$ is open in $ p_{\underline{\gamma}}^{-1}(\bLam_{\ual})$.
\end{enumerate}
Statement (a) is easy, while (b) comes from the unicity of the iterated kernel filtration $\ker(\theta) \subseteq \ker(\theta^2) \subseteq \cdots \subseteq \Fcal$ for any $\underline{\Fcal} \in \bLam_{\ual}$. Statement (c) is proved as \cite[Proposition~1.6, Theorem~1.4]{art:bozec2016}, see also \cite[Lemma~3.19]{art:schiffmannvasserot2017}. Note that
\begin{align}
\dim(\bLam_{(\alpha)})-\sum_i \dim(\bLam_{(\gamma_i)})=-\sum_{i \neq j} \langle \gamma_i,\gamma_j\rangle\ .
\end{align}
From (a), (b) and (c), using the local construction of the multiplication map, the base change property of refined Gysin pullbacks \cite[Theorem 6.2 (b)]{book:fulton1998} we deduce that
\begin{align}\label{eq:proofgen4}
\Big( [\bLam_{(\gamma_s)}] \star[\bLam_{(\gamma_{s-1})}] \star \cdots \star [\bLam_{(\gamma_1)}]\Big)\vert_{\bLam_{\ual}}=[\bLam_{\ual}]
\end{align}
while $\mathsf{supp}\Big( [\bLam_{(\gamma_s)}] \star [\bLam_{(\gamma_{s-1})}] \star \cdots \star [\bLam_{(\gamma_1)}]\Big) \subseteq \bLam_{\preceq \ual}$. More generally, from the compatibility of refined Gysin morphisms with respect to cap product with Chern classes \cite[Proposition~6.3]{book:fulton1998}, we deduce that for any polynomials $P_1, \ldots, P_s$ in the (K\"unneth components of the) Chern classes of the tautological sheaves $\Efrak_{\gamma_1}, \ldots, \Efrak_{\gamma_s}$ on $\bLam_{(\gamma_1)} \simeq \bCoh_{\gamma_1}, \ldots, \bLam_{(\gamma_s)} \simeq \bCoh_{\gamma_s}$ respectively, we have
\begin{multline}\label{eq:proofgen5}
\Big(P_s(c_{i,\pi}(\Efrak_{\gamma_s}) \cap [\bLam_{(\gamma_s)}])\Big) \star\cdots \star \Big(P_1(c_{i,\pi}(\mathcal{E}_{\gamma_1}) \cap [\bLam_{(\gamma_1)}])\Big)\vert_{\bLam_{\ual}}\\
=\Big(P_1(c_{i,\pi}(\Efrak_1)) \cdots P_s(c_{i,\pi}(\Efrak_{s})\Big) \cap [\bLam_{\ual}]\ ,
\end{multline}
where $\Efrak_1, \ldots, \Efrak_s$ are the tautological sheaves over $\bLam_{\ual} \times X$ defined as $\Efrak_i = \ker(\theta^i)/\ker(\theta^{i-1})$, where $\theta$ is the Higgs field on the universal sheaf $\Efrak_\alpha$. We claim that $H^\ast(\bLam_{\ual})$ is generated by the Chern classes $c_{i,\pi}(\Efrak_j)$ for $j=1, \ldots, s$. Indeed, by Proposition \ref{prop:MS5.2}, $H^\ast(\bLam_{\ual})$ is generated by the Chern classes $c_{i,\pi}(\Efrak_{\alpha_j})$ for $j=1, \ldots, s$, where $[\Efrak_{\alpha_j}]=[\Efrak_{i+1}\otimes \omega_X]-[\Efrak_i]$ in $K_0(\bLam_{\ual})$. Since the K\"unneth components $c_{i,\pi}(\Efrak_{i+1}\otimes \omega_X)$ obviously generate the same algebra as the K\"unneth components $c_{i,\pi}(\Efrak_{i+1})$, the claim follows.
From all this, we deduce that
\begin{align}
H_\ast(\bLam_{\ual}) \subseteq \mathsf{gr}\big( H_\ast(\bLam_{(\gamma_s)}) \star \cdots \star  H_\ast(\bLam_{(\gamma_1)})\big) \subseteq \bigoplus_{\underline{\beta} \preceq \ual} H_\ast(\bLam_{\underline{\beta}})\ .
\end{align}
Using the induction hypothesis \eqref{eq:proofgen1} we get that
\begin{align}
\mathsf{Im}\big( H_\ast(\bLam_{\preceq \ual}) \to H^0_\ast(\bLama)\big) \subset \mathbf{B}
\end{align}
as wanted. Theorem \ref{T:gen} is proved.
\end{proof}

\begin{corollary}\label{cor:gen1} 
For $A=H_\ast$, the algebra $\COHA^T_{\Higgs(X)}\otimes \Q(t)$ is generated over $\Q(t)$ by the collection of subspaces $H_\ast(\bLam_{(\alpha)})$ for $\alpha \in (\Z^2)^+$.
\end{corollary}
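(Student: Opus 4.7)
The plan is to combine the $T$-equivariant version of Theorem~\ref{T:gen} with the localization isomorphism of Proposition~\ref{prop:localizationT}.

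First, I would verify that Theorem~\ref{T:gen} extends to the $T$-equivariant setting: the algebra $\COHA^T_{\bLam}$ is generated as an $H^\ast_T(\pt)$-algebra by the subspaces $H^T_\ast(\bLam_{(\alpha)})$ for $\alpha \in (\Z^2)^+$. The proof of Theorem~\ref{T:gen} should carry over essentially verbatim: the Jordan type stratification $\bLama=\bigsqcup_{\ual} \bLam_{\ual}$ together with all morphisms appearing in the convolution diagram \eqref{eq:proofgen2} are $T$-equivariant (since $T$ acts trivially on $\bCoh$ and preserves the iterated kernel filtration of a nilpotent Higgs field), the induction on the partial order $\preceq$ is purely formal, and the key support/generation computations \eqref{eq:proofgen4} and \eqref{eq:proofgen5} only invoke compatibility of refined Gysin pullbacks with proper pushforwards, open restrictions and cap products by (equivariant) Chern classes, which lift to $T$-equivariant Borel-Moore homology by \cite[Appendix~A]{art:minets2018}.

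Next, by the $T$-equivariant analog of Corollary~\ref{cor:Higgsvariantnilp}, the proper pushforward $\iota_\ast\colon\COHA^T_{\bLam} \to \COHA^T_{\Higgs(X)}$ associated with the closed embedding $\iota\colon\bLam \hookrightarrow \bHiggs$ is an algebra homomorphism. After tensoring with $\Q(t)=\Frac(H^\ast_T(\pt))$, Proposition~\ref{prop:localizationT} upgrades this to an isomorphism of $\Q(t)$-algebras
\begin{align}
\iota_\ast\otimes\mathsf{id}_{\Q(t)}\colon \COHA^T_{\bLam}\otimes_{\Q[t]}\Q(t) \stackrel{\sim}{\longrightarrow} \COHA^T_{\Higgs(X)}\otimes_{\Q[t]}\Q(t).
\end{align}
Combining with Step~1, the target is generated over $\Q(t)$ by the images $\iota_\ast(H^T_\ast(\bLam_{(\alpha)}))$. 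Finally, since $T$ acts trivially on the zero section $\bLam_{(\alpha)}\simeq \bCoha$, there is a canonical identification $H^T_\ast(\bLam_{(\alpha)})=H_\ast(\bLam_{(\alpha)})\otimes_\Q \Q[t]$, so the $\Q(t)$-span of $\iota_\ast(H^T_\ast(\bLam_{(\alpha)}))$ coincides with the $\Q(t)$-span of $\iota_\ast(H_\ast(\bLam_{(\alpha)}))$. This gives the corollary.

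The main obstacle is the $T$-equivariant extension of Theorem~\ref{T:gen} carried out in Step~1. Concretely, one must check that the identity \eqref{eq:proofgen4} and its refinement \eqref{eq:proofgen5} remain valid in $T$-equivariant Borel-Moore homology, and that the generation of $H^\ast(\bLam_{\ual})$ by tautological Chern classes (via Proposition~\ref{prop:MS5.2} and the identification $[\Efrak_{\alpha_j}]=[\Efrak_{j+1}\otimes\omega_X]-[\Efrak_j]$) upgrades to generation of $H^\ast_T(\bLam_{\ual})$ over $\Q[t]$ by the equivariant lifts of these Chern classes. Both reduce to routine checks given the trivial $T$-action on $\bCoh$ and the $T$-equivariance of Proposition~\ref{prop:MS5.2}'s iterated vector-bundle-stack morphism; no genuinely new geometric input beyond what already underlies Theorems~\ref{T:torsionfree} and~\ref{T:gen} is required.
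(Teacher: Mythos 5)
Your proposal is correct and follows exactly the route the paper intends (the paper states this corollary without proof, as an immediate consequence of the $T$-equivariant form of Theorem~\ref{T:gen} — whose proof, as you note, carries over verbatim since all strata, convolution diagrams and tautological classes are $T$-equivariant and $T$ acts trivially on $\bCoh$ — combined with the localization isomorphism of Proposition~\ref{prop:localizationT}). Your additional observation that $H^T_\ast(\bLam_{(\alpha)})=H_\ast(\bLam_{(\alpha)})\otimes_\Q\Q[t]$, so that the non-equivariant subspaces already suffice to generate over $\Q(t)$, is exactly the point needed to match the statement as written.
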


\begin{corollary}\label{cor:gen2} 
For $A=H_\ast$, $\COHA_{\bLam}$ is generated as an $\HH$-module algebra by the collection of elements $[\bLam_{(\alpha)}]$ for $\alpha \in (\Z^2)^+$.
\end{corollary}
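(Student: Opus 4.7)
The plan is to combine Theorem \ref{T:gen} with a direct calculation of the $\HH$-action on the single class $[\bLam_{(\alpha)}]$. Theorem \ref{T:gen} already gives that the subspaces $H_\ast(\bLam_{(\alpha)})$ generate $\COHA_{\bLam}$ as an algebra, so it is enough to prove that for each $\alpha \in (\Z^2)^+$ the whole subspace $H_\ast(\bLam_{(\alpha)})$ is contained in the $\HH$-orbit $\HH\cdot [\bLam_{(\alpha)}]$.

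To compute this orbit, I will exploit that $\bLam_{(\alpha)}$ is the zero section of $r_\alpha\colon \bHiggsa \to \bCoha$, so that under the canonical identification $\bLam_{(\alpha)} \simeq \bCoha$ the composition
\begin{align}
\bLam_{(\alpha)} \hookrightarrow \bLama \hookrightarrow \bHiggsa \xrightarrow{r_\alpha} \bCoha
\end{align}
becomes the identity. The projection formula then identifies the action of $h \in \HH$ on $[\bLam_{(\alpha)}]$, viewed inside $H_\ast(\bLama)$ via proper pushforward along the closed embedding of the zero section, with the cap product $\pi_\alpha(h) \cap [\bCoha] \in H_\ast(\bCoha) = H_\ast(\bLam_{(\alpha)})$, where $\pi_\alpha\colon \HH \to H^\ast(\bCoha)$ is the universal tautological-class map from Section \ref{sec:torsionfreeness}.

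It then remains to note that $h \mapsto \pi_\alpha(h) \cap [\bCoha]$ is surjective from $\HH$ onto $H_\ast(\bLam_{(\alpha)})$. By Theorem \ref{T:Heinloth}, $\pi_\alpha$ is an isomorphism when $\rk(\alpha)>0$, and in the rank-zero case the only generator omitted from $\HH$ is $c_{1,\varpi}$, which is sent by $\pi_\alpha$ to the scalar $\deg(\alpha)\cdot 1$; thus the image of $\pi_\alpha$ still contains all tautological classes, and by Theorem \ref{T:Heinloth}(a) it equals $H^\ast(\bCoha)$. Poincar\'e duality for the smooth stack $\bCoha$, recalled at the end of Section \ref{sec:stackcoherentsheaves}, then upgrades surjectivity of $\pi_\alpha$ to surjectivity of $h \mapsto \pi_\alpha(h)\cap [\bCoha]$ onto $H_\ast(\bCoha)$. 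Admissibility is automatic since $\bLam_{(\alpha)}$ is a single closed irreducible substack, so the resulting classes lie in $\COHA_{\bLam}$. I do not anticipate any substantial obstacle here; the only care needed is the bookkeeping around the identification $\bLam_{(\alpha)} \simeq \bCoha$ and the projection-formula passage between $H_\ast(\bLam_{(\alpha)})$ and its image in $H_\ast(\bLama)$.
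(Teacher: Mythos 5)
Your proposal is correct and follows essentially the same route as the paper: the paper's proof also consists of combining Theorem \ref{T:gen} with the observation that, by Poincar\'e duality for $\bCoha$, one has $\HH \cdot [\bLam_{(\alpha)}]=H_\ast(\bLam_{(\alpha)})$. You merely spell out the details the paper leaves implicit (the projection formula along the zero section and the surjectivity of $\pi_\alpha$, including the rank-zero case), and these details are accurate.
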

\begin{proof} 
It suffices to observe that, by Poincar\'e duality for the stack $\bCoha$,  $\HH \cdot [\bLam_{(\alpha)}]=H_\ast(\bLam_{(\alpha)})$.
\end{proof}

\bigskip

\end{document}